\def\wtd{\widetilde}
\DeclareMathOperator*{\argmin}{argmin}
\DeclareMathOperator*{\argmax}{argmax}
\DeclareMathOperator{\diag}{diag}
\DeclareMathOperator{\HH}{H}
\DeclareMathOperator{\T}{T}
\def\ba{\pmb{a}}
\def\bb{\pmb{b}}
\def\bc{\pmb{c}}
\def\be{\pmb{e}}
\def\Bf{\pmb{f}}
\def\bg{\pmb{g}}
\def\bp{\pmb{p}}
\def\bq{\pmb{q}}
\def\bs{\pmb{s}}
\def\bt{\pmb{t}}
\def\bu{\pmb{u}}
\def\bv{\pmb{v}}
\def\bw{\pmb{w}}
\def\bx{\pmb{x}}
\def\by{\pmb{y}}
\def\bz{\pmb{z}}
\newtheorem{proposition}{Proposition}[section]
\newtheorem{theorem}{Theorem}[section]
\newtheorem{lemma}{Lemma}[section]
\newtheorem{corollary}{Corollary}[section]
\theoremstyle{definition}
\newtheorem{remark}{Remark}[section]
\newtheorem{example}{Example}[section]
\numberwithin{equation}{section}
\numberwithin{figure}{section}
\numberwithin{table}{section}
\def\wtd{\widetilde}
\def\ba{\pmb{a}}
\def\bb{\pmb{b}}
\def\bc{\pmb{c}}
\def\be{\pmb{e}}
\def\Bf{\pmb{f}}
\def\bg{\pmb{g}}
\def\bp{\pmb{p}}
\def\bq{\pmb{q}}
\def\bs{\pmb{s}}
\def\bt{\pmb{t}}
\def\bu{\pmb{u}}
\def\bv{\pmb{v}}
\def\bw{\pmb{w}}
\def\bx{\pmb{x}}
\def\by{\pmb{y}}
\def\bz{\pmb{z}}
\def\diag{{\rm diag}}
\def\scrR{\mathscr{R}}
\def\wtd{\widetilde}
\def\bbC{\mathbb{C}}
\def\bbP{\mathbb{P}}
\def\bbR{\mathbb{R}}
\renewcommand{\algorithmicrequire}{\textbf{Input:}}
\renewcommand{\algorithmicensure}{\textbf{Output:}}
\numberwithin{equation}{section}
\numberwithin{figure}{section}
\numberwithin{table}{section}
\title{A convergence analysis of Lawson's iteration for computing polynomial and rational minimax approximations}
\author{Lei-Hong Zhang\thanks{Corresponding author. School of Mathematical Sciences, Soochow University, Suzhou 215006, Jiangsu, China. This work was
 supported in part by the National Natural Science Foundation of China NSFC-12071332, NSFC-12371380,    Jiangsu Shuangchuang Project (JSSCTD202209), Academic Degree and Postgraduate Education Reform Project of Jiangsu Province, and China Association of Higher Education under grant 23SX0403.
        Email: {\tt longzlh@suda.edu.cn}.} \and Shanheng Han\thanks{School of Mathematical Sciences, Soochow University, Suzhou 215006, Jiangsu, China. Email: {\tt 3468805603@qq.com}.}
        }
 \date{ }
\begin{document}

\maketitle

\begin{abstract}
Lawson's iteration is a classical and effective method for solving the linear (polynomial) minimax approximation {problem} in the complex plane. Extension of Lawson's iteration for the rational minimax approximation {problem} with both computationally high efficiency and theoretical guarantee is challenging. A recent work  [{L.-H. Zhang, L. Yang, W. H. Yang and Y.-N. Zhang, A convex dual problem for the rational minimax approximation and Lawson's iteration,   {\it Math. Comp.},  {94(2025), 2457–2494.}}] reveals that Lawson's iteration can be viewed as a method for solving the dual problem of the original rational minimax approximation {problem}, and a new type of Lawson's iteration, namely, {\tt d-Lawson}, was proposed, which   reduces to the classical Lawson's iteration for the linear  minimax approximation {problem}. For the rational case, such a dual problem is guaranteed to obtain the original minimax solution under Ruttan's sufficient condition, and numerically, {\tt d-Lawson} was observed to converge monotonically with respect to the dual objective function. In this paper, we present a theoretical convergence analysis of {\tt d-Lawson} for both the linear and rational minimax approximation {problems}. In particular, we show that
\begin{itemize}
\item[ (i)] for the linear minimax approximation {problem}, $\beta=1$ is a near-optimal Lawson exponent  in Lawson's iteration, and 
\item[(ii)] for the rational minimax approximation {problem, under certain conditions,} {\tt d-Lawson} converges monotonically with respect to the dual objective function for any sufficiently small $\beta>0$, and the limiting approximant satisfies the complementary slackness condition: any node associated with positive weight either is an interpolation point or has a constant error. 
\end{itemize}
\end{abstract}


\medskip
{\small
{\bf Key words. Rational minimax approximation, Lawson algorithm, Ruttan's optimality condition, Dual programming, Convergence analysis}    
\medskip

{\bf AMS subject classifications. 41A50, 41A20, 65D15,  90C46}
}

 
\section{Introduction}\label{sec_intro}
Computing the polynomial and/or rational minimax (also known as Chebyshev or best) approximations of a continuous complex-valued function $f$ over a given compact set $\Omega$ in the complex plane $\bbC$ is a classical problem in approximation theory \cite{tref:2019a}. In practical applications, rational approximations are useful in various areas, including function approximations \cite{drnt:2024,tref:2023},   computing conformal mappings \cite{gotr:2019a,heht:2023,tref:2020}, solving partial differential equations \cite{brtr:2022,cotr:2023,gotr:2019b,gotr:2019,natr:2021,tref:2018,tref:2024a,xuwt:2024}, model order reduction \cite{anbg:2020,dajm:2023,gogu:2021}, and signal processing  \cite{adsr:1997,depp:2023,gass:2002,gass:2004,hoch:2017,trms:2007,vaen:2021,widt:2022}; see \cite{nast:2023} for some recent applications. 
In many of  these applications,  only discretized {data samples} are available; even for a continuum domain  $\Omega$ enclosed by a simple Jordan curve in which $f$ is analytic, by the maximum modulus  principle, we can first sample $f$ on the boundary of $\Omega$, and then compute the rational/polynomial minimax approximant of $f$ through  solving a discrete rational/polynomial minimax problem. For these cases, denote by $\{(x_j,f_j)\}_{j=1}^m$ the sampled data from  $f_j=f(x_j)\in \bbC$ ($x_j\in \Omega$) over distinct nodes  ${\cal X}=\{x_j\}_{j=1}^m$, and denote by $\bbP_n$ the set of complex polynomials with degree less than or equal to $n$. We consider the following discrete  rational  approximation problem
\begin{equation}\label{eq:bestf0}
\inf_{\xi=p/q\in\scrR_{(n_1,n_2)}}\|\Bf-\xi(\bx)\|_{\infty},
\end{equation}
where $\scrR_{(n_1,n_2)}:=\{{p}/{q}|p\in \bbP_{n_1},~0\not\equiv q\in\bbP_{n_2} \}$, 
$\Bf=[f_1,\dots,f_m]^{\T}\in\bbC^m ~(n_1+n_2+2\le m)$,  $\bx=[x_1,\dots,x_m]^{\T}\in\bbC^m$, $\xi(\bx)=[\xi(x_1),\dots,\xi(x_m)]^{\T}\in\bbC^m$, and
\begin{equation}\nonumber 
\|\Bf-\xi(\bx)\|_{\infty}:=\max_{1\le j\le m}\left|f_j-\frac{p(x_j)}{q(x_j)}\right|.
\end{equation}
In case when the infimum of \eqref{eq:bestf0} is attainable,  we call the function $\xi^*={p^*}/{q^*}\in \scrR_{(n_1,n_2)}$  from 
\begin{equation}\label{eq:bestf}
{p^*}/{q^*} \in \arg\min_{\xi=p/q\in\scrR_{(n_1,n_2)}}\|\Bf-\xi(\bx)\|_{\infty},
\end{equation}
the rational minimax approximant  \cite{tref:2019a} of  $f(x)$ over ${\cal X}$.

In general, computing the discrete rational minimax approximation is much more challenging than the polynomial (i.e., $n_2=0$)  minimax problem. Indeed, for the polynomial case, it is guaranteed that there is a unique minimax approximant which can be characterized by a necessary and sufficient condition (e.g., \cite{rish:1961} and \cite[Theorem 2.1]{will:1972});  for the rational case, the infimum of \eqref{eq:bestf0} may not be achievable, and {even if it is}, there may be multiple minimax approximants \cite{natr:2020,tref:2019a}. Furthermore, local best approximants may exist \cite{natr:2020,tref:2019a}.  Necessary conditions for the rational minimax approximant have been developed in, e.g.,  \cite{elli:1978,gutk:1983,rutt:1985,sava:1977,this:1993,will:1972,will:1979,wulb:1980}, and Ruttan  \cite[{Theorem} 2.1]{rutt:1985} contributes a sufficient condition.

Lawson's iteration \cite{laws:1961}  is a classical and effective method for computing the discrete linear (polynomial) minimax approximant.  The idea of Lawson is to approximate the minimax polynomial $p^*$ (i.e., $n_2=0$ in \eqref{eq:bestf}) by a sequence of polynomials $\{p^{(k)}\}$, each as the solution of the weighted least-squares problem:
$$
p^{(k)}=\argmin_{p\in \bbP_n}\sum_{j=1}^mw_j^{(k)}|f_j-p(x_j)|^2,
$$
where $\bw^{(k)}=[w_1^{(k)},\dots,w_m^{(k)}]^{\T}\in {\cal S}$ is the weight vector at the $k$th iteration in the  probability simplex:
\begin{equation}\nonumber 
{\cal S}:=\{\bw=[w_1,\dots,w_m]^{\T}\in \bbR^m: \bw\ge 0 ~{\rm and } ~\bw^{\T}\be=1\},~~\be=[1,1,\dots,1]^{\T}.
\end{equation}
At the $k$th step in  Lawson's iteration, it  updates the element of the weight vector as 
\begin{equation}\label{eq:lawsonstep}
w_j^{(k+1)}=\frac{w_j^{(k)}\left|f_j-p^{(k)}(x_j)\right|^{\beta}}{\sum_{i}w_i^{(k)}\left|f_i-p^{(k)}(x_i)\right|^{\beta}},~~\forall j,
\end{equation} 
where $\beta>0$ is the so-called {\it Lawson exponent} and is originally set as $\beta=1$ (see e.g., \cite{clin:1972,laws:1961}). Due to its relation with weighted least-squares problems, Lawson's iteration is an iteratively reweighted least-squares (IRLS) iteration. 
Convergence analysis and some variants have been discussed (e.g., \cite{bama:1970,clin:1972,coop:2007,hoka:2020,loeb:1957,sako:1963,yazz:2023}).

However, extension of Lawson's iteration for the rational minimax approximation problem with both computationally high efficiency and theoretical guarantee is nontrivial and challenging. Particularly, for computing the minimax rational {solution} $\xi^*=p^*/q^*$ of \eqref{eq:bestf}, two difficulties related with a basic Lawson's step  \eqref{eq:lawsonstep} are: how to define a suitable approximation $\xi^{(k)}=p^{(k)}/q^{(k)}$ associated with the current weight vector $\bw^{(k)}$, and how to choose a suitable Lawson exponent $\beta$ for convergence?

Some versions of Lawson's iteration have been discussed for the rational minimax approximation problem \eqref{eq:bestf0}. For example, the Loeb algorithm  \cite{loeb:1957} (the same method was also proposed in \cite{sako:1963} known as the SK iteration by Sanathanan and  Koerner) uses the reciprocal of the current denominator $q^{(k)}(x_j)$ as weights and compute the approximation $\xi^{(k)}=p^{(k)}/q^{(k)}$ from a weighted linearization associated with the current weights. 
A recent work \cite{hoka:2020} further improves the basic SK iteration by proposing a stabilized SK iteration.  
Another remarkable work {in the rational approximation literature} is the adaptive Antoulas-Anderson (AAA) {algorithm} \cite{nase:2018} and its extension, {the AAA-Lawson algorithm} \cite{fint:2018,natr:2020,nast:2023}. AAA represents the rational approximation in barycentric form and selects the associated support points iteratively in an adaptive way for stability; in AAA-Lawson \cite{fint:2018,natr:2020,nast:2023}, the algorithm further introduces weights updated according to \cite[Equ. (8.5)]{fint:2018}
\begin{equation}\label{eq:lawsonstep-aaalawson}
w_j^{(k+1)}=\frac{w_j^{(k)}\left|f_j-\xi_{\tiny \rm AAA-Lawson}^{(k)}(x_j)\right|^{\beta}}{\sum_{i}w_i^{(k)}\left|f_i-\xi_{\tiny \rm AAA-Lawson}^{(k)}(x_i)\right|^{\beta}},~~\forall j,
\end{equation} 
where $\xi_{\tiny \rm AAA-Lawson}^{(k)}$ is from a weighted linearization \cite[Equ. (8.4)]{fint:2018} associated with the current weight vector $\bw^{(k)}$ (see also \eqref{eq:AAA-Lawson}). Other versions of Lawson's iteration can be found in \cite{bama:1970,coop:2007}. However, 
to our best knowledge, no convergence guarantee has been established for these versions of Lawson's iteration in theory, and in some cases, the computed rational approximants can be local best or near-best \cite{fint:2018,natr:2020}.  Indeed, as remarked for AAA-Lawson in \cite{fint:2018} that {\it ``its convergence is far from understood, and even when it does converge, the rate is slow (linear at best)''}, and {\it ``convergence analysis appears to be highly nontrivial"}. 

In this paper, we shall establish the convergence of a version of Lawson's iteration (Algorithm \ref{alg:Lawson}), namely, {\tt d-Lawson}, proposed recently in \cite{zhyy:2023}. In the same Lawson's updating fashion for the weights $w_j^{(k+1)}$ given in \eqref{eq:lawsonstep-aaalawson}, the main difference between  {\tt d-Lawson} and AAA-Lawson lies in defining the associated approximant $\xi^{(k)}=p^{(k)}/q^{(k)}$, and we shall  clarify this difference in detail in section \ref{subsec:convg-framework}.  For {\tt d-Lawson}, \cite{zhyy:2023} reveals that it  can be viewed as a method for solving the dual problem 
$
\max_{\bw\in {\cal S}} d_2(\bw)
$
of the original rational minimax approximation problem. The weight $w_j$ is the corresponding dual variable associated with the node $x_j$.  Such a dual problem is guaranteed to obtain the original minimax solution $\xi^*$ under Ruttan's sufficient condition  (\cite[{Theorem} 2.1]{rutt:1985};  see also \cite[Theorem 2]{isth:1993} and \cite[Theorem 3]{this:1993});  moreover, numerically, {\tt d-Lawson} was observed to converge monotonically \cite{zhyy:2023} with respect to the dual objective function $d_2(\bw)$. The framework for handling the rational minimax approximation problem \eqref{eq:bestf0}  in  \cite{zhyy:2023} can be well described by Figure \ref{fig:framework} (see \cite[Figure 1]{zhyy:2023}). It should be pointed out that {\tt d-Lawson} reduces to the classical Lawson's iteration \cite{laws:1961} for the linear minimax approximation problem if $n_2=0$.   

\begin{figure}[htb!!!]  
 \tikzstyle{P} = [rectangle, rounded corners, minimum width=1cm, minimum height=1cm, text centered, text width=3.6cm, draw=black,  fill=red!30, drop shadow]
\tikzstyle{D} = [rectangle, rounded corners, minimum width=2cm, minimum height=0cm, text centered, text width=3.42cm, draw=black, fill=blue!30, drop shadow] 
\tikzstyle{M} = [rectangle, rounded corners, minimum width=2cm, minimum height=0cm, text centered, text width=3.42cm, draw=black, fill=white!, drop shadow] 

\tikzstyle{arrow} = [ -->, >=stealth]
\tikzstyle{linepart} = [draw, thick, color=black!50, -latex', dashed] 
\tikzstyle{line} = [draw,  thick,    color=red!,  -latex']
 
\newcommand{\Primal}[2]{node (p#1) [P] {#2}}
\newcommand{\Dual}[2]{node (p#1) [D] {#2}}
    \newcommand{\Method}[2]{node (p#1) [M] {#2}}
\begin{tikzpicture}[node distance=1cm,x=0.675cm,y=0.6cm]\nonumber
\path \Primal{2}{{\small Primal problem: \vskip -7mm$${  (Chebyshev ~appr.)~} $$ \vskip -10mm $${\rm min-max} $$}}; 
\path (p2.east)+(7.5,-6.0) \Dual{3}{{\small Dual problem ${\rm (max-min)}$: \vskip -7mm $$\max_{\bw\in {\cal S}}d_2(\bw)$$}};
  \path (p3.east)+(0,6) \Method{5}{{\small \vskip -8mm$${\rm Lawson's~ iteration}$$ }};  
  \draw [line,red] (p5.south)  -- node [above=1pt, sloped] {solve the dual} (p3);
\draw [line,red] (p2.east)  -- node [above=5pt, sloped] {weak duality $\checkmark$} (p3);
\draw [line,red] (p2.east)   -- node [below=-1pt, sloped] {strong duality under} (p3);
\draw [line,red] (p2.east)  -- node [below=9pt, sloped] {Ruttan's condition} (p3);              
\draw (2.5,-1 ) to [bend left=-30pt,  below=0pt,draw=graphicbackground,sloped] node [midway] {\small Lagrange duality on a linearization} (7.7,-6 );
\end{tikzpicture}
\caption{Framework \cite[Figure 1]{zhyy:2023} of the dual programming and Lawson's iteration for the rational minimax approximation of \eqref{eq:bestf}.}
\label{fig:framework}
 \end{figure}
For this version {\tt d-Lawson} of Lawson's iteration  (Algorithm \ref{alg:Lawson}),  we shall contribute theoretical convergence analysis  for both the linear and rational minimax approximation problems. In particular, we show that
\begin{itemize}
\item[ (i)] for the linear minimax approximation problem,  $\beta=1$ is a near-optimal Lawson exponent in Lawson's iteration \eqref{eq:lawsonstep}, and 
\item[(ii)] for the rational minimax approximation {problem, under certain conditions}, {\tt d-Lawson} converges monotonically with respect to the dual objective function $d_2(\bw)$ for any sufficiently small $\beta>0$ in \eqref{eq:lawsonstep-aaalawson}; moreover, the convergent pair $(\bw,\xi)$ {satisfies} the following complementary slackness {conditions}: 
\begin{equation}\label{eq:complement0}
w_j|f_j-\xi(x_j)|\left(|f_j-\xi(x_j)|-c\right)=0, \quad\forall j=1,2,\dots,m,
\end{equation}
for some constant $c$. 
The complementary slackness means that any node associated with positive weight either is an interpolation point or has a constant error.
\end{itemize}
These theoretical findings lay a solid ground for {\tt d-Lawson} (Algorithm \ref{alg:Lawson}).

We organize the paper as follows. In section \ref{sec_dual}, we first introduce the dual problem associated with a linearization of the rational minimax approximation problem  \eqref{eq:bestf0} {using} Lagrange duality \cite{zhyy:2023}. These results are mainly from \cite{zhyy:2023}. {We shall introduce the concept of strong duality and explore its connection with Ruttan's sufficient condition}; {an optimality} condition for computing the dual objective function and {\tt d-Lawson} \cite{zhyy:2023} (Algorithm \ref{alg:Lawson})  for solving  the dual problem will be presented; furthermore, a roadmap of our convergence analysis and a comparison between AAA-Lawson and {\tt d-Lawson} are given in this section. In section \ref{sec:cong_Lawson}, we shall establish an important lower bound for the dual objective function value $d_2(\bw^{(k+1)})$ based on the information at the $k$th iteration.  Relying on this lower bound, we prove in section \ref{sec:beta_Lawson} that for the linear minimax approximation problem, there is a $\beta_0>1$ so that for any $\beta\in (0,\beta_0)$, Lawson's iteration converges monotonically, and $\beta=1$ is the near-optimal Lawson exponent. In section \ref{sec:beta_Lawson_rational},  we will use the lower bound to show  that monotonic convergence of {\tt d-Lawson} occurs  generically for any sufficiently small $\beta>0$, and furthermore, we shall prove the complementary slackness whenever $d_2(\bw^{(k+1)})=d_2(\bw^{(k)})$. Finally concluding remarks  are drawn in section \ref{sec_conclude}.

{\bf Notation}.
We follow the notation in \cite{zhyy:2023} in this paper. The imaginary unit is ${\tt i}=\sqrt{-1}$, and for  $\mu= \mu^{\tt r}+ {\tt i} \mu^{\tt i}\in \bbC$, we denote its modulus $|\mu|=\sqrt{(\mu^{\tt r})^2+(\mu^{\tt i})^2}$ and its conjugate $\bar\mu= \mu^{\tt r}- {\tt i} \mu^{\tt i}$, where  ${\rm Re }(\mu)=\mu^{\tt r}\in \bbR$ and ${\rm Im }(\mu)=\mu^{\tt i}\in \bbR$  are the real and imaginary part of $\mu$, respectively. Bold lower case letters are used to represent column vectors,  and ${\mathbb C}^{n\times m}$ (resp. ${\mathbb R}^{n\times m}$) stands for the set
of all $n\times m$ complex (resp. real) matrices, with the identity $I_n\equiv [\be_1,\be_2,\dots,\be_n]\in\bbR^{n\times n}$, where $\be_i$ is its $i$th column with $i\in \{1,2,\dots,n\}$. For a vector $\bx\in \bbC^n$,  $\diag(\bx)=\diag(x_1,\dots,x_n)$ is the diagonal matrix, and $\|\bx\|_\alpha=(\sum_{j=1}^n|x_j|^\alpha)^\frac1\alpha$ is the vector $\alpha$-norm ($\alpha\ge 1$) of $\bx$. For  $\bx,\by\in \bbC^n$ with $y_j\ne 0,~1\le j\le n$, we define $\bx./\by=[x_1/y_1,\dots,x_n/y_n]^{\T}.$   
For  a matrix $A\in\bbC^{m\times n}$,  ${\rm span}(A)$ represents the column space of $A$; $A^{\HH}$ (resp. $A^{\T}$)  and $A^{\dag}$ are the conjugate transpose (resp. transpose) and the Moore-Penrose inverse of $A$, respectively. We also adopt MATLAB-like convention to represent the sub-matrix $A({\cal I}_1,{\cal I}_2)$ of $A$, consisting of intersections of rows and columns indexed by ${\cal I}_1\subseteq \{1,2,\dots, m\}$ and ${\cal I}_2\subseteq \{1,2,\dots, n\}$, respectively. 

\section{A dual problem and Lawson's iteration} \label{sec_dual} 
First, let $$\bbP_{n_1}={\rm span}(\psi_0(x),\dots,\psi_{n_1}(x)) ~{\rm and} ~\bbP_{n_2}={\rm span}(\phi_0(x),\dots,\phi_{n_2}(x))$$ be the chosen bases for the numerator and denominator  polynomial spaces, respectively, and we write   $p/q\in \scrR_{(n_1,n_2)}$ as 
\begin{equation} \label{eq:paramt_ab}
\frac{p(x)}{q(x)}=\frac{[\psi_0(x),\dots,\psi_{n_1}(x)] \ba}{[\phi_0(x),\dots,\phi_{n_2}(x)] \bb},~~\mbox{for ~some~} \ba\in \bbC^{n_1+1}, ~\bb\in \bbC^{n_2+1}.
\end{equation}  
For the given  ${\cal X}=\{x_j\}_{j=1}^m$ with $|{\cal X}|=m$,  we have the basis matrix for $p\in \bbP_{n_1}$:
\begin{equation}\nonumber 
\Psi=\Psi(x_1,\dots,x_m;n_1):=\left[\begin{array}{cccc}\psi_0(x_1) & \psi_1(x_1) & \cdots & \psi_{n_1}(x_1) \\\psi_0(x_2) & \psi_1(x_2) & \cdots & \psi_{n_1}(x_2)  \\ \vdots & \cdots & \cdots& \vdots  \\\psi_0(x_m) & \psi_1(x_m) & \cdots & \psi_{n_1}(x_m) \end{array}\right],~\Psi_{i,j}=\psi_{j-1}(x_i),
\end{equation}
and analogously, we have $\Phi=\Phi(x_1,\dots,x_m;n_2)=[\phi_{j-1}(x_i)] \in \bbC^{m\times (n_2+1)}$.

For a given irreducible   $\xi(x)=p(x)/q(x)\in\scrR_{(n_1,n_2)}$, if $|\xi(x)|$ is bounded for any $x\in {\cal X}$, then it is easy to see that $q(x)\ne 0$. We define the maximum error
\begin{equation}\label{eq:exi}
e(\xi):=\max_{x\in {\cal X}}|f(x)-\xi(x)|=\|\Bf-\xi(\bx)\|_\infty.
\end{equation}
The {\it defect}  of an irreducible $\xi(x)=p(x)/q(x)\in\scrR_{(n_1,n_2)}$ is 
\begin{equation}\label{eq:defect}
\upsilon(p,q):=\min(n_1-\deg(p),n_2-\deg(q)),
\end{equation} 
where $\deg(p)$ and $\deg(q)$ are  the degrees of $p$ and $q$, respectively.  When $\upsilon(p,q)=0$, we say $\xi(x)=p(x)/q(x)$ is {\it non-degenerate}.
As a necessary condition for the irreducible minimax rational approximant $\xi^*=p^*/q^*$  of \eqref{eq:bestf},  the following result  states that the number of {\it reference points} ({aka} the extreme points), i.e., nodes $x_j\in {\cal X}$ that {achieve} $$\left|f_j- \frac{p^*(x_j)}{q^*(x_j)}\right|=e(\xi^*),$$  is at least $n_1+n_2+2-\upsilon(p^*,q^*)$. 
\begin{theorem}{\rm (\cite[Theorem 2.5]{gutk:1983})}\label{thm:extremalNo}
Given $m\ge {n_1}+n_2+2$ distinct nodes ${\cal X}=\{x_j\}_{j=1}^m$ on $\Omega$, suppose $\xi^*=p^*/q^*\in \scrR_{(n_1,n_2)}$ is an irreducible rational polynomial and denote the extremal set   $ {\cal  X}_e(\xi^*)\subseteq {\cal X}$ by 
\begin{equation}\label{eq:extremalset}
 {\cal  X}_e(\xi^*):=\left\{x_j\in {\cal X}:\left|f_j-\frac{p^*(x_j)}{q^*(x_j)}\right|=e(\xi^*)\right\}. 
\end{equation}
If $\xi^*$ is a solution to \eqref{eq:bestf} with 
$
\eta_\infty=\|\Bf-\xi^*(\bx)\|_\infty,
$
 then the cardinality  $|{\cal  X}_e(\xi^*)|\ge  n_1+n_2+2-\upsilon(p^*,q^*)$; that is,  ${\cal  X}_e(\xi^*)$ contains at least $n_1+n_2+2-\upsilon(p^*,q^*)$ nodes. 
\end{theorem}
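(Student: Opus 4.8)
The plan is to argue by contradiction through a first-order perturbation of the optimal pair $(p^*,q^*)$, reducing the assertion to a minimum-support (Haar-type) estimate for functionals annihilating a certain polynomial space. I may assume $\eta_\infty>0$: otherwise every residual $r_j:=f_j-\xi^*(x_j)$ vanishes, so ${\cal X}_e(\xi^*)={\cal X}$ has $m\ge n_1+n_2+2\ge n_1+n_2+2-\upsilon(p^*,q^*)$ points and the bound is immediate. Since $\xi^*$ has finite values on ${\cal X}$, the remark following \eqref{eq:exi} gives $q^*(x_j)\ne 0$ for all $j$.

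First I would linearize. For $(u,v)\in\bbP_{n_1}\times\bbP_{n_2}$ and small $t$, consider the competitor $\xi_t=(p^*+tu)/(q^*+tv)\in\scrR_{(n_1,n_2)}$, which is well defined on ${\cal X}$ for small $t$ because $q^*(x_j)\ne 0$. Differentiating $r_j(t)=f_j-\xi_t(x_j)$ at $t=0$ yields $r_j'(0)=-h(x_j)/q^*(x_j)$, where $h:=u-\xi^*v$, so that $\frac{d}{dt}|r_j(t)|^2\big|_{t=0}=-2\,{\rm Re}\!\big(w_j\,h(x_j)\big)$ with $w_j:=\overline{r_j}/q^*(x_j)\ne 0$. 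Since the error is strictly below $\eta_\infty$ off the (finite) set ${\cal X}_e(\xi^*)$, the standard compactness argument shows: if there exists $h$ in the function space $W:=\bbP_{n_1}+\xi^*\bbP_{n_2}$ (read as values on ${\cal X}$) with ${\rm Re}(w_j h(x_j))>0$ for every $x_j\in{\cal X}_e(\xi^*)$, then $\|\Bf-\xi_t(\bx)\|_\infty<\eta_\infty$ for small $t>0$, contradicting optimality. Hence optimality forbids such a descent direction.

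Next I would convert ``no descent direction'' into an annihilation condition via a theorem of the alternative (Gordan). Regarding the real functionals $h\mapsto{\rm Re}(w_j h(x_j))$ on the complex (hence $\sqrt{-1}$-invariant) space $W$, infeasibility of the strict system produces multipliers $\lambda_j\ge 0$, not all zero, with $\sum_j\lambda_j\,{\rm Re}(w_j h(x_j))=0$ for all $h\in W$; testing on $\sqrt{-1}\,h\in W$ upgrades this to $\sum_j\mu_j\,h(x_j)=0$ for all $h\in W$, where $\mu_j:=\lambda_j w_j$. Thus a nonzero functional $\mu$, supported on $S^+:=\{j:\lambda_j>0\}\subseteq{\cal X}_e(\xi^*)$, annihilates $W$; consequently $|{\cal X}_e(\xi^*)|\ge|S^+|$, and it remains to bound $|S^+|$ from below.

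The crux — and the step I expect to be the main obstacle — is the minimum-support estimate: any nonzero functional annihilating $W$ is supported on at least $n_1+n_2+2-\upsilon(p^*,q^*)$ nodes. I would clear denominators by setting $\rho_j:=\mu_j/q^*(x_j)$, which has the same support as $\mu$ and annihilates the genuine polynomial space $U:=q^*\bbP_{n_1}+p^*\bbP_{n_2}$. Irreducibility $\gcd(p^*,q^*)=1$ forces $q^*\bbP_{n_1}\cap p^*\bbP_{n_2}$ to consist of the multiples $p^*q^*R$ with $\deg R\le\upsilon(p^*,q^*)$, so $\dim U=(n_1+1)+(n_2+1)-(\upsilon(p^*,q^*)+1)=n_1+n_2+1-\upsilon(p^*,q^*)$. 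The decisive bookkeeping is that $U\subseteq\bbP_{\delta}$ with $\delta:=\max(\deg q^*+n_1,\ \deg p^*+n_2)=n_1+n_2-\upsilon(p^*,q^*)$, and $\dim\bbP_\delta=\delta+1=\dim U$; therefore $U=\bbP_{\delta}$ is the full space of polynomials of degree $\le\delta$. A nonzero functional annihilating $\bbP_\delta$ has vanishing moments through order $\delta$, so nonsingularity of Vandermonde systems at distinct nodes forces its support to exceed $\dim\bbP_\delta$, i.e. to have size at least $\delta+2=n_1+n_2+2-\upsilon(p^*,q^*)$. Combining, $|{\cal X}_e(\xi^*)|\ge|S^+|\ge n_1+n_2+2-\upsilon(p^*,q^*)$, as claimed. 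The technical effort is entirely in the degree/dimension accounting that collapses $U$ onto a complete polynomial space; once that identification is secured, the Haar/Vandermonde conclusion is routine.
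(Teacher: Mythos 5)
Your argument is correct, but note that the paper itself offers no proof of this statement: it is imported verbatim from Gutknecht \cite[Theorem 2.5]{gutk:1983}, so there is no in-paper proof to compare against. Your reconstruction is the classical characterization argument, and every step checks out: the perturbation $(p^*+tu)/(q^*+tv)$ does give $r_j'(0)=-(u-\xi^*v)(x_j)/q^*(x_j)$; Gordan's alternative applied to the real functionals $h\mapsto{\rm Re}(w_jh(x_j))$ on the $\sqrt{-1}$-invariant space $W$, upgraded by testing on $\sqrt{-1}h$, correctly yields a nonzero complex annihilator supported in ${\cal X}_e(\xi^*)$; and the decisive bookkeeping is right, since $n_1+n_2-\min(n_1-\deg p^*,\,n_2-\deg q^*)=\max(n_2+\deg p^*,\,n_1+\deg q^*)=\delta$, so the coprimality computation $\dim\bigl(q^*\bbP_{n_1}+p^*\bbP_{n_2}\bigr)=n_1+n_2+1-\upsilon=\dim\bbP_\delta$ forces $U=\bbP_\delta$, and the Vandermonde nonsingularity at distinct nodes then gives support size at least $\delta+2$. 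Two small housekeeping remarks: the inclusion--exclusion formula for $\dim U$ tacitly assumes $p^*\not\equiv 0$ (in the degenerate case $p^*\equiv 0$ irreducibility forces $q^*$ constant and the count $\dim U=n_1+1=n_1+n_2+1-\upsilon$ still comes out right, but by a different route), and the multipliers $\mu_j=\lambda_jw_j$ are nonzero on $S^+$ only because $\eta_\infty>0$ guarantees $r_j\ne 0$ at extremal nodes, which is exactly why you must dispose of the $\eta_\infty=0$ case first, as you do.
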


\subsection{A linearization}\label{subsec:linearity}
In \cite{zhyy:2023}, by introducing a real variable $\eta$, the original minimax problem \eqref{eq:bestf0}  is transformed into the following linearization:
  \begin{align}\nonumber
&\inf_{\eta\in \bbR,~p\in \bbP_{n_1},~{0\not \equiv q}\in \bbP_{n_2}}\eta \\\label{eq:linearity}
 s.t., ~& |f_jq(x_j)-p(x_j)|^{{{2}}}\le \eta |q(x_j)|^{{{2}}},~~\forall j=1,2,\dots,m.
\end{align}
Unlike the original {bi-level} min-max problem \eqref{eq:bestf0},  problem \eqref{eq:linearity} is a single-level optimization, whose infimum is also attainable.
\begin{theorem}\label{thm:attainable_equi}
Given $m\ge {n_1}+n_2+2$ distinct nodes ${\cal X}=\{x_j\}_{j=1}^m$ on $\Omega\subset\bbC$, let $\eta_2$ be the infimum of \eqref{eq:linearity}. Then $\eta_2$  is attainable by a pair $(p,q)$ with $p\in \bbP_{n_1}$ and $0\not \equiv q\in \bbP_{n_2}$. 
\end{theorem}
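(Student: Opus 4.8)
The plan is to use the direct method: take a minimizing sequence for \eqref{eq:linearity}, extract a subsequence converging to a feasible limit, and show this limit attains $\eta_2$. The two structural features I would exploit are the homogeneity of the constraints in $(p,q)$ and the full column rank of $\Psi$.

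First I would record some preliminaries. Since $m\ge n_1+n_2+2$ and every $0\not\equiv q\in\bbP_{n_2}$ has degree $\le n_2<m$, no nonzero $q$ can vanish at all $m$ nodes; taking $q\equiv 1$, $p\equiv 0$ gives a feasible triple with $\eta=\max_j|f_j|^2$, so the feasible set is nonempty and $\eta_2\le\max_j|f_j|^2<\infty$. At any node with $q(x_j)\ne 0$ the constraint forces $\eta\ge|f_j-p(x_j)/q(x_j)|^2\ge 0$, so $\eta_2\ge 0$. I would also note that $\Psi$ has full column rank $n_1+1$: if $\Psi\ba=0$ then $p=\sum_i a_i\psi_i\in\bbP_{n_1}$ vanishes at $m>n_1$ distinct nodes, whence $p\equiv 0$ and $\ba=0$.

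Next, take a minimizing sequence $(\eta^{(k)},p^{(k)},q^{(k)})$ with $\eta^{(k)}\to\eta_2$. Writing $q=[\phi_0,\dots,\phi_{n_2}]\bb$, the constraints are invariant under $(p,q)\mapsto(tp,tq)$, so by homogeneity I may normalize $\|\bb^{(k)}\|_2=1$. The unit sphere in $\bbC^{n_2+1}$ is compact, so along a subsequence $\bb^{(k)}\to\bb^*$ with $\|\bb^*\|_2=1$; since $\{\phi_i\}$ is a basis, $q^*$ is a genuine nonzero element of $\bbP_{n_2}$. To control the numerators I would bound the point values: the constraint and triangle inequality give
\[
|p^{(k)}(x_j)|\le\bigl(|f_j|+\sqrt{\eta^{(k)}}\bigr)\,|q^{(k)}(x_j)|,\qquad\forall j,
\]
and since $\|\bb^{(k)}\|_2=1$ bounds $|q^{(k)}(x_j)|=|(\Phi\bb^{(k)})_j|$ while $\eta^{(k)}$ and the $f_j$ are bounded, the vector $p^{(k)}(\bx)=\Psi\ba^{(k)}$ is uniformly bounded. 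Full column rank of $\Psi$ then yields $\ba^{(k)}=(\Psi^{\HH}\Psi)^{-1}\Psi^{\HH}p^{(k)}(\bx)$ bounded, so after a further subsequence $\ba^{(k)}\to\ba^*$, giving $p^*\in\bbP_{n_1}$. Passing to the limit in the continuous constraints gives $|f_jq^*(x_j)-p^*(x_j)|^2\le\eta_2|q^*(x_j)|^2$ for all $j$, so $(\eta_2,p^*,q^*)$ is feasible with $q^*\not\equiv 0$, and $\eta_2$ is attained. I expect the main obstacle to be exactly this compactness step: ruling out escape to infinity (handled by the homogeneity normalization) and degeneration $q^{(k)}\to 0$ as a polynomial (excluded by $\|\bb^*\|_2=1$), together with the control of the free numerator variable through the rank argument.

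The one delicate point worth flagging is that the limiting $q^*$ may still vanish at some nodes, where the constraint degenerates to $p^*(x_j)=0$. This causes no difficulty, since it is precisely the pointwise limit of the feasibility inequalities and is inherited by continuity; the normalization $\|\bb^*\|_2=1$ guarantees $q^*$ is nonzero \emph{as a polynomial}, which is all that \eqref{eq:linearity} requires.
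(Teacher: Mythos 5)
Your proposal is correct and follows essentially the same route as the paper: normalize the denominator coefficient vector using the homogeneity of the constraints, extract a convergent subsequence to get a nonzero limit $q^*$, bound the numerator's nodal values via the constraint, recover boundedness of the numerator coefficients by inverting the (injective) evaluation map, and pass to the limit in the feasibility inequalities. The only cosmetic difference is that you invert the evaluation map via the normal equations $(\Psi^{\HH}\Psi)^{-1}\Psi^{\HH}$ on all $m$ nodes, whereas the paper uses the Vandermonde matrix on the first $n_1+1$ nodes; both rest on the same fact that a nonzero polynomial of degree at most $n_1$ cannot vanish at $n_1+1$ distinct points.
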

\begin{proof}
For any  $0\not \equiv q\in \bbP_{n_2}$, as $m\ge {n_1}+n_2+2$, there exists $x_j\in \cal{X}$ with $q(x_j)\ne 0$. This implies that $0\le \sqrt{\eta_2} \le f_{\max}:=\max_{1\le j\le m}|f_j|$ where $\sqrt{\eta_2}  \le f_{\max}$ is obtained by choosing a feasible solution $ {\eta} =(f_{\max})^2, ~p\equiv 0$ and $q\equiv 1$. Suppose  $\{(\eta^{(k)},p^{(k)},q^{(k)})\}$ is the feasible sequence so that $p^{(k)}\in \bbP_{n_1}$, $0\not \equiv q^{(k)}\in \bbP_{n_2}$ and $\eta^{(k)}\rightarrow \eta_2$ as $k\rightarrow \infty$.  Noting that $\{(\eta^{(k)},\tau^{(k)}p^{(k)},\tau^{(k)}q^{(k)})\}$ is also feasible for \eqref{eq:linearity} for any $\tau^{(k)}\ne 0$, we then can choose $\tau^{(k)}$ so that the coefficient vector $\bb^{(k)}=[b_0^{(k)},\dots,b_{n_2}^{(k)}]^{\T}\in \bbC^{n_2+1}$ of $\tau^{(k)}q^{(k)}(x)=\sum_{i=0}^{n_2}b_i^{(k)}x^i$ satisfies  $\|\bb^{(k)}\|_2=1 ~(\forall k\ge 0)$. Let $\tilde p^{(k)}:=\tau^{(k)}p^{(k)}$ and $\tilde q^{(k)}:=\tau^{(k)}q^{(k)}$. Thus $\{\tilde q^{(k)}\}$ has a convergent subsequence resulting from a convergent subsequence of $\{\bb^{(k)}\}$. For simplicity of presentation, we assume that $\{\tilde q^{(k)}\}$ itself converges to $0\not \equiv  q\in \bbP_{n_2}$, and $\eta^{(k)}\le \rho_0 ~(\forall k\ge 0)$ for some $\rho_0>0$.

Note that $|f_j\tilde q^{(k)}(x_j)-\tilde p^{(k)} (x_j)|\le \sqrt{\eta^{(k)}} |\tilde q^{(k)}(x_j)|$ implies 
$$
|\tilde p^{(k)} (x_j)|\le \sqrt{\eta^{(k)}} |\tilde q^{(k)}(x_j)|+ |f_j\tilde q^{(k)}(x_j)|\le  (\sqrt{\rho_0}+ {f_{\max}})\rho_1 =:\tilde \rho_1,~\forall x_j\in {\cal X},
$$
where $\rho_1=\sum_{i=0}^{n_2}|x_J|^i$ with $|x_J|=\max_{1\le j\le m}|x_j|$. Denoting $\tilde p^{(k)}(x)=\sum_{i=0}^{n_1}a_i^{(k)}x^i$ in the monomial basis, we can choose the first $n_1+1$ nodes $\{x_j\}_{j=1}^{n_1+1}$ to have 
$$[\tilde p^{(k)}(x_1),\dots,\tilde p^{(k)}(x_{{n_1}+1})]^{\T}=V(x_1,\dots,x_{n_1+1})[a_0^{(k)},\dots,a_{n_1}^{(k)}]^{\T} $$
where $V(x_1,\dots,x_{n_1+1})$ is the Vandermonde matrix associated with $\{x_j\}_{j=1}^{n_1+1}$. This gives 
$$\|[a_0^{(k)},\dots,a_{n_1}^{(k)}]\|_2\le \tilde \rho_1 \sqrt{n_1+1}\|[V(x_1,\dots,x_{n_1+1})]^{-1}\|_2,~\forall k\ge 0.$$ Therefore,    $\{\tilde p^{(k)}\}$ also has a convergent subsequence   with a limit polynomial $p\in \bbP_{n_1}$, and consequently, $(\eta_2, p,q)$ solves \eqref{eq:linearity}.
\end{proof}

As a further step, we shall show in Theorem \ref{thm:linearity} that  the two infimums of \eqref{eq:bestf0} and \eqref{eq:linearity} are consistent. Thus if \eqref{eq:bestf0} admits a solution, it can be recovered from the linearization \eqref{eq:linearity}. It is a generalization of \cite[Theorem 2.1]{zhyy:2023}. The next lemma is used for proving Theorem \ref{thm:linearity}.
\begin{lemma}\label{lem:interp}
Let $1\le s\le \max(n_1,n_2)$. Given any distinct nodes $x_1,\dots,x_s$ in $\bbC$ and the corresponding $f_j\in \bbC, ~j=1,\dots,s$, for any $\epsilon>0$, there is a $\frac{p}{q} \in \scrR_{(n_1,n_2)}$ such that 
$\left|f_j-\frac{p(x_j)}{q (x_j)}\right|\le \epsilon$ and $q (x_j)\ne 0$  $\forall j=1,\dots, s$.
\end{lemma}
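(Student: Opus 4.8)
The plan is to dispose of the lemma by two cases according to which of $n_1,n_2$ dominates, using exact polynomial interpolation in both, and to invoke the tolerance $\epsilon$ only to handle vanishing data values in the second case. Since $1\le s\le \max(n_1,n_2)$, at least one of $s\le n_1$ or $s\le n_2$ must hold, and these two regimes call for genuinely different constructions: the numerator carries the interpolation in the first, the denominator in the second.

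First I would treat the case $s\le n_1$, which is the easy one. Take $q\equiv 1\in \bbP_{n_2}$ and let $p\in\bbP_{n_1}$ be the Lagrange interpolant of degree at most $s-1\le n_1$ determined by $p(x_j)=f_j$ for $j=1,\dots,s$; this exists and is unique because the $x_j$ are distinct. Then $q(x_j)=1\ne 0$ and $f_j-p(x_j)/q(x_j)=0\le\epsilon$, so no perturbation is needed here and we in fact achieve exact interpolation. Next I would treat the remaining case $s\le n_2$ (which necessarily holds whenever $s>n_1$) by interpolating reciprocals. The key device is to perturb the data: choose $\tilde f_j\ne 0$ with $|\tilde f_j-f_j|\le\epsilon$ for every $j$, keeping $\tilde f_j=f_j$ whenever $f_j\ne 0$ and replacing $f_j=0$ by some $\tilde f_j$ with $0<|\tilde f_j|\le\epsilon$, which is possible since $\bbC\setminus\{0\}$ is dense. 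Then set $p\equiv 1\in\bbP_{n_1}$ and let $q\in\bbP_{n_2}$ be the interpolant of degree at most $s-1\le n_2$ with $q(x_j)=1/\tilde f_j$. By construction $q\not\equiv 0$, $q(x_j)=1/\tilde f_j\ne 0$, and $p(x_j)/q(x_j)=\tilde f_j$, whence $|f_j-p(x_j)/q(x_j)|=|f_j-\tilde f_j|\le\epsilon$, as required.

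The only genuine obstacle is the possibility that some $f_j=0$ in the reciprocal construction, which would make $1/f_j$ undefined and block exact interpolation of the reciprocals; this is exactly what forces the statement to request an $\epsilon$-approximation rather than equality, and it is resolved cleanly by the density perturbation above. Everything else reduces to standard polynomial interpolation at distinct nodes, and since the construction only asserts the existence of suitable polynomials $p,q$ of bounded degree, it is independent of the particular choice of bases $\psi_i,\phi_i$ and so poses no additional difficulty.
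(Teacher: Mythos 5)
Your proof is correct and follows essentially the same route as the paper: exact Lagrange interpolation by $p$ with $q\equiv 1$ when $s\le n_1$, and reciprocal interpolation by $q$ with $p\equiv 1$ when $n_1<s\le n_2$, handling zero data values by an $\epsilon$-perturbation. Your choice of an arbitrary nonzero $\tilde f_j$ with $|\tilde f_j|\le\epsilon$ is just a slightly more general phrasing of the paper's specific assignment $q(x_j)=2/\epsilon$ (i.e.\ $\tilde f_j=\epsilon/2$) at nodes where $f_j=0$.
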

\begin{proof}
For $s\le n_1$, choose $q\equiv 1$ and $p\in \bbP_{n_1}$ satisfying the interpolation conditions $p(x_j)=f_j ~(1\le j\le s)$; thus   $\left|f_j-\frac{p(x_j)}{q (x_j)}\right|=0,~1\le j\le s$. For $n_1<s\le n_2$, let $p\equiv 1$ and $q \in \bbP_{n_2}$ satisfying the interpolation conditions: $q(x_j)=\frac{1}{f_j}$ if $f_j\ne 0$, and $q(x_j)=\frac{2}{\epsilon}$ otherwise; thus   $\left|f_j-\frac{p(x_j)}{q (x_j)}\right|\le \frac{\epsilon}{2}<\epsilon,~1\le j\le s$. This completes the proof.
\end{proof}

\begin{theorem}\label{thm:linearity}
 Given $m\ge {n_1}+n_2+2$ distinct nodes ${\cal X}=\{x_j\}_{j=1}^m$ on $\Omega\subset\bbC$, let ${\eta_2}$ be the  infimum of \eqref{eq:linearity}. Then $\eta_2= (\eta_\infty)^2$. Furthermore, whenever \eqref{eq:bestf0} has an irreducible solution $\xi^*=p^*/q^*$,   $((\eta_\infty)^{{{2}}}, p^*,q^*)$ is a solution of  \eqref{eq:linearity}. 
\end{theorem}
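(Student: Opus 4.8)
The plan is to prove the two inequalities $\eta_2\le(\eta_\infty)^2$ and $(\eta_\infty)^2\le\eta_2$ separately, and then to read off the ``furthermore'' assertion from the argument for the second one. For the easy inequality $\eta_2\le(\eta_\infty)^2$, I would take any $\xi=p/q\in\scrR_{(n_1,n_2)}$ with finite error, written in irreducible form, so that boundedness of $\xi$ on ${\cal X}$ forces $q(x_j)\ne 0$ for every $j$ (as recorded just before Theorem \ref{thm:extremalNo}). Then $|f_jq(x_j)-p(x_j)|^2=|f_j-\xi(x_j)|^2|q(x_j)|^2\le\|\Bf-\xi(\bx)\|_\infty^2\,|q(x_j)|^2$, so the triple $(\|\Bf-\xi(\bx)\|_\infty^2,p,q)$ is feasible for \eqref{eq:linearity}; taking the infimum over $\xi$ gives $\eta_2\le(\eta_\infty)^2$.

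The reverse inequality is the substance of the theorem. Using Theorem \ref{thm:attainable_equi}, I would fix a minimizer $(\eta_2,p,q)$ of \eqref{eq:linearity} with $0\not\equiv q\in\bbP_{n_2}$, and set $\cZ=\{j:q(x_j)=0\}$ and $\cN=\{1,\dots,m\}\setminus\cZ$. On $\cN$ the constraint reads $|f_j-\xi(x_j)|\le\sqrt{\eta_2}$ with $\xi=p/q$, while on $\cZ$ it forces $|p(x_j)|^2\le 0$, i.e.\ $p(x_j)=0$ as well, so $p$ and $q$ vanish simultaneously there. If $\cZ=\emptyset$ the argument ends, since $\xi$ then has error $\le\sqrt{\eta_2}$ at every node and $\eta_\infty\le\sqrt{\eta_2}$. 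Otherwise, writing $s=|\cZ|$, the fact that $q$ is a nonzero polynomial of degree at most $n_2$ gives $1\le s\le n_2\le\max(n_1,n_2)$, so Lemma \ref{lem:interp} applies to the nodes $\{x_j\}_{j\in\cZ}$: for any $\epsilon>0$ it yields $\hat p/\hat q\in\scrR_{(n_1,n_2)}$ with $\hat q(x_j)\ne 0$ and $|f_j-\hat p(x_j)/\hat q(x_j)|\le\epsilon$ for all $j\in\cZ$.

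The crux is then the one-parameter perturbation $\xi_t:=(p+t\hat p)/(q+t\hat q)\in\scrR_{(n_1,n_2)}$ for small $t>0$ (the denominator is a nonzero polynomial for small $t$ since it tends to $q\not\equiv 0$). At each $j\in\cN$ one has $q(x_j)\ne 0$, so $\xi_t(x_j)\to\xi(x_j)$ as $t\to 0^+$ and hence $|f_j-\xi_t(x_j)|\le\sqrt{\eta_2}+\epsilon$ once $t$ is small; at each $j\in\cZ$ the simultaneous vanishing $p(x_j)=q(x_j)=0$ cancels the common factor $t$, giving the $t$-independent value $\xi_t(x_j)=\hat p(x_j)/\hat q(x_j)$ (well defined since $\hat q(x_j)\ne 0$), whence $|f_j-\xi_t(x_j)|\le\epsilon$. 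Choosing $t$ small enough yields $\|\Bf-\xi_t(\bx)\|_\infty\le\sqrt{\eta_2}+\epsilon$, so $\eta_\infty\le\sqrt{\eta_2}+\epsilon$; letting $\epsilon\downarrow 0$ gives $(\eta_\infty)^2\le\eta_2$, and combined with the first step, $\eta_2=(\eta_\infty)^2$.

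For the ``furthermore'' claim, suppose \eqref{eq:bestf0} has an irreducible minimax solution $\xi^*=p^*/q^*$. Its error is finite, so $\xi^*$ is bounded on ${\cal X}$ and thus $q^*(x_j)\ne 0$ for all $j$; then $|f_jq^*(x_j)-p^*(x_j)|^2=|f_j-\xi^*(x_j)|^2|q^*(x_j)|^2\le(\eta_\infty)^2|q^*(x_j)|^2$ shows that $((\eta_\infty)^2,p^*,q^*)$ is feasible for \eqref{eq:linearity}, and since its objective value equals the infimum $\eta_2=(\eta_\infty)^2$, it is a minimizer. I expect the main obstacle to be the degenerate possibility $\cZ\ne\emptyset$, where $p/q$ is not a legitimate approximant at the vanishing nodes; the role of Lemma \ref{lem:interp} together with the cancellation of $t$ at those nodes is precisely to repair $p/q$ there without disturbing its good behaviour on $\cN$, and it is exactly the degree bound $s\le\max(n_1,n_2)$ that makes the lemma applicable.
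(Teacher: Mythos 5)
Your proof is correct and follows essentially the same route as the paper's: both invoke Theorem \ref{thm:attainable_equi} to obtain a minimizer of the linearization, observe that $p$ and $q$ vanish simultaneously at the degenerate nodes, and repair the quotient there via Lemma \ref{lem:interp} and the perturbation $(p+t\hat p)/(q+t\hat q)$, whose common factor of $t$ cancels at those nodes while continuity controls the remaining nodes. The only cosmetic difference is that you run the argument directly and let $\epsilon\downarrow 0$, whereas the paper argues by contradiction with the fixed choice $\epsilon=\tfrac{\eta_\infty+\sqrt{\eta_2}}{2}$.
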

\begin{proof} 
First, it is true that ${\sqrt{\eta_2}}\le \eta_\infty$. In fact, for any irreducible sequence $\{\xi^{(k)}=p^{(k)}/q^{(k)}\}$ satisfying $\eta_{\infty}^{(k)}:=\|\Bf-\xi^{(k)}(\bx)\|_\infty\rightarrow \eta_\infty$,  $((\eta_{\infty}^{(k)})^2,p^{(k)},q^{(k)})$ is feasible for \eqref{eq:linearity}, and thus ${\sqrt{\eta_2}}\le \eta_{\infty}^{(k)}\rightarrow \eta_\infty$. 

To show ${\sqrt{\eta_2}}= \eta_\infty$, suppose by contradiction that ${\sqrt{\eta_2}}< \eta_\infty$. {From Theorem \ref{thm:attainable_equi},  \eqref{eq:linearity} has a solution $({\eta_2},\hat p,\hat q)$  with $\hat q\not \equiv 0$.} Suppose without loss of generality that  $\hat q(x_j)=0$ for $j=1,\dots,s$. As $\hat q\not \equiv 0$, we have   $s\le n_2.$ The constraints of \eqref{eq:linearity} lead to 
 $$|f_j\hat q(x_j)-\hat p(x_j)|^{{2}}=|\hat p(x_j)|^{{2}}\le  0\Longrightarrow  \hat p(x_j)=0,\quad \forall j=1,2,\dots,  s.$$
The proof is completed if  $s=0$ (i.e., $\hat q(x_j)\ne 0~\forall j =1,2,\dots, m$) because 
$$
|f_j \hat q(x_j)-\hat p(x_j)|^{{2}}\le {\eta_2} |\hat q(x_j)|^{{2}}\Longrightarrow \left|f_j -\frac{\hat p(x_j)}{\hat q(x_j)}\right|\le {\sqrt{\eta_2}}<\eta_\infty,
$$
contradicting with the fact that $\eta_\infty$ is the infimum of \eqref{eq:bestf0}. 

We next consider $s\ge 1$.  Choose a parameterized rational polynomial 
{$$
\frac{\hat p(x;\delta)}{\hat q(x;\delta)}=\frac{\hat p(x)+\delta \cdot p(x)}{\hat q(x)+\delta \cdot q(x)}\in  \scrR_{(n_1,n_2)},~~\delta \in \bbC,
$$
for some polynomials $p\in \bbP_{n_1}$ and $q\in \bbP_{n_2}$.} We will finish the proof by showing that there is a pair $(p,q)$ and sufficiently small $\delta$ so that $\hat q(x_j;\delta)\ne 0~ \forall j=1,\dots,m,$ and 
\begin{equation}\label{eq:contraction}
\max_{1\le  j\le m}\left|f_j-\frac{\hat p(x_j;\delta)}{\hat q(x_j;\delta)}\right|<\eta_\infty,
\end{equation}
which contradicts with the fact that $\eta_\infty$ is the infimum of \eqref{eq:bestf0}. 

To this end, by Lemma \ref{lem:interp}, for $0<\epsilon:= \frac{\eta_\infty+\sqrt{\eta_2}}{2}<\eta_{\infty}$, there is a $\frac{p}{q} \in \scrR_{(n_1,n_2)}$ such that 
$\left|f_j-\frac{p(x_j)}{q (x_j)}\right|\le \epsilon$ and $q (x_j)\ne 0,~\forall j=1,\dots, s$. 
For any sufficiently small $\delta\ne 0$, we know that $\hat q(x_j;\delta)\ne 0,~1\le j\le m$. Moreover, the parameterized $\frac{\hat p(x;\delta)}{\hat q(x;\delta)}$ with this pair $(p,q)$ satisfies
$$
\left|\frac{\hat p(x_j;\delta)}{\hat q(x_j;\delta)}-f_j\right|=\left|\frac{\hat p(x_j)+\delta \cdot p(x_j)}{\hat q(x_j)+\delta \cdot q(x_j)}-f_j\right|=\left|\frac{p(x_j)}{q(x_j)}-f_j\right|\le \epsilon< \eta_\infty,~\forall  j=1,\dots,s.
$$
On the other hand, for any $j=s+1,\dots,m$, we have 
$$
|f_j \hat q(x_j)-\hat p(x_j)|^{{2}}\le {\eta_2} |\hat q(x_j)|^{{2}}\Longrightarrow \left|f_j -\frac{\hat p(x_j)}{\hat q(x_j)}\right|\le \sqrt{\eta_2}<\eta_\infty,
$$
and
$$
\left|\frac{\hat p(x_j)+\delta \cdot p(x_j)}{\hat q(x_j)+\delta \cdot q(x_j)}-\frac{\hat p(x_j)}{\hat q(x_j)}\right|=|\delta|\cdot \left|\frac{ p(x_j)\hat q(x_j)- q(x_j)\hat p(x_j)}{(\hat q(x_j)+\delta  \cdot q(x_j))\hat q(x_j)}\right|\rightarrow 0,~~{\rm as}~~\delta\rightarrow 0.
$$
Consequently,  for any sufficiently small $\delta$,  it follows 
\begin{align*}
\left|\frac{\hat p(x_j;\delta)}{\hat q(x_j;\delta)}-f_j\right|&\le \left|\frac{\hat p(x_j;\delta)}{\hat q(x_j;\delta)}-\frac{\hat p(x_j)}{\hat q(x_j)}\right|+\left|f_j -\frac{\hat p(x_j)}{\hat q(x_j)}\right|<\eta_\infty, ~\forall j=s+1,\dots, m.
\end{align*}
This leads to \eqref{eq:contraction}, and we have $\sqrt{\eta_2}= \eta_\infty$ by contradiction.

For the last part, according to $\sqrt{\eta_2}= \eta_\infty$ and Theorem \ref{thm:attainable_equi},  whenever \eqref{eq:bestf0} admits an irreducible  solution $\xi^*=p^*/q^*$,  the triplet $((\eta_\infty)^{{2}},p^*,q^*)$ is   a solution of  \eqref{eq:linearity}. 
\end{proof} 

Theorem \ref{thm:linearity} reveals a profound connection, specifically $\eta_2 = (\eta_\infty)^2$, linking the   rational minimax approximation problem \eqref{eq:bestf0} to its linearized counterpart in \eqref{eq:linearity}.  Despite this significant relationship, there are instances where the attainability of infimum  $\eta_\infty$ of \eqref{eq:bestf0} remains elusive and unattainable. 
\begin{example}\label{Eg:eg1}
Consider $n_1=n_2=1$. Let $x_j=j~(1\le j\le 4)$,  $f_1=0$ and $f_j=1$ for $j=2,3,4$. It is true that for any $\xi\in \scrR_{(1,1)}$, $\max_{1\le j\le 4}|f_j-\xi(x_j)|>0$ because a nonconstant $\xi\in \scrR_{(1,1)}$ can only take each value once. However, the infimum $\eta_\infty=0$ since for the sequence $\xi^{(k)}(x)=\frac{x-1}{x-1+\frac1k}\in \scrR_{(1,1)}$, it holds that 
$$
\max_{1\le j\le 4}|f_j-\xi^{(k)}(x_j)|\rightarrow 0,~{\rm as}~k\rightarrow \infty.
$$
On the other hand, for \eqref{eq:linearity}, the infimum $\eta_2=0$ is attainable by $p(x)=q(x)=x-1$.
\end{example}

\subsection{A dual problem}\label{subsec:dual}
Even though we have transformed the original bi-level min-max problem \eqref{eq:bestf0} into \eqref{eq:linearity}, directly handling  \eqref{eq:linearity} is still hard. The idea in \cite{zhyy:2023} is to develop the dual problem of \eqref{eq:linearity}, and then employ Lawson's idea for the linear Chebyshev approximation problem for solving the dual problem. {In particular, the dual function of \eqref{eq:linearity}  can be given by  \cite{zhyy:2023}}
\begin{align}\nonumber
d_2(\bw)&=\min_{\begin{subarray}{c}p\in \bbP_{n_1},~q\in \bbP_{n_2}\\
            \sum_{j=1}^m w_j |q(x_j)|^2=1\end{subarray}}\sum_{j=1}^m w_j |f_j q(x_j)-p(x_j)|^2\\\label{eq:rat-d-compt}
            &=\min_{\begin{subarray}{c}  \ba\in \bbC^{n_1+1},~ \bb\in \bbC^{n_2+1}\\
            \|\sqrt{W}\Phi \bb\|_2 =1\end{subarray}}\left\|\sqrt{W}[-\Psi,F\Phi]  \left[\begin{array}{c}\ba \\\bb\end{array}\right]\right\|_2^2,
\end{align}
where  $W=\diag(\bw), ~F = \diag(\Bf)$. The following weak duality \cite{zhyy:2023} has been proved: 
\begin{equation}\label{eq:weakduality}
\forall \bw\in {\cal S}, ~d_2(\bw)\le (\eta_\infty)^2\Longrightarrow  \max_{\bw\in {\cal S}}d_2(\bw)\le  (\eta_\infty)^2.
\end{equation}
Moreover, by relying on Ruttan's sufficient condition (\cite[{Theorem} 2.1]{rutt:1985};  see also \cite[Theorem 2]{isth:1993} and \cite[Theorem 3]{this:1993}), we can also have the following theoretical guarantee  (see \cite[Theorems 2.2,   4.3]{zhyy:2023}) for solving the minimax approximant $\xi^*$.

\begin{theorem}\label{thm:q-dual}
 Given $m\ge {n_1}+n_2+2$ distinct nodes ${\cal X}=\{x_j\}_{j=1}^m$ on $\Omega$, we have  the weak duality \eqref{eq:weakduality}.  Let $\bw^*\in {\cal S}$ be the solution to the dual 
 problem 
 \begin{equation}\label{eq:dual}
\max_{\bw\in {\cal S}} d_2(\bw),
\end{equation} 
and {$(\ba(\bw^*),\bb(\bw^*))$} achieve the minimum $d_2(\bw^*)$ of \eqref{eq:rat-d-compt}. Then if the associated rational polynomial  $\xi^*(x)=\frac{p^*(x)}{q^*(x)}=\frac{[\psi_0(x),\dots,\psi_{n_1}(x)] {\ba(\bw^*)}}{[\phi_0(x),\dots,\phi_{n_2}(x)] {\bb(\bw^*)}}$ is irreducible and 
\begin{equation}\label{eq:strongdual}
\|\Bf-\xi^*(\bx)\|_\infty=\sqrt{d_2(\bw^*)},
\end{equation}
then $\xi^*$ is the minimax approximant of \eqref{eq:bestf0}. When \eqref{eq:strongdual} holds, then we also have the following complementary slackness property:
\begin{equation}\label{eq:complementopt}
w_j^*\left(\|\Bf-\xi^*(\bx)\|_\infty -|f_j-\xi^*(x_j)|\right)=0, \quad\forall j=1,2,\dots,m.
\end{equation}
\end{theorem}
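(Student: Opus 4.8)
The plan is to establish the two assertions separately, deriving both directly from the weak duality \eqref{eq:weakduality} and the variational formula for $d_2(\bw^*)$ in \eqref{eq:rat-d-compt}. The optimality of $\xi^*$ will follow from a sandwich between $\eta_\infty$ and $\sqrt{d_2(\bw^*)}$, while the complementary slackness will follow from observing that the strong-duality hypothesis \eqref{eq:strongdual} forces a convex combination of the squared pointwise errors to equal its own uniform upper bound, which then propagates term by term.

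For the optimality claim, I would first note that since $\xi^*=p^*/q^*$ is irreducible and $\sqrt{d_2(\bw^*)}=\|\Bf-\xi^*(\bx)\|_\infty$ is finite, we must have $q^*(x_j)\ne 0$ for every $j$ (a common zero of $p^*$ and $q^*$ is excluded by irreducibility, and a zero of $q^*$ alone would make the error infinite). Hence $\xi^*$ is a bona fide feasible point of \eqref{eq:bestf0}, giving $\eta_\infty\le\|\Bf-\xi^*(\bx)\|_\infty$. Combining this with weak duality $\sqrt{d_2(\bw^*)}\le\eta_\infty$ and the strong-duality hypothesis \eqref{eq:strongdual} yields
$$\eta_\infty\le\|\Bf-\xi^*(\bx)\|_\infty=\sqrt{d_2(\bw^*)}\le\eta_\infty,$$
so every inequality is an equality and $\xi^*$ attains the infimum in \eqref{eq:bestf0}, i.e., it is a minimax approximant.

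For the complementary slackness, write $\eta^*:=\|\Bf-\xi^*(\bx)\|_\infty$ and set $u_j:=w_j^*|q^*(x_j)|^2\ge 0$. The normalization constraint in \eqref{eq:rat-d-compt} reads $\sum_j u_j=1$, and factoring $|q^*(x_j)|^2$ out of each summand (legitimate since $q^*(x_j)\ne 0$) gives
$$d_2(\bw^*)=\sum_{j=1}^m w_j^*|f_jq^*(x_j)-p^*(x_j)|^2=\sum_{j=1}^m u_j\,|f_j-\xi^*(x_j)|^2.$$
Thus $d_2(\bw^*)$ is a convex combination of the squared errors, each at most $(\eta^*)^2$. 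Since \eqref{eq:strongdual} forces $d_2(\bw^*)=(\eta^*)^2$, the average equals its uniform bound, so $\sum_j u_j\bigl((\eta^*)^2-|f_j-\xi^*(x_j)|^2\bigr)=0$ with all summands nonnegative; hence every summand vanishes. Cancelling $|q^*(x_j)|^2>0$ gives $w_j^*\bigl((\eta^*)^2-|f_j-\xi^*(x_j)|^2\bigr)=0$, and factoring the difference of squares (dividing by the positive factor $\eta^*+|f_j-\xi^*(x_j)|$ when $\eta^*>0$; the case $\eta^*=0$ being trivial, since then all errors vanish) produces exactly \eqref{eq:complementopt}.

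The step I expect to require the most care is the nonvanishing of $q^*$ at every node, on which both parts silently rely: it is what makes $\xi^*$ a legitimate feasible competitor in the first part and what licenses the factorization of $|q^*(x_j)|^2$ in the second. I would handle it by reading \eqref{eq:strongdual} as presupposing a finite, well-defined error vector, which together with irreducibility forces $q^*(x_j)\ne 0$ for all $j$. Once this is secured, the remaining arguments are an elementary weak-duality sandwich and a convex-combination equality; notably, no appeal to Ruttan's condition itself is needed here, since its role is precisely to guarantee the hypothesis \eqref{eq:strongdual} rather than to enter this derivation.
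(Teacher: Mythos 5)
Your proposal is correct. Note that the paper itself does not prove Theorem \ref{thm:q-dual}; it defers entirely to \cite[Theorems 2.2, 4.3]{zhyy:2023}, so there is no in-paper argument to compare against line by line. Your self-contained derivation is the natural one and uses exactly the ingredients the paper makes available: the weak duality \eqref{eq:weakduality} (which you correctly treat as already established, since the paper presents it as proved in the cited reference rather than as something to be re-derived here), the variational formula \eqref{eq:rat-d-compt} with its normalization $\sum_j w_j^*|q^*(x_j)|^2=1$, and the observation that irreducibility together with the finiteness implicit in \eqref{eq:strongdual} forces $q^*(x_j)\ne 0$ at every node. The sandwich $\eta_\infty\le\|\Bf-\xi^*(\bx)\|_\infty=\sqrt{d_2(\bw^*)}\le\eta_\infty$ and the convex-combination argument (a weighted average of quantities bounded by $(\eta^*)^2$ equals $(\eta^*)^2$ only if every positively weighted term attains the bound) are both sound, and your handling of the degenerate case $\eta^*=0$ and of the division by $\eta^*+|f_j-\xi^*(x_j)|$ is careful. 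You are also right that Ruttan's condition plays no direct role in the deduction: its function, as the paper's surrounding discussion explains, is to characterize when the hypothesis \eqref{eq:strongdual} holds, not to enter this argument.
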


We remark that the condition \eqref{eq:strongdual} implies the strong duality $\max_{\bw\in {\cal S}}d_2(\bw)=  (\eta_\infty)^2$ \cite[Theorem 4.3]{zhyy:2023}, which is equivalent to Ruttan's sufficient condition for the original rational minimax problem \eqref{eq:bestf0}. Therefore, under Ruttan's sufficient condition, the complementary slackness property \eqref{eq:complementopt} necessarily holds at the maximizer $\bw^*$ of  \eqref{eq:rat-d-compt}.  Furthermore, in the framework of dual programming  \eqref{eq:rat-d-compt}, the accuracy of the associated  approximation $\xi$  corresponding to the minimization \eqref{eq:rat-d-compt} at an approximation $\bw$ of $\bw^*$, can be measured by the relative error
 \begin{equation}\nonumber 
 \epsilon(\bw):=\left|\frac{\sqrt{d_2(\bw)}-e(\xi)}{e(\xi)}\right|.
\end{equation} 
This serves as a stopping rule for Lawson's iteration (Algorithm \ref{alg:Lawson}). 

\subsection{Optimality for the dual objective function}\label{subsec:optimality}
To compute the dual function $d_2(\bw)$, a minimization problem \eqref{eq:rat-d-compt} needs to be solved. The following proposition    provides the optimality condition for this minimization. 
\begin{proposition} (\cite[Proposition 3.1]{zhyy:2023}) \label{prop:dual_GEP}
For $\bw\in {\cal S}$, we have  
\begin{itemize}
\item[(i)]
$\bc(\bw)=\left[\begin{array}{c} \ba(\bw) \\ \bb(\bw)\end{array}\right] \in \bbC^{n_1+n_2+2}$ is a solution of \eqref{eq:dual} if and only if it {is} an eigenvector of the Hermitian positive semi-definite generalized eigenvalue problem $(A_{\bw},B_{\bw})$ and  $d_2(\bw)$ is the smallest eigenvalue  satisfying
\begin{equation}\label{eq:dual_GEP}
A_{\bw} \bc(\bw)=d_2(\bw) B_{\bw} \bc(\bw)~ \mbox{ and }~ \bc(\bw)^{\HH}B_{\bw}\bc(\bw) =1,
\end{equation} 
where
\begin{align*}\nonumber
A_{\bw}:&=[-\Psi,F\Phi]^{\HH}W[-\Psi,F\Phi]=\left[\begin{array}{cc}\Psi^{\HH}W\Psi & -\Psi^{\HH}FW\Phi \\-\Phi^{\HH} WF^{\HH}\Psi & \Phi^{\HH}F^{\HH}WF\Phi\end{array}\right],\\\nonumber
B_{\bw}:&= [0,\Phi]^{\HH}W [0,\Phi]=\left[\begin{array}{cc}0 & 0 \\0 & \Phi^{\HH}W\Phi \end{array}\right];
\end{align*}
\item[(ii)] the  Hermitian matrix $H_{\bw}:=A_{\bw} -d_2(\bw) B_{\bw} \succeq 0$, i.e., $H_{\bw}$ is  positive semi-definite;

\item[(iii)] let $W^{\frac12}\Phi=Q_qR_q$ and $W^{\frac12}\Psi=Q_pR_p$ be the thin QR factorizations where $Q_q\in \bbC^{m\times \wtd n_2}$, $Q_p\in \bbC^{m\times \wtd n_1}$, $R_q\in \bbC^{ \wtd n_2 \times (n_2+1)}$,  $R_p\in \bbC^{\wtd n_1 \times (n_1+1)}$ with $\wtd n_1={\rm rank}(W^{\frac12}\Psi)$ and $\wtd n_2={\rm rank}(W^{\frac12}\Phi)$. Then $(d_2(\bw),R_q\bb(\bw))$ is an eigenpair associated with the smallest   eigenvalue of the   Hermitian positive semi-definite  matrix $S_F-S_{qp}S_{qp}^{\HH}\in \bbC^{\wtd n_2\times \wtd n_2}$
satisfying 
\begin{subequations}\label{eq:qeig}
\begin{align}\label{eq:qeiga}
(S_{F}-S_{qp}S_{qp}^{\HH})R_q\bb(\bw)&=d_2(\bw) R_q\bb(\bw),~~\| R_q\bb(\bw)\|_{2}=1,\\\label{eq:qeigb}
R_p\ba(\bw)&=S_{qp}^{\HH}R_q\bb(\bw),
\end{align}
\end{subequations}
where 
$S_F=Q_q^{\HH} |F|^2Q_q\in \bbC^{\wtd n_2\times \wtd n_2},~S_{qp}=Q_q^{\HH}F^{\HH} Q_p\in \bbC^{\wtd n_2 \times \wtd n_1}.$
Moreover, letting  $[Q_p,Q_p^{\perp}]\in \bbC^{m\times m}$ be unitary, then $(\sqrt{d_2(\bw)},R_q\bb(\bw))$ is the right singular pair associated with the smallest singular value of both $(Q_p^{\perp})^{\HH}FQ_q\in \bbC^{(m-\wtd n_1-1)\times (\wtd n_2+1)}$ {and $(I_m-Q_pQ_p^{\HH})FQ_q\in \bbC^{m\times (\wtd n_2+1)}$.}
\end{itemize}
\end{proposition}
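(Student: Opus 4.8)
The plan is to read \eqref{eq:rat-d-compt} as a generalized Rayleigh-quotient minimization and then collapse it, by eliminating the numerator coefficients, to a standard Hermitian eigenvalue (equivalently, singular value) problem of the small size $\wtd n_2$. Writing $\bc=[\ba;\bb]$, the objective is $\bc^{\HH}A_{\bw}\bc$ and the constraint is $\bc^{\HH}B_{\bw}\bc=1$, where both $A_{\bw}=[-\Psi,F\Phi]^{\HH}W[-\Psi,F\Phi]\succeq 0$ and $B_{\bw}=[0,\Phi]^{\HH}W[0,\Phi]\succeq 0$ are Gram matrices. Because $B_{\bw}$ is only positive semidefinite (its $\ba$-block vanishes), the standard Hermitian-definite pencil theory does not apply directly. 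This singularity, concentrated precisely on the $\ba$-directions, is the main obstacle, and the elimination step below is designed to neutralize it while simultaneously exhibiting attainability of the infimum.

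For part (iii) I would first fix $\bb$ and minimize over $\ba$, which is the unconstrained least-squares problem $\min_{\ba}\|W^{\frac12}(F\Phi\bb-\Psi\ba)\|_2^2$. Using the thin QR factorization $W^{\frac12}\Psi=Q_pR_p$, the residual equals $\|(I_m-Q_pQ_p^{\HH})W^{\frac12}F\Phi\bb\|_2^2$ and the minimizing $\ba$ solves $R_p\ba=Q_p^{\HH}W^{\frac12}F\Phi\bb$. Since $F$ and $W$ are diagonal they commute, so $W^{\frac12}F\Phi\bb=FQ_qR_q\bb$ after inserting $W^{\frac12}\Phi=Q_qR_q$; substituting $\by=R_q\bb$ turns the constraint $\|W^{\frac12}\Phi\bb\|_2=1$ into $\|\by\|_2=1$ (as $Q_q$ has orthonormal columns) and the reduced objective into $\by^{\HH}Q_q^{\HH}F^{\HH}(I_m-Q_pQ_p^{\HH})FQ_q\by=\by^{\HH}(S_F-S_{qp}S_{qp}^{\HH})\by$. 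Minimizing over the unit sphere then gives the smallest eigenvalue of $S_F-S_{qp}S_{qp}^{\HH}$, which is \eqref{eq:qeiga}, and back-substitution yields $R_p\ba=S_{qp}^{\HH}\by$, which is \eqref{eq:qeigb}. The singular-value reformulation is immediate from $(I_m-Q_pQ_p^{\HH})=Q_p^{\perp}(Q_p^{\perp})^{\HH}$, since $S_F-S_{qp}S_{qp}^{\HH}=[(Q_p^{\perp})^{\HH}FQ_q]^{\HH}[(Q_p^{\perp})^{\HH}FQ_q]$, so that $\sqrt{d_2(\bw)}$ is the smallest singular value with right singular vector $\by=R_q\bb$; the same holds for $(I_m-Q_pQ_p^{\HH})FQ_q$ because the two share their nonzero singular values. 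Because the reduced problem is over the compact sphere $\|\by\|_2=1$, the minimum is attained and $d_2(\bw)$ is well defined.

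For part (i), I would match the first-order (Lagrange/Wirtinger) stationarity conditions of the constrained minimization to the block rows of the pencil: differentiating $\bc^{\HH}A_{\bw}\bc-\lambda(\bc^{\HH}B_{\bw}\bc-1)$ with respect to $\bar{\ba}$ gives the first block $\Psi^{\HH}W\Psi\ba=\Psi^{\HH}FW\Phi\bb$ (exactly the normal equations of the elimination, and forced because $B_{\bw}$ has no $\ba$-block), while differentiating with respect to $\bar{\bb}$ gives the second block, so together $A_{\bw}\bc=\lambda B_{\bw}\bc$ with $\lambda=\bc^{\HH}A_{\bw}\bc$. Combined with part (iii), every minimizer is an eigenvector of $(A_{\bw},B_{\bw})$ with $\lambda=d_2(\bw)$ the smallest eigenvalue admitting a $B_{\bw}$-normalizable eigenvector, and conversely any such eigenvector scaled so that $\bc^{\HH}B_{\bw}\bc=1$ attains the minimum, giving the equivalence \eqref{eq:dual_GEP}. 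Finally, part (ii) follows by a scaling argument: for any $\bc$ with $\gamma:=\bc^{\HH}B_{\bw}\bc>0$, rescaling to $B_{\bw}$-norm one and using that $d_2(\bw)$ is the minimum gives $\bc^{\HH}A_{\bw}\bc\ge\gamma\,d_2(\bw)$, hence $\bc^{\HH}H_{\bw}\bc\ge 0$; and for $\bc$ with $\bc^{\HH}B_{\bw}\bc=0$ one has $B_{\bw}\bc=0$, so $\bc^{\HH}H_{\bw}\bc=\bc^{\HH}A_{\bw}\bc\ge 0$ since $A_{\bw}\succeq 0$. The two cases together yield $H_{\bw}=A_{\bw}-d_2(\bw)B_{\bw}\succeq 0$.
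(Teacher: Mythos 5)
Your argument is correct. Note that the paper itself states this proposition as an imported result (Proposition 3.1 of the cited work \cite{zhyy:2023}) and gives no proof, so there is nothing to compare against line by line; your route --- eliminating $\ba$ by least squares via the QR factorizations to reduce \eqref{eq:rat-d-compt} to a Rayleigh-quotient minimization for $S_F-S_{qp}S_{qp}^{\HH}$ on the unit sphere, recovering (i) from the Wirtinger stationarity of the Lagrangian (whose first block row is exactly the normal equations of that elimination), and getting (ii) by the scaling argument split on $\bc^{\HH}B_{\bw}\bc>0$ versus $=0$ --- is the natural derivation and is consistent with how the result is used throughout the paper (e.g.\ Corollary \ref{cor:optimality} and the proof of Theorem \ref{thm:mont-rational} both read off exactly these block equations and the reduced eigenproblem \eqref{eq:qeig}).
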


As remarked in \cite{zhyy:2023},  the normalized condition $\bc(\bw)^{\HH}B_{\bw}\bc(\bw) =1$ in \eqref{eq:dual_GEP}  can always be fulfilled whenever $q\not \equiv 0$ and there are at least $n_2+1$ positive elements in $w_j$. In fact,  $0=\bc(\bw)^{\HH}B_{\bw}\bc(\bw) =\sum_{j=1}^mw_j|q(x_j)|^2\Longrightarrow  w_j q(x_j)=0, ~{\forall j=1,2,\dots,m};$ thus,  any node $x_j$ with $w_j>0$ is a zero of $q$, and if $\bw$ has at least $n_2+1$ positive elements, it implies that $q  \equiv 0$. Thus,  in the following discussion, we assume that  {any iterate $\bw^{(k)}$ has at least} $n_2+1$ positive elements.

We remark further  that, by rewriting the first $n_1+1$ rows and the last $n_2+1$ rows of the optimality condition $A_{\bw} \bc(\bw)=d_2(\bw) B_{\bw} \bc(\bw)$ in \eqref{eq:dual_GEP}, we have 
\begin{corollary}\label{cor:optimality}
Let $ \bp=\Psi \ba(\bw)$ and $\bq=\Phi \bb(\bw)$ be from the solution of \eqref{eq:rat-d-compt} with the weight vector $\bw$. Then 
\begin{equation}\label{eq:optimalitytwo}
F\bq-\bp\perp_{\bw}{\rm span}(\Psi),~~F^{\HH}(F\bq-\bp)-d_2(\bw)\bq\perp_{\bw}{\rm span}(\Phi).
\end{equation}
\end{corollary}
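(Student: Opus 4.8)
The plan is to read off both weighted-orthogonality relations directly from the generalized eigenvalue identity $A_{\bw}\bc(\bw)=d_2(\bw)B_{\bw}\bc(\bw)$ furnished by Proposition \ref{prop:dual_GEP}(i), by splitting it into its top $n_1+1$ rows and its bottom $n_2+1$ rows. Writing $\bc(\bw)=[\ba(\bw)^{\T},\bb(\bw)^{\T}]^{\T}$ and recalling $\bp=\Psi\ba(\bw)$, $\bq=\Phi\bb(\bw)$, I would substitute the explicit block forms of $A_{\bw}$ and $B_{\bw}$ stated in Proposition \ref{prop:dual_GEP} and then simplify each block equation separately.

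The top block row only sees the zero upper blocks of $B_{\bw}$, so it reads $\Psi^{\HH}W\Psi\,\ba(\bw)-\Psi^{\HH}FW\Phi\,\bb(\bw)=0$. Factoring out $\Psi^{\HH}W$ and using that $F=\diag(\Bf)$ and $W=\diag(\bw)$ are diagonal (hence commute) rewrites this as $\Psi^{\HH}W(F\bq-\bp)=0$, which is precisely $F\bq-\bp\perp_{\bw}{\rm span}(\Psi)$ once one interprets the weighted inner product as $\langle\bu,\bv\rangle_{\bw}=\bu^{\HH}W\bv$, so that $\bw$-orthogonality to ${\rm span}(\Psi)$ means every column of $\Psi$ pairs to zero with $F\bq-\bp$.

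For the second relation I would expand the bottom block row, $-\Phi^{\HH}WF^{\HH}\Psi\,\ba(\bw)+\Phi^{\HH}F^{\HH}WF\Phi\,\bb(\bw)=d_2(\bw)\Phi^{\HH}W\Phi\,\bb(\bw)$. Collecting the left-hand side as $\Phi^{\HH}F^{\HH}W(F\bq-\bp)$ and moving the right-hand term across gives $\Phi^{\HH}W\bigl(F^{\HH}(F\bq-\bp)-d_2(\bw)\bq\bigr)=0$, where once more the commuting of the diagonal matrices $F^{\HH}$ and $W$ is what lets $W$ be pulled to the outside. This is exactly $F^{\HH}(F\bq-\bp)-d_2(\bw)\bq\perp_{\bw}{\rm span}(\Phi)$, which completes the proof.

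The argument is essentially bookkeeping, so I do not expect a genuine obstacle; the only points demanding care are (a) correctly identifying the symbol $\perp_{\bw}$ with the matrix conditions $\Psi^{\HH}W(\cdot)=0$ and $\Phi^{\HH}W(\cdot)=0$, and (b) consistently exploiting that $F$, $F^{\HH}$, and $W$ are diagonal and therefore commute, which is what allows the scattered $W$ factors to be consolidated into a single weighting of the clean residual vectors $F\bq-\bp$ and $F^{\HH}(F\bq-\bp)-d_2(\bw)\bq$. Once these conventions are fixed, the two displayed relations drop out immediately from the two block rows of \eqref{eq:dual_GEP}.
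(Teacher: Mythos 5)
Your proof is correct and follows exactly the paper's route: the paper obtains \eqref{eq:optimalitytwo} precisely by splitting the generalized eigenvalue identity $A_{\bw}\bc(\bw)=d_2(\bw)B_{\bw}\bc(\bw)$ into its first $n_1+1$ and last $n_2+1$ rows and simplifying the blocks, just as you do. The bookkeeping with the commuting diagonal factors $F$, $F^{\HH}$, $W$ and the identification of $\perp_{\bw}$ with $\Psi^{\HH}W(\cdot)=0$ and $\Phi^{\HH}W(\cdot)=0$ are all handled correctly.
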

Besides the optimality in solving the minimization  \eqref{eq:rat-d-compt} for the minimum $d_2(\bw)$, we can further {obtain} the gradient of $d_2(\bw)$ with respect to the dual variable $\bw$.
\begin{proposition}(\cite[Proposition 5.1]{zhyy:2023})\label{prop:grad}
For $\bw>0$,  let  $d_2(\bw)$ be the smallest eigenvalue of the Hermitian positive semi-definite generalized eigenvalue problem \eqref{eq:dual_GEP}, and $\bc(\bw)=\left[\begin{array}{c} \ba(\bw) \\ \bb(\bw)\end{array}\right] \in \bbC^{n_1+n_2+2}$ be the associated eigenvector. Denote $$\bp =[p_1,\dots,p_m]^{\T}=\Psi \ba(\bw)\in \bbC^m ~{\rm and}~\bq  =[q_1,\dots,q_m]^{\T}=\Phi \bb(\bw)\in \bbC^m.$$ 
If $d_2(\bw)$ is a simple eigenvalue, then 
$d_2(\bw)$ is differentiable with respect to $\bw$ and its gradient is 
\begin{equation} 
\nabla d_2(\bw)=\left[\begin{array}{c}|f_1 q_1 - p_1 |^2-d_2(\bw)| q_1 |^2 \\|f_2 q_2 - p_2 |^2-d_2(\bw)| q_2 |^2  \\\vdots \\|f_m q_m - p_m |^2-d_2(\bw)| q_m |^2 \end{array}\right] =:|F \bq -\bp |^2-d_2(\bw)|\bq |^2\in \bbR^m. \label{eq:gradd2}
\end{equation}
\end{proposition}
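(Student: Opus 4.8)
The plan is to view $d_2(\bw)$ as the smallest finite eigenvalue of the Hermitian pencil $(A_{\bw},B_{\bw})$ of Proposition~\ref{prop:dual_GEP} and to differentiate it by a Hellmann--Feynman-type argument, exploiting the crucial structural fact that both $A_{\bw}$ and $B_{\bw}$ are \emph{linear} in $\bw$ through $W=\diag(\bw)$. First I would record the Rayleigh-quotient identity: left-multiplying $A_{\bw}\bc(\bw)=d_2(\bw)B_{\bw}\bc(\bw)$ by $\bc(\bw)^{\HH}$ and using the normalization $\bc(\bw)^{\HH}B_{\bw}\bc(\bw)=1$ from \eqref{eq:dual_GEP} gives
\begin{equation}\nonumber
d_2(\bw)=\bc(\bw)^{\HH}A_{\bw}\bc(\bw),
\end{equation}
so the entire computation reduces to differentiating this single scalar.

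Before differentiating I must secure differentiability of $d_2(\bw)$ and of a suitably normalized eigenvector $\bc(\bw)$. Because $\bw>0$, the factor $W^{\frac12}$ is invertible, so $\wtd n_1=\mathrm{rank}(W^{\frac12}\Psi)=\mathrm{rank}(\Psi)$ and $\wtd n_2=\mathrm{rank}(W^{\frac12}\Phi)=\mathrm{rank}(\Phi)$ stay constant on a neighborhood of $\bw$; hence the reduced standard Hermitian eigenproblem $S_F-S_{qp}S_{qp}^{\HH}$ of Proposition~\ref{prop:dual_GEP}(iii) has fixed size and depends smoothly on $\bw$ there. The simplicity hypothesis then lets me invoke the classical analytic perturbation theory for a simple eigenvalue of a Hermitian matrix depending smoothly on a parameter, which guarantees that $d_2(\bw)$ and an associated eigenvector are differentiable; transporting this back to the pencil furnishes a differentiable, $B_{\bw}$-normalized $\bc(\bw)$.

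With differentiability in hand I write $\dot{(\cdot)}=\partial/\partial w_j$ and differentiate $d_2=\bc^{\HH}A_{\bw}\bc$. The two eigenvector-derivative terms combine: Hermiticity of $A_{\bw}$ together with $A_{\bw}\bc=d_2 B_{\bw}\bc$ turns $\bc^{\HH}A_{\bw}\dot{\bc}+\dot{\bc}^{\HH}A_{\bw}\bc$ into $d_2\bigl(\bc^{\HH}B_{\bw}\dot{\bc}+\dot{\bc}^{\HH}B_{\bw}\bc\bigr)$. Differentiating the constraint $\bc^{\HH}B_{\bw}\bc=1$ shows this bracket equals $-\bc^{\HH}\dot{B}_{\bw}\bc$, so the eigenvector derivative cancels completely and leaves the clean identity
\begin{equation}\nonumber
\frac{\partial d_2}{\partial w_j}=\bc^{\HH}\bigl(\dot{A}_{\bw}-d_2\,\dot{B}_{\bw}\bigr)\bc.
\end{equation}
Since $\partial W/\partial w_j=\be_j\be_j^{\T}$, the explicit forms give $\dot{A}_{\bw}=[-\Psi,F\Phi]^{\HH}\be_j\be_j^{\T}[-\Psi,F\Phi]$ and $\dot{B}_{\bw}=[0,\Phi]^{\HH}\be_j\be_j^{\T}[0,\Phi]$; recognizing $[-\Psi,F\Phi]\bc=F\bq-\bp$ and $[0,\Phi]\bc=\bq$ collapses the two quadratic forms to $|f_jq_j-p_j|^2$ and $|q_j|^2$, yielding $\partial_{w_j}d_2=|f_jq_j-p_j|^2-d_2|q_j|^2$, exactly the $j$th entry of \eqref{eq:gradd2}.

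The hard part will be the differentiability step rather than the algebra: one must argue that a simple eigenvalue of a merely \emph{semi}-definite pencil — and, more delicately, a normalized eigenvector determined only up to a unimodular phase — can be selected as smooth functions of $\bw$. The fortunate feature of the Hellmann--Feynman cancellation is that the phase-ambiguous eigenvector derivative is never actually computed; only its \emph{existence} is used, so the phase indeterminacy becomes harmless as soon as differentiability has been established.
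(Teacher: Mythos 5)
Your argument is correct, and it is the standard first-order (Hellmann--Feynman) perturbation computation for a simple eigenvalue of a pencil that is linear in $\bw$: the Rayleigh-quotient identity, the cancellation of the eigenvector-derivative terms via the constraint $\bc^{\HH}B_{\bw}\bc=1$, and the collapse of $\bc^{\HH}\bigl(\dot A_{\bw}-d_2\dot B_{\bw}\bigr)\bc$ to $|f_jq_j-p_j|^2-d_2|q_j|^2$ all check out, and your handling of the differentiability issue (passing to the reduced Hermitian matrix of Proposition \ref{prop:dual_GEP}(iii), where $\bw>0$ fixes the ranks, and noting that the phase ambiguity of the eigenvector is immaterial) is adequate. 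Note that the paper itself states this proposition as imported from \cite[Proposition 5.1]{zhyy:2023} and gives no proof, so there is nothing to compare against line by line; your derivation is the natural one for this statement.
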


\subsection{The {\tt d-Lawson} iteration}
Within the framework of the dual problem, it has been claimed  \cite{zhyy:2023} that Lawson's iteration is a method for solving the dual problem \eqref{eq:dual}.  For the rational minimax approximation problem, Lawson's iteration \cite{zhyy:2023} is implemented as in Algorithm \ref{alg:Lawson}. Numerical results have been reported in \cite{zhyy:2023}, indicating that  {\tt d-Lawson}\footnote{The MATLAB code of {\tt d-Lawson} is available at \url{https://ww2.mathworks.cn/matlabcentral/fileexchange/167176-d-lawson-method}.}  generally converges monotonically with respect to the dual function value $d_2(\bw)$.  In our following discussion, we assume $\epsilon_w=0$ in Step 1 of Algorithm \ref{alg:Lawson}.

\begin{algorithm}[thb!!!]
\caption{A rational Lawson's iteration ({\tt d-Lawson}) \cite{zhyy:2023} for \eqref{eq:bestf}} \label{alg:Lawson}
\begin{algorithmic}[1]
\renewcommand{\algorithmicrequire}{\textbf{Input:}}
\renewcommand{\algorithmicensure}{\textbf{Output:}}
\REQUIRE Given samples $\{(x_j,f_j)\}_{j=1}^m$ and $0\le n_1+n_2+2\le m$ with $x_j\in \Omega$, a relative tolerance for the strong duality $\epsilon_r>0$, the maximum number $k_{\rm maxit}$ of iterations; 
        \smallskip
\STATE  (Initialization) Let $k=0$; choose $0<\bw^{(0)}\in {\cal S}$  and a  tolerance $\epsilon_w$ for the weights;
\STATE (Filtering) Remove nodes $x_i$ with $w_i^{(k)}<\epsilon_w$;
\STATE Compute  $d_2(\bw^{(k)})$ and the associated vector $\xi^{(k)}(\bx)={[\Psi \ba(\bw^{(k)})]./[\Phi \bb(\bw^{(k)})]}$  according to Proposition \ref{prop:dual_GEP};
  
\STATE ({Stopping} rule)  Stop either if $k\ge k_{\rm maxit}$ or 
\begin{equation}\nonumber 
\epsilon(\bw^{(k)}):=\left|\frac{\sqrt{d_2(\bw^{(k)})}-e(\xi^{{(k)}})}{e(\xi^{{(k)}})}\right|<\epsilon_r,~~{\rm where}~~e(\xi^{{(k)}})=\|\Bf-\xi^{(k)}(\bx)\|_\infty;
\end{equation} 
 
\STATE (Updating weights) Update the weight vector $\bw^{(k+1)}$ according to 
\begin{equation}\label{eq:lawson}
w_j^{(k+1)}=\frac{w_j^{(k)}\left|f_j-\xi^{(k)}(x_j)\right|^{\beta}}{\sum_{i}w_i^{(k)}\left|f_i-\xi^{(k)}(x_i)\right|^{\beta}},~~\forall j,
\end{equation} 
with the Lawson exponent $\beta>0$, and {go to} Step 2 with $k=k+1$.
\end{algorithmic}
\end{algorithm}  

\begin{remark}\label{rmk:lineara}
It is interesting to point out that {\tt d-Lawson} in Algorithm \ref{alg:Lawson} naturally reduces to the classical Lawson's iteration \cite{laws:1961} for the linear (polynomial) minimax approximation problem when $n_2=0$, which corresponds to $q\equiv 1$ and $\bq^{\HH}W\bq\equiv 1$.  This observation unifies  our subsequent convergence analysis. 
\end{remark}
\begin{remark}\label{rmk:linearb}
In practice, the implementation of Step 3 should handle the stability and accuracy for  computing $d_2(\bw^{(k)})$ and the associated vector $\xi^{(k)}(\bx)=\bp^{(k)}./\bq^{(k)}$ {where $\bp^{(k)}=\Psi \ba(\bw^{(k)})$ and $\bq^{(k)}=\Phi \bb(\bw^{(k)})$.} In \cite{zhyy:2023}, the Vandermonde with Arnoldi process \cite{brnt:2021,hoka:2020,zhsl:2023} is employed for this step.
\end{remark}

It is noticed that Lawson's updating \eqref{eq:lawson} relies on the error vector $\Bf-\xi^{(k)}(\bx)$ to update the weights. In this procedure, each entry of the denominator vector $\bq^{(k)}$ is assumed to be nonzero; this is generally the case in practice,  
and in our following discussion,  we make the following two assumptions:

\begin{itemize}
\item[({\bf A1})] $q_j^{(k)}=\be_j^{\T}\bq^{(k)}\ne 0$ for any {$j=1,2,\dots,m$} and any $k\ge 0$;
\item[({\bf A2})] the cardinality $|{\cal I}_{\bw^{(k)}}|\ge \max\{n_1+1,n_2+1\}$, where $${\cal I}_{\bw^{(k)}}:=\{j|w_j^{(k)}\ne 0, ~{1\le j\le m}\}.$$
\end{itemize}
\begin{proposition}\label{prop:Iwk}
If $d_2(\bw^{(k)})>0$, then 
$$
{\cal I}_{\bw^{(k+1)}}=\{j|w_j^{(k)}r_j^{(k)}\ne 0, ~{1\le j\le m}\},~~{\rm where}~~r_j^{(k)}=|f(x_j)-\xi^{(k)}(x_j)|.
$$
\end{proposition}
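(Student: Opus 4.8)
The plan is to read off the support of $\bw^{(k+1)}$ directly from the Lawson update \eqref{eq:lawson}, which writes each entry as a quotient with numerator $w_j^{(k)}(r_j^{(k)})^{\beta}$ and a common denominator $D:=\sum_{i} w_i^{(k)}(r_i^{(k)})^{\beta}$. Once $D>0$ is known, the update is well defined and the equivalence $w_j^{(k+1)}\ne 0\Leftrightarrow w_j^{(k)}(r_j^{(k)})^{\beta}\ne 0$ is immediate; since $\beta>0$ we have $(r_j^{(k)})^{\beta}\ne 0\Leftrightarrow r_j^{(k)}\ne 0$, and combining these gives $w_j^{(k+1)}\ne 0\Leftrightarrow w_j^{(k)}r_j^{(k)}\ne 0$, which is exactly the asserted description of ${\cal I}_{\bw^{(k+1)}}$. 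Thus the only real content is verifying that the common denominator $D$ is strictly positive, and this is precisely where the hypothesis $d_2(\bw^{(k)})>0$ enters.

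To establish $D>0$, I would rewrite the optimal value of \eqref{eq:rat-d-compt} at $\bw^{(k)}$ in terms of the pointwise errors $r_j^{(k)}$. Writing $\bp^{(k)}=\Psi\ba(\bw^{(k)})$ and $\bq^{(k)}=\Phi\bb(\bw^{(k)})$ for the minimizing pair, the attained value is
$$
d_2(\bw^{(k)})=\sum_{j=1}^m w_j^{(k)}\bigl|f_j q_j^{(k)}-p_j^{(k)}\bigr|^2 .
$$
Invoking assumption \textbf{(A1)}, that $q_j^{(k)}\ne 0$ for every $j$, I factor each summand as $|q_j^{(k)}|^2\,|f_j-p_j^{(k)}/q_j^{(k)}|^2=|q_j^{(k)}|^2(r_j^{(k)})^2$, so that
$$
d_2(\bw^{(k)})=\sum_{j=1}^m w_j^{(k)}\,|q_j^{(k)}|^2\,(r_j^{(k)})^2 .
$$
Since every term on the right is nonnegative, the hypothesis $d_2(\bw^{(k)})>0$ forces at least one index $j_0$ with $w_{j_0}^{(k)}>0$ and $r_{j_0}^{(k)}>0$ (the factor $|q_{j_0}^{(k)}|^2$ being automatically positive by \textbf{(A1)}). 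For that index $w_{j_0}^{(k)}(r_{j_0}^{(k)})^{\beta}>0$, and because all entries of $\bw^{(k)}$ are nonnegative, $D\ge w_{j_0}^{(k)}(r_{j_0}^{(k)})^{\beta}>0$, so \eqref{eq:lawson} is well defined and the bookkeeping of the first paragraph applies.

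I do not expect a serious obstacle; the statement is essentially a support identity for the multiplicative Lawson reweighting. The one point requiring care is the well-definedness of \eqref{eq:lawson}: without $d_2(\bw^{(k)})>0$ the denominator $D$ could vanish, which happens precisely when every surviving node is an exact interpolation point (that is, $r_j^{(k)}=0$ whenever $w_j^{(k)}>0$), in which case the iteration breaks down and ${\cal I}_{\bw^{(k+1)}}$ is undefined. The identity for $d_2(\bw^{(k)})$ in terms of $(r_j^{(k)})^2$ is the bridge that converts the analytic hypothesis $d_2(\bw^{(k)})>0$ into the combinatorial fact that some admissible node carries nonzero error, and it relies crucially on \textbf{(A1)}; assumption \textbf{(A2)} is used only implicitly, to guarantee that the normalized minimization defining $d_2(\bw^{(k)})$ in \eqref{eq:rat-d-compt} is feasible.
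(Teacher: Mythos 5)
Your proof is correct and follows essentially the same route as the paper's: use \textbf{(A1)} together with $d_2(\bw^{(k)})>0$ to conclude that the common denominator in the Lawson update \eqref{eq:lawson} is strictly positive, and then read off the support from the fact that $w_j^{(k+1)}=0$ if and only if $w_j^{(k)}r_j^{(k)}=0$. You merely spell out the intermediate identity $d_2(\bw^{(k)})=\sum_j w_j^{(k)}|q_j^{(k)}|^2(r_j^{(k)})^2$ that the paper leaves implicit.
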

\begin{proof}
By ({\bf A1}), the condition $d_2(\bw^{(k)})>0$ implies $\sum_{i}w_i^{(k)}\left|f_i-\xi^{(k)}(x_j)\right|^{\beta}>0$. Thus, according to Lawson's updating rule \eqref{eq:lawson}, the conclusion follows due to the fact that $w_j^{(k+1)}=0$ is equivalent to $w_j^{(k)}r_j^{(k)}=0$.
\end{proof}

\subsection{The roadmap of the convergence analysis}\label{subsec:convg-framework}
Before proceeding the detailed convergence analysis for {\tt d-Lawson}, we present a roadmap of our analysis in Fig.  \ref{fig:roadmap}  which can help the reader to follow the overly technical presentation and make the main novelty of our theoretical analysis clearer. 
  \begin{figure}[htb!!!] 
  \begin{tikzpicture}[node distance=20pt]
  \node[draw, rounded corners]                        (primal)   {\small Minimax approx. \eqref{eq:bestf0}};
    \node[draw,right=80pt, rounded corners]                        (dual)   {\small The dual \eqref{eq:dual}: $\max_{\bw\in {\cal S}} d_2(\bw)$};
  \node[draw, below=of dual]                         (optimality)  {\small Corollary \ref{cor:optimality}: optimality for $d_2(\bw^{(k+1)})$};
  \node[draw, below=of optimality]  (bound)  {\small Theorem \ref{thm:bnd1}: a bound $\sqrt{d_2^{(k+1)}}\ge d_2^{(k)}\cdot \chi^{(k)}(\beta)$};
    \node at (2,-4)  [draw,  rounded corners, aspect=1.2]     (linear)  {\small Linear minimax approx.}; 
  \node at (8,-4) [draw,  rounded corners, aspect=1.2]     (rational)  {\small Rational minimax approx.};

  \node at (-0,-6)  [draw,  aspect=1.2]  (L-decrease)     {\small $ \chi^{(k)}(\beta)\ge \sqrt{\frac{1}{d_2^{(k)}}}$};
    \node at (3.4,-5.9)  [draw,  aspect=1.2]  (optimal)     {\small $1=\argmax_{\beta>0}\chi^{(k)}(\beta)$};
    \node  [draw,  fill=red!10,draw=blue, below=30pt of L-decrease]  (linear-decrease)     {{\small $d_2^{(k+1)}\ge d_2^{(k)}$}};
      \node  [draw, fill=red!10,draw=blue,  below=38pt of optimal]  (beta=1)     {{\small $\beta=1$ is optimal}};
    
  \node at (8,-6)  [draw,  aspect=1.2]  (R-decrease)     {\small $\chi^{(k)}(\beta)\ge \sqrt{\frac{1}{d_2^{(k)}}}$};
    \node  [draw, fill=red!30,draw=blue, below=30pt of R-decrease]  (r-decrease)     {{\small $d_2^{(k+1)}\ge d_2^{(k)}$}};
      \node at (1.8,-7.5)  [draw, fill=red!30,draw=blue, below=30pt of r-decrease]  (complement)     {{\tiny complementary slackness \eqref{eq:complement0}}};
        \node at (1,-8)  [draw, fill=red!30,draw=blue, left=90pt of complement]  (r-best)     {{\tiny $\xi^{(k)}$ is the minimax approx. of \eqref{eq:bestf0}}};
 
  \draw[->] (primal)  -- (dual)[left];
  \draw[->] (dual)  --node [right=-1pt] {{\tiny at the $(k+1)^{th}$ iteration}}  (optimality);
  \draw[->] (optimality) -- (bound);
  \draw[->] (bound) -- (linear);
  \draw[->] (bound) -- (rational);
  \draw[->] (L-decrease) -- node [right=17pt] {\small Proposition \ref{prop:min}} (linear-decrease);
    \draw[->] (linear) --  node [left=5pt] {\tiny \underline{{$\forall \beta\in [0,2]$}}}(L-decrease);
    
       \draw[->] (rational) --  node [left=-2pt] {\tiny \underline{$\forall \beta>0$ sufficiently small}}(R-decrease);
        \draw[->] (linear) -- (optimal);
      \draw[->] (L-decrease) -- (linear-decrease);
           \draw[->] (optimal) -- (beta=1);
           
            \draw[->] (R-decrease) -- node [left=-1pt] {\small  Theorem \ref{thm:mont-rational}} (r-decrease);
            \draw[->] (r-decrease) -- node [left=-1pt] {\tiny  \underline{if $d_2^{(k+1)}= d_2^{(k)}$} } (complement);
    \draw[->] (R-decrease) --  (r-decrease);
 \draw[->] (complement) -- node [above=-1pt] {\tiny  under additional conditions } (r-best);
  \draw[->] (complement) -- node [below=-1pt] {\tiny  (iii) of Theorem \ref{thm:mont-rational} } (r-best);
\end{tikzpicture}  \caption{The roadmap of the convergence analysis for Lawson's iteration, where $d_2^{(k)}=d_2(\bw^{(k)})$}\label{fig:roadmap}
 \end{figure}

From this roadmap in Fig. \ref{fig:roadmap}, we remark that the main idea is to establish the monotonic property $d_2({\bw^{(k+1)}})\ge d_2({\bw^{(k)}})$ with respect to the dual objective function. In this framework, the dual problem \eqref{eq:dual} plays the most important role, whose solution  corresponds to the minimax approximant of the original \eqref{eq:bestf0} under Ruttan's sufficient condition (see Theorem \ref{thm:q-dual}). The remarkable point is that  {\tt d-Lawson} serves as a simple yet efficient monotonically ascend method for the dual \eqref{eq:dual}.

 Based on the roadmap in Fig. \ref{fig:roadmap}, we also make a remark on the AAA-Lawson iteration  \cite{fint:2018}. Comparing with {\tt d-Lawson}, the main differences between these two   methods are: (1) AAA-Lawson uses   barycentric representations for $\xi^{(k)}$ and,  (2) for a given $\bw^{(k)}\in {\cal S}$, they compute the associated $\xi^{(k)}$ in different ways. We remark that barycentric representations can be very useful for numerical stability, but mathematically (i.e., in exact arithmetic) the dual framework does not rely on a particular representation for $\xi^{(k)}$; in fact, barycentric representations  can also be adopted in representing \eqref{eq:paramt_ab} and the {\tt d-Lawson} version with barycentric representations is one of our ongoing works. The crucial difference between AAA-Lawson and {\tt d-Lawson} is the way of computing the new rational approximant $\xi^{(k)}$ at a given $\bw^{(k)}$; in particular, AAA-Lawson defines $\xi^{(k)}$ as 
 \begin{equation}\label{eq:AAA-Lawson}
\xi_{\tiny{\rm  AAA-Lawson}}^{(k)}=\frac{p^{(k)}}{q^{(k)}},~ (p^{(k)}, q^{(k)})=\argmin_{\begin{subarray}{c} p\in \bbP_{n_1}, q\in \bbP_{n_2} \\ \|\ba\|_2^2+\|\bb\|_2^2=1\end{subarray}}\sum_{j=1}^m w^{(k)}_j |f_j q(x_j)-p(x_j)|^2,
\end{equation}
where $\ba$ and $\bb$ are coefficient vectors of $p^{(k)}$ and $q^{(k)}$, respectively,  in barycentric representations,   
while {\tt d-Lawson} defines $\xi^{(k)}$ as
 \begin{align}\nonumber
\xi^{(k)}&=\frac{p^{(k)}}{q^{(k)}},~ (p^{(k)}, q^{(k)})=\argmin_{\begin{subarray}{c}p\in \bbP_{n_1},~q\in \bbP_{n_2}\\
            \sum_{j=1}^m w^{(k)}_j |q(x_j)|^2=1\end{subarray}}\sum_{j=1}^m w^{(k)}_j |f_j q(x_j)-p(x_j)|^2.
\end{align}
It appears that $\xi_{\tiny{\rm  AAA-Lawson}}^{(k)}$ is a seeming extension of the linear minimax approximation {problem} to the rational case which {lacks} an underlying dual problem. When returning to the linear case with $n_2=0$,  both reduce {to} Lawson's original form (see  Remark \ref{rmk:lineara}), but for the rational case $n_2>0$, $\xi^{(k)}$ is the {more powerful} formulation and generalization because it is derived from the dual framework. We refer the reader to \cite{zhyy:2023} for  {a comparison of the numerical performance of these two methods in terms of accuracy and robustness.} 
 
\section{A lower bound of the dual objective function value}\label{sec:cong_Lawson}
We first relate Lawson's iteration with the gradient ascent direction. Indeed, when the current $0<\bw\in {\cal S}$, $q_j=\bq^{\T}\be_j\ne 0~{\forall j=1,2,\dots,m}$ and $d_2(\bw)$ is the simple eigenvalue of $(A_{\bw},B_{\bw})$, then we know that the direction 
\begin{equation}\nonumber 
\bg(\bw):=\diag\left(\frac{w_1}{|q_1|^2},\ldots,\frac{w_m}{|q_m|^2}\right)\nabla d_2(\bw)\in \bbR^m
\end{equation}
is an ascent direction for the dual function $d_2(\bw)$ because by \eqref{eq:gradd2}
$$
\bg(\bw)^{\T}\nabla d_2(\bw)=\nabla d_2(\bw)^{\T}\diag\left(\frac{w_1}{|q_1|^2},\ldots,\frac{w_m}{|q_m|^2}\right)\nabla d_2(\bw)>0.
$$
Note that using this direction, the {update with step size} $\mu=\frac{1}{d_2(\bw)}>0$ gives
\begin{equation}\nonumber 
\wtd\bw:=\bw+\mu\bg=\diag(\bw)|\Bf-\xi(\bx)|^2/d_2(\bw)
\end{equation}
which, after a scaling $\wtd \bw\leftarrow\frac{\wtd \bw}{\be^{\T}\wtd \bw}\in {\cal S}$,  implies the iteration is the same as Lawson's iteration \eqref{eq:lawson} with $\beta=2$. Note that the scaling $\wtd \bw\leftarrow\frac{\wtd \bw}{\be^{\T}\wtd \bw}$ can be viewed as a certain projection onto ${\cal S}$.  From this point of view, {\it we can say that a Lawson's iteration in Algorithm \ref{alg:Lawson} with the Lawson exponent $\beta=2$ is just an ascent gradient step with a specific step-size followed by a certain projection onto ${\cal S}$.}

To more clearly see the relation between the two consecutive objective values $d_2(\bw^{(k)})$ and $d_2(\bw^{(k+1)})$, we introduce the $\bw$-inner (positive semidefinite) product defined by $\langle\by,\bz \rangle_{\bw}=\by^{\HH}W\bz$ and $\|\by\|_{\bw}=\sqrt{\by^{\HH}W\by}$, where $W=\diag(\bw)$ and $\bw\ge0$. The following lemma, which is a generalized result of the standard least-squares problem, plays an important role in establishing the convergence of Lawson's iteration \cite{laws:1961} for the linear minimax approximation problem \cite[Lemma 13-12]{rice:1969}. 
\begin{lemma}\label{lem:LSmax}
Given $0\le \bw\in \bbR^m$ satisfying $|{\cal I}_{\bw}|\ge n$ with ${\cal I}_{\bw}=\{j|w_j>0\}$,  a matrix $A\in \bbC^{m\times n}$ with ${\rm rank}(A({\cal I}_{\bw},:))=n$ and $\bz\in \bbC^m$, let $\bx_*$ be the solution to the least-squares problem 
$$
\min_{\bx\in {\rm span}(A)}\|\bx-\bz\|_{\bw}.
$$
Then we have 
$$
\frac{\bx_*-\bz}{\|\bx_*-\bz\|_{\bw}}\in \arg\max_{\|\by\|_{\bw}=1,~\by^{\HH}WA=0}|\langle\bz,\by\rangle_{\bw}|.
$$
 \end{lemma}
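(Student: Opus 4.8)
The plan is to exploit the fact that the minimizer $\bx_*$ is precisely the $\bw$-orthogonal projection of $\bz$ onto ${\rm span}(A)$, so that the normalized residual is simultaneously feasible for the constrained maximization and the equality case of a Cauchy--Schwarz estimate. First I would record the variational characterization of $\bx_*$: since it minimizes $\|\bx-\bz\|_{\bw}$ over $\bx\in{\rm span}(A)$, the residual $\br:=\bx_*-\bz$ is $\bw$-orthogonal to ${\rm span}(A)$, i.e. $A^{\HH}W(\bx_*-\bz)=0$, which is exactly $\br^{\HH}WA=0$. Here the rank hypothesis ${\rm rank}(A({\cal I}_{\bw},:))=n$ is used to guarantee that $\|A\bc\|_{\bw}=0$ forces $\bc=0$, so that $\|\cdot\|_{\bw}$ is a genuine norm on the coefficient space and the minimizer (hence $\br$) is well defined. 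Consequently $\by_*:=\br/\|\br\|_{\bw}$ satisfies $\|\by_*\|_{\bw}=1$ and $\by_*^{\HH}WA=0$, so it is feasible for the maximization.

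Next I would bound the objective over the feasible set. For any $\by$ with $\|\by\|_{\bw}=1$ and $\by^{\HH}WA=0$, the constraint means $\by\perp_{\bw}{\rm span}(A)$, and since $\bx_*\in{\rm span}(A)$ we get $\langle\bx_*,\by\rangle_{\bw}=0$. Hence
$$
\langle\bz,\by\rangle_{\bw}=\langle\bz-\bx_*,\by\rangle_{\bw}=-\langle\br,\by\rangle_{\bw},
$$
so that $|\langle\bz,\by\rangle_{\bw}|=|\langle\br,\by\rangle_{\bw}|$. Applying Cauchy--Schwarz for the form $\langle\cdot,\cdot\rangle_{\bw}$ gives $|\langle\br,\by\rangle_{\bw}|\le\|\br\|_{\bw}\|\by\|_{\bw}=\|\br\|_{\bw}$. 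Evaluating the objective at the feasible point $\by_*$ yields $|\langle\bz,\by_*\rangle_{\bw}|=|\langle\br,\by_*\rangle_{\bw}|=\|\br\|_{\bw}$, which meets this upper bound; therefore $\by_*=(\bx_*-\bz)/\|\bx_*-\bz\|_{\bw}$ attains the maximum and lies in the claimed $\arg\max$ set.

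The one genuinely delicate point is that $W=\diag(\bw)$ is only positive semidefinite, since some $w_j$ may vanish, so $\langle\cdot,\cdot\rangle_{\bw}$ is a semi-inner product rather than an inner product. I would therefore invoke Cauchy--Schwarz in the form valid for an arbitrary positive semidefinite sesquilinear form (no definiteness required), and reserve the rank condition for two roles only: ensuring well-posedness of the projection and guaranteeing $\|\br\|_{\bw}>0$ in the nondegenerate situation $\bz\notin{\rm span}(A)$ where the normalization in the statement is meaningful. Apart from that semidefiniteness bookkeeping, the argument is a direct projection-plus-Cauchy--Schwarz computation with no further obstacle.
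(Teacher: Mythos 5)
Your proof is correct, and it reaches the conclusion by a more streamlined route than the paper's. The paper first treats the strictly positive case $\bw>0$: it constructs an explicit $\bw$-orthonormal basis $P$ of the complement of ${\rm span}(A)$ (so that $P^{\HH}WP=I$ and $P^{\HH}WA=0$), parameterizes the feasible set as $\by=P\bt$ with $\|\bt\|_2=1$, and maximizes $|\bz^{\HH}WP\bt|$ by taking $\bt\propto P^{\HH}W\bz$; it then handles general $\bw\ge 0$ by partitioning the coordinates according to ${\cal I}_{\bw}$ and reducing to the positive case. Your argument --- residual orthogonality $\br^{\HH}WA=0$ from the normal equations, the identity $\langle\bz,\by\rangle_{\bw}=-\langle\br,\by\rangle_{\bw}$ on the feasible set, and Cauchy--Schwarz for the semidefinite form with equality at $\by_*=\br/\|\br\|_{\bw}$ --- dispenses with both the basis construction and the case split, precisely because Cauchy--Schwarz requires no definiteness. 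What the paper's parameterization buys is a description of the full maximizer set (unique up to phase when $\bw>0$, arbitrary on the zero-weight coordinates otherwise), which your argument does not provide; but the lemma only asserts membership in the $\arg\max$, so nothing is lost. One small caveat: the rank hypothesis gives uniqueness of $\bx_*$ but does not by itself guarantee $\|\bx_*-\bz\|_{\bw}>0$ even when $\bz\notin{\rm span}(A)$, since the residual could be supported entirely on zero-weight coordinates; as in the paper's own proof, this degenerate case is implicitly excluded so that the normalization in the statement is meaningful.
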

\begin{proof}
Let $W=\diag(\bw)$.  We first consider the case $\bw>0$. 

Denote by $P\in \bbC^{m\times (m-n)}$ the $\bw$-orthogonal basis for the complement \cite[Chapter 1.6.3]{kato:1966} of  ${\rm span}(A)$ satisfying   ${\rm span}(P)\oplus {\rm span}(A)=\bbC^m$, $P^{\HH}WP=I_{m-n}$ and $P^{\HH}WA=0$. Note that $PP^{\HH}W$ and  $I-PP^{\HH}W$ are  projections onto ${\rm span}(P)$ and ${\rm span}(A)$, respectively.
For the least-squares problem, we know that $\bx_*$ is the solution if and only if
$$
(\bx_*-\bz)^{\HH}WA=0,~~(i.e., ~\bx_*-\bz= PP^{\HH}W \bz).
$$
Note $\|\bx_*-\bz\|_{\bw}=\|PP^{\HH}W \bz\|_{\bw}=\|P^{\HH}W\bz\|_2$. 
Also,  the constraint $\|\by\|_{\bw}=1,~\by^{\HH}WA=0$ can be parameterized as $\by=P\bt$ for $\bt\in \bbC^{m-n}$ with $\|\bt\|_{2}=1$. Since
$$
|\langle\bz,\by\rangle_{\bw}|=| \bz^{\HH}WP\bt |\le \|P^{\HH}W\bz\|_2,
$$
and the equality holds if $\bt=P^{\HH}W\bz/\|P^{\HH}W\bz\|_2$, it implies that
$$\by=\frac{PP^{\HH}W\bz}{\|P^{\HH}W\bz\|_2}=\frac{\bx_*-\bz}{\|\bx_*-\bz\|_{\bw}}$$
 is  the maximizer and the conclusion follows.
 
For the general case $\bw\ge 0$, assume ${\cal I}_{\bw}=\{1,2,\dots,t\}$, and partition accordingly  $\bx=[\bx_1^{\T},\bx_2^{\T}]^{\T}$, $\bz=[\bz_1^{\T},\bz_2^{\T}]^{\T}$, $\bw=[\bw_1^{\T},\bw_2^{\T}]^{\T}$,  $\by=[\by_1^{\T},\by_2^{\T}]^{\T}$, and $A=[A_1^{\T},A_2^{\T}]^{\T}$ with $A_1\in \bbC^{t\times n}$ and ${\rm rank}(A_1)=n$. Note 
 \[
 \min_{\bx\in {\rm span}(A)}\|\bx-\bz\|_{\bw}=\min_{\bx_1\in {\rm span}(A_1)}\|\bx_1-\bz_1\|_{\bw_1},
 \]
and the solution {$\bx_*$} is unique by assumptions. Indeed, {$\bx_*$} satisfies 
$
(\bx_*-\bz)^{\HH}WA=0.
$

On the other hand, 
\[
\max_{\|\by\|_{\bw}=1,~\by^{\HH}WA=0}|\langle\bz,\by\rangle_{\bw}|=\max_{\|\by_1\|_{\bw_1}=1,~\by_1^{\HH}W_1A_1=0}|\langle\bz_1,\by_1\rangle_{\bw_1}|
\]
and by the proof of the first case $\bw>0$, we know that any maximizer $\by=[\by_1^{\T},\by_2^{\T}]^{\T}$ takes $\by_1=\frac{\bx{_1}_*-\bz_1}{\|\bx{_1}_*-\bz_1\|_{\bw_1}}$ and $\forall\by_2\in \bbC^{m-t}$. Thus, the conclusion follows.
\end{proof}

\begin{lemma}\label{lem:perp}
Let $(\ba(\bw^{(k)}), \bb(\bw^{(k)}))$ be the solution and  $d_2(\bw^{(k)})$ be the minimum of \eqref{eq:rat-d-compt} with the weight $\bw^{(k)}\in {\cal S}$. Denote  $\bp^{(k)}=\Psi \ba(\bw^{(k)}),~\bq^{(k)}=\Phi \bb(\bw^{(k)})$ and $\xi^{(k)}(\bx)=\bp^{(k)}./\bq^{(k)}$.
If $d_2(\bw^{(k)})>0$, then 
\begin{equation}\label{eq:perp}
(F\bq^{(k)}-\bp^{(k)})./|\Bf-\xi^{(k)}(\bx)|^{ {\beta}}\perp_{\bw^{(k+1)}} {\rm span}(\Psi).
\end{equation}
Here we set $\frac{0}{0}=0$ for convenience.
\end{lemma}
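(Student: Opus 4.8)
The plan is to reduce the asserted $\bw^{(k+1)}$-orthogonality back to the $\bw^{(k)}$-orthogonality already supplied by Corollary~\ref{cor:optimality}, via a direct substitution of Lawson's updating rule \eqref{eq:lawson}. Write $\br^{(k)} := F\bq^{(k)}-\bp^{(k)}$, with entries $r_j^{(k)} = f_j q_j^{(k)} - p_j^{(k)}$. The starting point is the pointwise identity
$$
f_j - \xi^{(k)}(x_j) = f_j - \frac{p_j^{(k)}}{q_j^{(k)}} = \frac{r_j^{(k)}}{q_j^{(k)}},
$$
which is legitimate under assumption (\textbf{A1}) since $q_j^{(k)}\neq 0$; consequently $|f_j-\xi^{(k)}(x_j)|^{\beta} = |r_j^{(k)}|^{\beta}/|q_j^{(k)}|^{\beta}$.

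The crux is a cancellation that I would make explicit by combining the Lawson weight with the division in \eqref{eq:perp}. Set $C := \sum_i w_i^{(k)}|f_i-\xi^{(k)}(x_i)|^{\beta}$, which is strictly positive because $d_2(\bw^{(k)})>0$ (exactly the argument in the proof of Proposition~\ref{prop:Iwk}). Then the $\bw^{(k+1)}$-weight of the $j$-th entry of the vector in \eqref{eq:perp} is
$$
w_j^{(k+1)}\cdot \frac{r_j^{(k)}}{|f_j-\xi^{(k)}(x_j)|^{\beta}} = \frac{1}{C}\,w_j^{(k)}\,\frac{|r_j^{(k)}|^{\beta}}{|q_j^{(k)}|^{\beta}}\cdot \frac{r_j^{(k)}\,|q_j^{(k)}|^{\beta}}{|r_j^{(k)}|^{\beta}} = \frac{1}{C}\,w_j^{(k)}\,r_j^{(k)}.
$$
The factor $|f_j-\xi^{(k)}(x_j)|^{\beta}$ built into the Lawson weight is precisely cancelled against the same quantity in the denominator, and the $|q_j^{(k)}|^{\beta}$ factors drop out as well, leaving simply the $\bw^{(k)}$-weighted residual.

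To conclude, I would test orthogonality against an arbitrary $\bu = \Psi\ba \in {\rm span}(\Psi)$: using the displayed identity entrywise,
$$
\bu^{\HH}W^{(k+1)}\left(\br^{(k)}./\,|\Bf-\xi^{(k)}(\bx)|^{\beta}\right) = \frac{1}{C}\,\bu^{\HH}W^{(k)}\br^{(k)} = 0,
$$
the final equality being the optimality relation $F\bq^{(k)}-\bp^{(k)}\perp_{\bw^{(k)}}{\rm span}(\Psi)$ from Corollary~\ref{cor:optimality}. Since $\bu$ is arbitrary, this establishes \eqref{eq:perp}. There is no genuine analytical obstacle; the only points requiring care are bookkeeping ones, namely verifying $C>0$ so that the update is well-defined, and checking the convention $\tfrac00=0$ at any interpolation node where $r_j^{(k)}=0$ (there both $w_j^{(k+1)}$ and the tested entry vanish, so the term contributes $0 = \tfrac{1}{C}w_j^{(k)}r_j^{(k)}$ and the identity persists).
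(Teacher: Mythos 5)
Your proof is correct and follows essentially the same route as the paper's: substitute Lawson's update \eqref{eq:lawson}, observe that the factor $|f_j-\xi^{(k)}(x_j)|^{\beta}$ in the new weight cancels against the same factor dividing the residual, and reduce the $\bw^{(k+1)}$-orthogonality to the $\bw^{(k)}$-optimality relation $(F\bq^{(k)}-\bp^{(k)})\perp_{\bw^{(k)}}{\rm span}(\Psi)$ of Corollary~\ref{cor:optimality}. The only cosmetic difference is that the paper carries out the cancellation inside the inner product using $f_jq_j-p_j=q_j(f_j-\xi_j)$ while you do it entrywise on the weighted residual; the handling of the $\tfrac{0}{0}$ convention and of $C=\gamma_\beta>0$ matches the paper's treatment via Proposition~\ref{prop:Iwk}.
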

\begin{proof}
For simplicity, we denote the pairs $(\bp^{(k)},\bq^{(k)})$ and $(\bp^{(k+1)},\bq^{(k+1)})$ by $(\bp,\bq)$ and $(\wtd \bp,\wtd \bq)$ at the $k$th and $(k+1)$th step, respectively;  {this applies to other quantities}. 

Notice that $d_2(\bw)>0$ implies $${\cal I}_{\wtd \bw}=\{j|\wtd w_j\ne 0, ~{1\le j\le m}\}=\{j|w_jr_j\ne 0, ~{1\le j\le m}\}$$ by Proposition \ref{prop:Iwk}.  Also, according to \eqref{eq:lawson}, 
$\wtd w_j=w_j|f_j-\xi_j|^\beta/\gamma_\beta,$
where 
$$\gamma_{\beta}:=\sum_{j=1}^mw_j \left|f_j-\xi (x_j)\right|^\beta\left(=\left\||\Bf-\xi (\bx)|^{\frac{\beta}{2}}\right\|_{\bw}^2\right)>0.
$$

As $(\wtd \ba, \wtd\bb)$ is the solution to \eqref{eq:rat-d-compt} associated with the weight $\wtd\bw$, for the given $\wtd \bb$, the vector $\wtd \ba$ is the solution to the following least-squares problem
\begin{equation}\nonumber 
\min_{  \ba\in \bbC^{n_1+1} }\left\| F\wtd\bq-\Psi\ba \right\|_{\wtd\bw}=\min_{  \bz\in{\rm span}(\Psi) }\left\| F\wtd\bq-\bz \right\|_{\wtd\bw};
\end{equation}
thus we have  the optimality $(F\wtd\bq-\wtd\bp)^{\HH}\wtd W \Psi=0$ (see \eqref{eq:optimalitytwo}); analogously, for the previous iteration, we have $(F \bq- \bp)^{\HH}  W \Psi=0$ implying that  $\forall\bt\in {\rm span}(\Psi)$, 
\begin{align}\nonumber
0=\langle \bt,F \bq- \bp \rangle_{\bw}&=\sum_{j=1}^mw_j\bar t_jq_j(f_j-\xi_j) =\sum_{j\in {\cal I}_{\wtd \bw}}w_j\bar t_jq_j(f_j-\xi_j)\\\nonumber
&{=\gamma_\beta\sum_{j\in {\cal I}_{\wtd \bw}}\wtd w_j\bar t_jq_j   (f_j-\xi_j)|f_j-\xi_j|^{ -\beta}}\\\nonumber
&=\gamma_\beta\sum_{j=1}^m\wtd w_j\bar t_jq_j   (f_j-\xi_j)|f_j-\xi_j|^{ -\beta}\quad \quad \mbox{(by ~$\wtd w_j=0~\forall j\not\in {\cal I}_{\wtd \bw}$)}\\\nonumber
&=\gamma_\beta\langle \bt, (F\bq-\bp)./|\Bf-\xi(\bx)|^{ {\beta}}\rangle_{\wtd \bw}.
\end{align}
Thus, $(F\bq-\bp)./|\Bf-\xi(\bx)|^{ {\beta}}\perp_{\wtd \bw} {\rm span}(\Psi)$.  
\end{proof}

We next provide a lower bound for the dual objective function value $d_2(\bw^{(k+1)})$. This lower bound plays a crucial role in finding the near-optimal $\beta$ for Lawson's iteration in the linear minimax approximation problem, and also is a key to establish monotonic convergence $d_2(\bw^{(k+1)})\ge d_2(\bw^{(k)})$ in  the rational minimax  approximation problem.

\begin{theorem}\label{thm:bnd1}
{Let $(\ba(\bw^{(k)}), \bb(\bw^{(k)}))$ be the solution and  $d_2(\bw^{(k)})$ be the minimum of \eqref{eq:rat-d-compt} with the weight $\bw^{(k)}\in {\cal S}$. Denote  $\bp^{(k)}=\Psi \ba(\bw^{(k)}),~\bq^{(k)}=\Phi \bb(\bw^{(k)})$ and $\xi^{(k)}(\bx)=\bp^{(k)}./\bq^{(k)}$.} Then 
\begin{equation}\label{eq:bnd1}
\sqrt{d_2(\bw^{(k+1)})}\ge d_2(\bw^{(k)})\cdot \chi^{(k)}(\beta),
\end{equation}
where $\chi^{(k)}(\beta):=\frac{\left|(\bq^{(k+1)})^{\HH}W^{(k)}\bq^{(k)}\right|}{\gamma_{\beta}~\zeta_{\beta}}$, $W^{(k)}=\diag(\bw^{(k)})$, 
\begin{align*}
\gamma_{\beta}=\left\||\Bf-\xi^{(k)}(\bx)|^{\frac{\beta}{2}}\right\|_{\bw^{(k)}}^2:=\sum_{j=1}^mw_j^{(k)}\left|f_j-\xi^{(k)}(x_j)\right|^\beta, ~
\zeta_{\beta}=\left\|\frac{(\Bf\bq^{(k)}-\bp^{(k)}).}{\left|\Bf-\xi^{(k)}(\bx)\right|^{ {\beta}}}\right\|_{\bw^{(k+1)}},
\end{align*}
and we set $\frac{0}{0}=0$ for convenience.
\end{theorem}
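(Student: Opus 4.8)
The plan is to obtain the bound by first rewriting $\sqrt{d_2(\bw^{(k+1)})}$ as a variational maximum and then feeding it one carefully chosen feasible test vector. Throughout I would abbreviate the step-$k$ and step-$(k+1)$ quantities by $(\bp,\bq,\bw,W)$ and $(\wtd\bp,\wtd\bq,\wtd\bw,\wtd W)$ respectively, exactly as in the proof of Lemma~\ref{lem:perp}, and I would assume $d_2(\bw^{(k)})>0$ (otherwise the claim reduces to $\sqrt{d_2(\bw^{(k+1)})}\ge 0$ and is trivial, and $\gamma_\beta,\zeta_\beta$ degenerate). First I would observe that, by the definition \eqref{eq:rat-d-compt} of the dual function, $\wtd\bp=\Psi\ba(\bw^{(k+1)})$ is precisely the $\wtd\bw$-least-squares projection of $F\wtd\bq$ onto ${\rm span}(\Psi)$, so $d_2(\bw^{(k+1)})=\|F\wtd\bq-\wtd\bp\|_{\wtd\bw}^2$. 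Applying Lemma~\ref{lem:LSmax} with $A=\Psi$, $\bz=F\wtd\bq$ and weight $\wtd\bw$ (the rank hypothesis being guaranteed by assumption~({\bf A2}) together with the fact that $\bbP_{n_1}$ is a Haar system, so the basis matrix of $\bbP_{n_1}$ has full column rank on any set of at least $n_1+1$ distinct nodes) yields the variational identity
\[
\sqrt{d_2(\bw^{(k+1)})}=\max_{\|\by\|_{\wtd\bw}=1,~\by^{\HH}\wtd W\Psi=0}\left|\langle F\wtd\bq,\by\rangle_{\wtd\bw}\right|.
\]
Consequently any single feasible $\by$ produces a lower bound, and the whole game becomes choosing a good one.

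The decisive choice is dictated by Lemma~\ref{lem:perp}: the vector $\bv:=(F\bq-\bp)./|\Bf-\xi^{(k)}(\bx)|^{\beta}$ is $\wtd\bw$-orthogonal to ${\rm span}(\Psi)$ and has $\wtd\bw$-norm exactly $\zeta_\beta$, so $\by:=\bv/\zeta_\beta$ is feasible and gives $\sqrt{d_2(\bw^{(k+1)})}\ge |\langle F\wtd\bq,\bv\rangle_{\wtd\bw}|/\zeta_\beta$. Next I would compute $\langle F\wtd\bq,\bv\rangle_{\wtd\bw}$ explicitly. Substituting the Lawson update $\wtd w_j=w_j|f_j-\xi^{(k)}(x_j)|^{\beta}/\gamma_\beta$ cancels the factor $|f_j-\xi^{(k)}(x_j)|^\beta$ entrywise (the convention $0/0=0$ renders the entries with vanishing error harmless, since there $f_jq_j-p_j=0$ as well), thereby converting the $\wtd\bw$-inner product back into a $\bw$-inner product, namely $\langle F\wtd\bq,\bv\rangle_{\wtd\bw}=\gamma_\beta^{-1}\langle F\wtd\bq,F\bq-\bp\rangle_{\bw}$.

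Finally I would evaluate $\langle F\wtd\bq,F\bq-\bp\rangle_{\bw}$ using the optimality at step $k$. Because $F$ and $W$ are diagonal and hence commute, $\langle F\wtd\bq,F\bq-\bp\rangle_{\bw}=\langle\wtd\bq,F^{\HH}(F\bq-\bp)\rangle_{\bw}$; and since $\wtd\bq=\Phi\bb(\bw^{(k+1)})\in{\rm span}(\Phi)$, the second relation in Corollary~\ref{cor:optimality}, namely $F^{\HH}(F\bq-\bp)-d_2(\bw^{(k)})\bq\perp_{\bw}{\rm span}(\Phi)$, collapses this to $d_2(\bw^{(k)})\,\langle\wtd\bq,\bq\rangle_{\bw}=d_2(\bw^{(k)})\,(\bq^{(k+1)})^{\HH}W^{(k)}\bq^{(k)}$. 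Chaining the three displays gives
\[
\sqrt{d_2(\bw^{(k+1)})}\ge\frac{|\langle F\wtd\bq,\bv\rangle_{\wtd\bw}|}{\zeta_\beta}=\frac{d_2(\bw^{(k)})\,\bigl|(\bq^{(k+1)})^{\HH}W^{(k)}\bq^{(k)}\bigr|}{\gamma_\beta\,\zeta_\beta}=d_2(\bw^{(k)})\,\chi^{(k)}(\beta),
\]
as claimed.

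I expect the main obstacle to be bookkeeping rather than conceptual: one must verify that the $0/0$ convention is applied consistently so the entrywise cancellation of $|f_j-\xi^{(k)}(x_j)|^\beta$ is legitimate, and that $\zeta_\beta>0$ so the division is valid (if $\zeta_\beta=0$ then $\bv$ vanishes in the $\wtd\bw$-norm, whence by Cauchy--Schwarz the numerator $(\bq^{(k+1)})^{\HH}W^{(k)}\bq^{(k)}$ vanishes too, and the inequality degenerates and is to be read through the convention). The genuinely clever point, which I would flag as the heart of the argument, is recognizing that Lemma~\ref{lem:perp} supplies exactly the test vector whose $\wtd\bw$-pairing with $F\wtd\bq$ telescopes — via the commutation of the diagonal $F,W$ and the $\Phi$-optimality condition — into the quadratic form $(\bq^{(k+1)})^{\HH}W^{(k)}\bq^{(k)}$ featured in $\chi^{(k)}(\beta)$.
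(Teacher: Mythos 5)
Your proposal is correct and follows essentially the same route as the paper's proof: the variational characterization of $\sqrt{d_2(\bw^{(k+1)})}$ via Lemma~\ref{lem:LSmax}, the feasible test vector $(F\bq^{(k)}-\bp^{(k)})./|\Bf-\xi^{(k)}(\bx)|^{\beta}$ supplied by Lemma~\ref{lem:perp}, the cancellation of $|f_j-\xi^{(k)}(x_j)|^{\beta}$ through the Lawson update, and the final collapse to $d_2(\bw^{(k)})\,(\bq^{(k+1)})^{\HH}W^{(k)}\bq^{(k)}$ via the second relation of Corollary~\ref{cor:optimality}. Your extra remarks on the $0/0$ convention and the degenerate case $\zeta_\beta=0$ are consistent with how the paper handles these points through the index set ${\cal I}_{\bw^{(k+1)}}$.
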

\begin{proof} 
Following the notation in the proof of Lemma \ref{lem:perp}, we drop the superscript $k$ in each quantity related with the $k$th iteration. The result of \eqref{eq:bnd1} is trivial if $d_2(\bw)=0$, and we assume $d_2(\bw)>0$, which, by Proposition \ref{prop:Iwk}, implies ${\cal I}_{\wtd \bw}=\{j|\wtd w_j\ne 0, ~{1\le j\le m}\}=\{j|w_jr_j\ne 0, ~{1\le j\le m}\}.$

By assumptions ({\bf A1}), ({\bf A2}), $\wtd w_j=w_j|f_j-\xi_j|^\beta/\gamma_\beta$, and the fact that any $n_1+1$ rows of $\Psi$ are linearly independent, we can apply Lemmas \ref{lem:LSmax} and \ref{lem:perp} to get
\begin{align}\nonumber
d_2(\wtd \bw)&=\langle F\wtd\bq-\wtd\bp,F\wtd\bq-\wtd\bp \rangle_{\wtd\bw}  =\left|\left\langle F\wtd\bq-\wtd\bp,F\wtd\bq \right\rangle_{\wtd\bw}\right| =\sqrt{d_2(\wtd \bw)}  ~ \left|\left\langle \frac{F\wtd\bq-\wtd\bp}{\|F\wtd\bq-\wtd\bp\|_{\wtd \bw}},F\wtd\bq \right\rangle_{\wtd\bw}\right|\\ \nonumber
&=\sqrt{d_2(\wtd \bw)} ~\max_{\|\by\|_{\wtd\bw}=1,~\by^{\HH}\wtd W\Psi=0}\left|\left\langle F\wtd\bq,\by\right\rangle_{\wtd\bw}\right|
\\\nonumber
&\ge \frac{\sqrt{d_2(\wtd \bw)}}{\zeta_\beta} ~\left|\left\langle F\wtd\bq,  (F\bq-\bp)./|\Bf-\xi(\bx)|^{ {\beta} }\right\rangle_{\wtd\bw}\right|\quad \mbox{\rm (by Lemmas \ref{lem:LSmax} and \ref{lem:perp})}\\\nonumber
&=\frac{\sqrt{d_2(\wtd \bw)}}{\zeta_\beta} ~\left|\sum_{j=1}^m\wtd w_j\overline{f_j \wtd{q_j}}(f_jq_j-p_j)/|f_j-\xi_j|^\beta\right|\\\nonumber
&=\frac{\sqrt{d_2(\wtd \bw)}}{\gamma_\beta\zeta_\beta} ~\left|\sum_{j=1}^m  w_j\overline{f_j \wtd{q_j}}(f_jq_j-p_j)\right|\\\nonumber
&=\frac{\sqrt{d_2(\wtd \bw)}}{\gamma_\beta\zeta_\beta} ~\left|\langle F\wtd \bq,F\bq-\bp\rangle_{\bw}\right|  =\frac{\sqrt{d_2(\wtd \bw)}}{\gamma_\beta\zeta_\beta}  {d_2(\bw)\left|\wtd\bq^{\HH}W\bq\right|},
\nonumber 
 \end{align}
 where the last equality $\langle F\wtd \bq, F\bq-\bp\rangle_{\bw}=d_2(\bw) \wtd\bq^{\HH}W\bq$ is due to the second optimality {formula} in \eqref{eq:optimalitytwo} for the pair $(\bp,\bq)$: 
 \begin{align*}
F^{\HH}(F\bq-\bp)-d_2(\bw)\bq\perp_{\bw}{\rm span}(\Phi)&\Longrightarrow \wtd \bb^{\HH}\Phi^{\HH}W(F^{\HH}(F\bq-\bp)-d_2(\bw)\bq)=0\\
&\Longrightarrow \langle F\wtd \bq, F\bq-\bp\rangle_{\bw}=d_2(\bw) \wtd\bq^{\HH}W\bq.
\end{align*}
The proof is complete.
\end{proof}  
\section{$\beta=1$ is near-optimal   for the linear minimax approximation problem}\label{sec:beta_Lawson}
Our strategy for defining the optimal parameter $\beta$ at the $k$th iteration  is based on the lower bound \eqref{eq:bnd1}. 
We remark that $\wtd \bp$ and $\wtd \bq$ play different roles in Lawson's iteration (Algorithm \ref{alg:Lawson}). In fact,   recalling the dual function \eqref{eq:rat-d-compt} or the optimality condition \eqref{eq:optimalitytwo},  we know that $\wtd \bp$ is essentially from a least-squares problem for the given $\wtd \bq$, which is essentially linearly dependent on the data $F,\wtd \bq,\wtd\bw$; however, as $\wtd\bq$ both {appears} in the constraint and the objective function in  \eqref{eq:rat-d-compt}, $\wtd \bq$ is related with an eigenvector (refer to \eqref{eq:qeig}) of a matrix associated with the data $F,\wtd \bp,\wtd\bw$, and therefore, is nonlinearly dependent on these data. 
For the lower bound \eqref{eq:bnd1}, it is interesting to notice that only the numerator depends on the solution pair $(\wtd\bp,\wtd\bq)$ at new $\wtd\bw$, while the denominator $\gamma_\beta\zeta_{\beta}$ is only related with the information $\bp,\bq,\bw$ at the current iteration. For the polynomial minimax approximation problem (i.e., $n_2=0$), particularly, as $\bq^{(k+1)}\equiv \bq^{(k)}$ and $(\bq^{(k+1)})^{\HH}W^{(k)}\bq^{(k)}=1$ for all $k$, the lower bound in \eqref{eq:bnd1} only depends on {$\bp^{(k)}$ and $\bw^{(k)}$}. Based on this observation, we may define  a near-optimal Lawson exponent $\beta_*^{(k)}$ as the minimizer of the lower bound    \eqref{eq:bnd1}, i.e., 
\begin{equation}\label{eq:optimalbeta}
\beta_*^{(k)}=\argmin_{\beta\in \bbR} \zeta_\beta^2\gamma_\beta^2. 
\end{equation}
Recalling $r_j=|f_j-\xi^{(k)}(x_j)|$ and $${\cal I}_{\wtd \bw}=\{j|\wtd w_j>0, ~{1\le j\le m}\}=\{j|  w_jr_j>0, ~{1\le j\le m}\}$$ in Proposition \ref{prop:Iwk}, we have
\begin{equation}\label{eq:objbeta}
\nu(\beta):=\zeta_\beta^2\gamma_\beta^2=\left(\sum_{j\in {\cal I}_{\wtd \bw}}  w_j|q_j|^2r_j^{2-\beta}\right)\left(\sum_{j\in {\cal I}_{\wtd \bw}} w_j r_j^\beta\right).
\end{equation}
\begin{proposition}\label{prop:convex}
The function $\nu(\beta)$ given in \eqref{eq:objbeta} is convex.
\end{proposition}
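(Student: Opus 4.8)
The plan is to recognize $\nu(\beta)$ as a product of two \emph{log-convex} functions and to exploit the facts that log-convexity is preserved under products and implies ordinary convexity. Writing $t_j:=\ln r_j$ for $j\in{\cal I}_{\wtd\bw}$ (well defined, since $w_jr_j>0$ forces $r_j>0$ on this index set, and $({\bf A1})$ gives $q_j\ne 0$), I would first record $\nu(\beta)=A(\beta)B(\beta)$ with
$$
A(\beta)=\sum_{j\in{\cal I}_{\wtd\bw}}w_j|q_j|^2 r_j^{2}\,e^{-\beta t_j},\qquad B(\beta)=\sum_{j\in{\cal I}_{\wtd\bw}}w_j\,e^{\beta t_j},
$$
so that both $A$ and $B$ are finite sums of exponentials in $\beta$ with strictly positive coefficients. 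The point to stress is that one \emph{cannot} conclude convexity of $\nu$ merely from convexity of $A$ and $B$, since a product of convex functions need not be convex; this is exactly why the stronger log-convexity must be extracted.

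The next step is to show that each factor is log-convex. For $B$, I would compute $B'(\beta)=\sum_j w_j t_j e^{\beta t_j}$ and $B''(\beta)=\sum_j w_j t_j^2 e^{\beta t_j}$, and apply the Cauchy--Schwarz inequality to the vectors with entries $\sqrt{w_j}\,e^{\beta t_j/2}$ and $t_j\sqrt{w_j}\,e^{\beta t_j/2}$ to obtain $\big(B'(\beta)\big)^2\le B(\beta)B''(\beta)$. Since $B(\beta)>0$, this gives $(\ln B)''=\big(B''B-(B')^2\big)/B^2\ge 0$, i.e., $\ln B$ is convex. The identical argument, with $t_j$ replaced by $-t_j$ and the positive coefficients now being $w_j|q_j|^2 r_j^2$, shows that $\ln A$ is convex as well.

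Finally, from $\ln\nu(\beta)=\ln A(\beta)+\ln B(\beta)$ I would conclude that $\ln\nu$ is convex, hence $\nu$ is log-convex; and since $\nu=\exp(\ln\nu)$ is the composition of the convex increasing map $t\mapsto e^{t}$ with a convex function, $\nu$ itself is convex, which is the claim. The only genuine subtlety---and the main obstacle---is the observation at the end of the first paragraph: passing from the separate convexity of the two factors (which is immediate) to convexity of their product is false in general, and the whole argument hinges on upgrading to log-convexity through the Cauchy--Schwarz step. Care should also be taken that every coefficient is strictly positive on ${\cal I}_{\wtd\bw}$, so that the logarithms and the ratio defining $(\ln B)''$ are well defined.
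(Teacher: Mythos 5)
Your proof is correct, and it takes a genuinely different (and cleaner) route than the paper's. The paper proves convexity by brute force on the second derivative: writing $\nu=AB$ with $A(\beta)=\sum_{j\in{\cal I}_{\wtd\bw}}w_j|q_j|^2r_j^{2-\beta}$ and $B(\beta)=\sum_{j\in{\cal I}_{\wtd\bw}}w_jr_j^{\beta}$, it expands $\nu''=A''B+AB''+2A'B'$, bounds the cross term via $|2A'B'|\le 2\bigl(\sum_j w_j|q_j|^2r_j^{2-\beta}|\log r_j|\bigr)\bigl(\sum_j w_jr_j^{\beta}|\log r_j|\bigr)$, applies the AM--GM inequality $A''B+AB''\ge 2\sqrt{A''A\,B''B}$, and then uses Cauchy--Schwarz to match the two bounds and conclude $\nu''\ge 0$. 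You instead observe that $A$ and $B$ are positive combinations of exponentials in $\beta$, hence log-convex (your Cauchy--Schwarz step $B''B\ge (B')^2$ is exactly the same inequality the paper uses, just deployed on each factor separately rather than on the product), so $\ln\nu=\ln A+\ln B$ is convex and $\nu=e^{\ln\nu}$ is convex. What your route buys is conceptual economy: it dispenses with the AM--GM step and the sign bookkeeping for the cross term, and it isolates the real reason the product of these two convex factors is convex --- namely that each is log-convex, which (as you correctly stress) is the property that survives multiplication, unlike mere convexity. The paper's version buys only self-containedness at the level of elementary calculus. Your attention to the well-definedness of $\log r_j$ on ${\cal I}_{\wtd\bw}$ and to the strict positivity of the coefficients matches the implicit assumptions in the paper (Proposition \ref{prop:Iwk} and ({\bf A1})), so there is no gap.
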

\begin{proof}
First,  note that the derivative of $\nu(\beta)$ is 
\begin{align*}
\nu'(\beta)=(\zeta_\beta^2\gamma_\beta^2)'&=-\left(\sum_{j\in {\cal I}_{\wtd \bw}}   w_j|q_j|^2r_j^{2-\beta}\log r_j\right)\left(\sum_{j\in {\cal I}_{\wtd \bw}} w_j r_j^\beta\right)\\
&+\left(\sum_{j\in {\cal I}_{\wtd \bw}}  w_j|q_j|^2r_j^{2-\beta}\right) \left(\sum_{j\in {\cal I}_{\wtd \bw}}w_j r_j^{\beta}\log r_j\right).
\end{align*} 
Moreover, 
{\small 
\begin{align}\nonumber
&\nu''(\beta)\\\nonumber
=&\left(\sum_{j\in {\cal I}_{\wtd \bw}}  w_j|q_j|^2r_j^{2-\beta}(\log r_j)^2\right)\left(\sum_{j\in {\cal I}_{\wtd \bw}}w_j r_j^\beta\right)+\left(\sum_{j\in {\cal I}_{\wtd \bw}}   w_j|q_j|^2r_j^{2-\beta} \right) \left(\sum_{j\in {\cal I}_{\wtd \bw}} w_j r_j^{\beta}(\log r_j)^2\right)\\\nonumber
&-2\left(\sum_{j\in {\cal I}_{\wtd \bw}}  w_j|q_j|^2r_j^{2-\beta}\log r_j\right)\left(\sum_{j\in {\cal I}_{\wtd \bw}}w_j r_j^\beta\log r_j\right)\\\nonumber
\ge &2\sqrt{\left(\sum_{j\in {\cal I}_{\wtd \bw}}  w_j |q_j|^2r_j^{2-\beta}(\log r_j)^2\right)\left(\sum_{j\in {\cal I}_{\wtd \bw}}  w_j |q_j|^2r_j^{2-\beta} \right) \left(\sum_{j\in {\cal I}_{\wtd \bw}}w_j r_j^\beta\right)\left(\sum_{j\in {\cal I}_{\wtd \bw}}w_j r_j^{\beta}(\log r_j)^2\right)}\\\nonumber
& -2\left(\sum_{j\in {\cal I}_{\wtd \bw}}  w_j |q_j|^2r_j^{2-\beta}|\log r_j|\right)\left(\sum_{j\in {\cal I}_{\wtd \bw}} w_j r_j^\beta|\log r_j|\right)\\\label{eq:cauchyineq}
\ge& 2 {\left(\sum_{j\in {\cal I}_{\wtd \bw}}  w_j \left|q_j\right|^2r_j^{2-\beta}|\log r_j|\right)  \left(\sum_{j\in {\cal I}_{\wtd \bw}}w_j r_j^{\beta}|\log r_j|\right)}  \\\nonumber
&-2\left(\sum_{j\in {\cal I}_{\wtd \bw}}  w_j |q_j|^2r_j^{2-\beta}|\log r_j|\right)\left(\sum_{j\in {\cal I}_{\wtd \bw}} w_j r_j^\beta|\log r_j|\right)\\\nonumber
=&0,
\end{align}}
\hskip-1mm where the first inequality follows by using {$a^2+b^2\ge 2 {|ab|}$}, while the second is due to $\|\bu\|_2\|\bv\|_2\ge |\bu^{\T}\bv|$. 
This implies that $\nu(\beta)$ is convex.
\end{proof}
\begin{proposition}\label{prop:min}
For the polynomial minimax approximation problem, we have 
\begin{itemize}
\item [(i)]
$\beta=1$ is the   global minimizer of \eqref{eq:optimalbeta}. In this sense, $\beta=1$ achieves the maximum of the lower bound in \eqref{eq:bnd1} and can be viewed as the near-optimal Lawson exponent in Lawson's iteration;
\item[(ii)]  {for any $\beta\in [0,2]$}, the sequence of Lawson's iteration satisfies $d_2(\bw^{(k+1)})\ge d_2(\bw^{(k)})$.
\end{itemize}
 
\end{proposition}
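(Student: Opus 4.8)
The plan is to reduce everything to the single convex function $\nu(\beta)$ already introduced in \eqref{eq:objbeta}. In the polynomial case $n_2=0$ we have $q\equiv 1$, so $\bq^{(k)}=\bq^{(k+1)}=\be$, every $|q_j|^2=1$, and $(\bq^{(k+1)})^{\HH}W^{(k)}\bq^{(k)}=\be^{\HH}W^{(k)}\be=\sum_{j}w_j^{(k)}=1$. Hence the numerator of $\chi^{(k)}(\beta)$ in Theorem \ref{thm:bnd1} is independent of $\beta$ and $\chi^{(k)}(\beta)=1/(\gamma_\beta\zeta_\beta)=1/\sqrt{\nu(\beta)}$, with $\nu(\beta)=\big(\sum_{j\in{\cal I}_{\wtd\bw}}w_jr_j^{2-\beta}\big)\big(\sum_{j\in{\cal I}_{\wtd\bw}}w_jr_j^{\beta}\big)$. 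Thus maximizing the lower bound \eqref{eq:bnd1} over $\beta$ is exactly minimizing $\nu$, i.e.\ problem \eqref{eq:optimalbeta}.

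For part (i), I would use the convexity of $\nu$ from Proposition \ref{prop:convex} and simply locate a stationary point. Evaluating the displayed expression for $\nu'(\beta)$ at $\beta=1$, the exponents $2-\beta$ and $\beta$ coincide, so the two products cancel and $\nu'(1)=0$; by convexity $\beta=1$ is then the global minimizer, and because $\chi^{(k)}=1/\sqrt{\nu}$ it simultaneously maximizes the lower bound in \eqref{eq:bnd1}. (The substitution $\beta\mapsto 2-\beta$ shows $\nu(2-\beta)=\nu(\beta)$, so one may alternatively argue that a convex function symmetric about $\beta=1$ is minimized there.)

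For part (ii), the roadmap in Fig.\ \ref{fig:roadmap} indicates that it suffices to prove $\chi^{(k)}(\beta)\ge 1/\sqrt{d_2(\bw^{(k)})}$, since substituting this into \eqref{eq:bnd1} gives $\sqrt{d_2(\bw^{(k+1)})}\ge d_2(\bw^{(k)})/\sqrt{d_2(\bw^{(k)})}=\sqrt{d_2(\bw^{(k)})}$. Via $\chi^{(k)}=1/\sqrt{\nu}$ this is equivalent to $\nu(\beta)\le d_2(\bw^{(k)})$ for every $\beta\in[0,2]$. I would establish this in two steps. First, convexity together with the symmetry $\nu(\beta)=\nu(2-\beta)$ shows that $\nu$ attains its maximum over $[0,2]$ at the endpoints, so $\nu(\beta)\le\nu(0)=\big(\sum_{j\in{\cal I}_{\wtd\bw}}w_jr_j^2\big)\big(\sum_{j\in{\cal I}_{\wtd\bw}}w_j\big)$. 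Second, in the polynomial case $d_2(\bw^{(k)})$ is precisely the weighted least-squares residual $\sum_{j=1}^m w_j^{(k)}r_j^2=\sum_{j\in{\cal I}_{\wtd\bw}}w_jr_j^2$ (indices outside ${\cal I}_{\wtd\bw}$ contribute $0$). Since $\bw^{(k)}\in{\cal S}$ and ${\cal I}_{\wtd\bw}\subseteq\{1,\dots,m\}$ force $\sum_{j\in{\cal I}_{\wtd\bw}}w_j\le 1$, the second factor of $\nu(0)$ is at most one, hence $\nu(0)\le d_2(\bw^{(k)})$ and the desired monotonicity follows.

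The step I expect to carry the real weight is the reduction to the endpoint in part (ii): part (i) only locates the \emph{best} exponent $\beta=1$, whereas monotonicity must hold uniformly over the whole admissible range $[0,2]$, so the worst case rather than the optimal case must still beat $d_2(\bw^{(k)})$. The convexity-plus-symmetry argument is exactly what collapses this uniform requirement to the single endpoint $\beta=0$, after which the probability-simplex constraint $\sum_j w_j^{(k)}=1$ closes the gap with no further inequality needed. A minor point requiring care is the consistent restriction of all sums to ${\cal I}_{\wtd\bw}$ and the verification that $d_2(\bw^{(k)})$ really equals $\sum_{j\in{\cal I}_{\wtd\bw}}w_jr_j^2$ once degenerate indices are discarded.
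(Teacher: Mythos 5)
Your proposal is correct and follows essentially the same route as the paper: convexity of $\nu$ from Proposition \ref{prop:convex}, the stationarity $\nu'(1)=0$ (equivalently the symmetry $\nu(2-\beta)=\nu(\beta)$ valid when $|q_j|\equiv 1$) for part (i), and the endpoint comparison $\nu(\beta)\le \nu(0)\le d_2(\bw^{(k)})$ on $[0,2]$ for part (ii). Your handling of the endpoint is in fact marginally more careful than the paper's, which writes $d_2(\bw)=\nu(0)$ where strictly $\nu(0)=d_2(\bw)\cdot\sum_{j\in{\cal I}_{\wtd\bw}}w_j\le d_2(\bw)$; the inequality goes the right way either way, so the conclusion is unaffected.
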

\begin{proof}
For this special case of $\beta=1$ and $q\equiv 1$,  we only need to notice 
\begin{align*}
\nu'(1) &=-\left(\sum_{j\in {\cal I}_{\wtd \bw}} w_j r_j \log r_j\right)\left(\sum_{j\in {\cal I}_{\wtd \bw}} w_j r_j \right) +\left(\sum_{j\in {\cal I}_{\wtd \bw}}  w_j r_j \right) \left(\sum_{j\in {\cal I}_{\wtd \bw}}w_j r_j \log r_j\right)=0,
\end{align*} 
  and thus the conclusion (i) follows from Proposition \ref{prop:convex}. 

For (ii), by the convexity of $\nu(\beta)$ (i.e., $\nu''(\beta)\ge 0$) {and $${{d_2(\bw)}}=\nu(0)=\nu(2)\ge \nu(1)=\min_{\beta\in \bbR} \nu(\beta),$$ we know that  $\forall\beta\in [0,2]$, it holds that $\nu(\beta)\le {d_2(\bw)}$,} which, according to the lower bound of $d_2(\wtd \bw)$ in \eqref{eq:bnd1}, leads to $d_2(\wtd \bw)\ge d_2(\bw)$.  
\end{proof}

Proposition \ref{prop:min} establishes the monotonic convergence and also locally the near-optimal choice of the Lawson exponent at each iteration. {When $\beta = 1$, the linear convergence of Lawson's iteration was established in \cite{clin:1972,elwi:1976}. Additionally, remedies for cases where assumption ({\bf A2}) fails were  discussed in \cite[Chapter 13]{rice:1969}, while extensions of Lawson's iteration for computing a minimax approximation in a general functional subspace spanned by a Chebyshev set were proposed in \cite{rius:1968}.}

\section{Monotonic convergence and complementary slackness for the rational minimax approximation problem}\label{sec:beta_Lawson_rational}
We now consider the convergence of {\tt d-Lawson} for the rational minimax approximation problem. Different from the linear  case, a difficulty arises from the numerator  $(\bq^{(k+1)})^{\HH}W^{(k)}\bq^{(k)}$ of the lower bound \eqref{eq:bnd1}, in which $\bq^{(k+1)}$ is also dependent on the Lawson exponent $\beta$. Explicitly expressing $\bq^{(k+1)}$ in terms of $\beta$ in general is impossible because $\bq^{(k+1)}$ is related with an eigenvector of the matrix pencil $(A_{\bw},B_{\bw})$. However, locally around $\beta=0$, it is possible to analyze the term $(\bq^{(k+1)})^{\HH}W^{(k)}\bq^{(k)}$, and therefore, the lower bound in \eqref{eq:bnd1} with respect to $\beta$. Based on this observation, we can conclude that, generically, for any sufficiently small $\beta$, the monotonic property $d_2(\bw^{(k+1)})\ge d_2(\bw^{(k)})$ holds. This convergence result is consistent with the numerical experiments of {the} AAA-Lawson iteration where it is observed  \cite{fint:2018} that ``{\it taking $\beta$ to be smaller makes the algorithm much more robust}". 
 
 To develop our convergence, we need the following lemma.
\begin{lemma}\label{lem:inequality}
Given $\bs=[s_1,\dots,s_m]^{\T}\in {\cal S}$, let $t(x)$ and $h(x)$ be strictly monotonically increasing on the interval $(a,b)$. Then for any $m$ points $x_j\in (a,b),~1\le j \le m$, we have
\begin{equation}\label{eq:inequality}
\left(\sum_{j=1}^ms_j t(x_j)\right)\left(\sum_{j=1}^ms_j h(x_j)\right)\le \sum_{j=1}^ms_j t(x_j) h(x_j);
\end{equation}
moreover, the equality in \eqref{eq:inequality} holds if and only if   $s_is_j(x_i -x_j)=0$ for all $1\le i,j\le m$.
\end{lemma}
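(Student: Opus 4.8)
The plan is to recognize the claimed inequality as a weighted form of \emph{Chebyshev's sum inequality} and to prove it through a single symmetric ``doubling'' identity, which will simultaneously hand us the equality characterization. The central object is the double sum
\begin{equation}\nonumber
D:=\sum_{i=1}^m\sum_{j=1}^m s_is_j\bigl(t(x_i)-t(x_j)\bigr)\bigl(h(x_i)-h(x_j)\bigr).
\end{equation}

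First I would expand $D$ and collect the four resulting terms. Using only $\bs\in{\cal S}$, that is $\sum_{j=1}^m s_j=1$, the two diagonal-type terms $\sum_{i,j}s_is_j t(x_i)h(x_i)$ and $\sum_{i,j}s_is_jt(x_j)h(x_j)$ each collapse to $\sum_{j=1}^m s_j t(x_j)h(x_j)$, while the two cross terms each factor as $\bigl(\sum_i s_it(x_i)\bigr)\bigl(\sum_j s_jh(x_j)\bigr)$. This yields the identity
\begin{equation}\nonumber
D=2\sum_{j=1}^m s_jt(x_j)h(x_j)-2\Bigl(\sum_{j=1}^m s_jt(x_j)\Bigr)\Bigl(\sum_{j=1}^m s_jh(x_j)\Bigr),
\end{equation}
so that \eqref{eq:inequality} is equivalent to the single assertion $D\ge 0$.

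Next I would establish $D\ge 0$ term by term. For each pair $(i,j)$ the weights satisfy $s_is_j\ge 0$, and because $t$ and $h$ are both strictly increasing, the two factors $t(x_i)-t(x_j)$ and $h(x_i)-h(x_j)$ carry the same sign as $x_i-x_j$; hence their product is $\ge 0$, with strict positivity exactly when $x_i\ne x_j$. Every summand of $D$ is therefore nonnegative, which proves the inequality.

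Finally, for the equality case I would read off from the term-wise nonnegativity that $D=0$ holds if and only if every summand vanishes, i.e. $s_is_j\bigl(t(x_i)-t(x_j)\bigr)\bigl(h(x_i)-h(x_j)\bigr)=0$ for all $i,j$. By strict monotonicity, $\bigl(t(x_i)-t(x_j)\bigr)\bigl(h(x_i)-h(x_j)\bigr)=0$ precisely when $x_i=x_j$, so each summand vanishes exactly when $s_is_j=0$ or $x_i=x_j$, which is the same as $s_is_j(x_i-x_j)=0$. This matches the stated condition. The computation is elementary throughout; the only point requiring genuine care is the equality analysis, where one must invoke that it is \emph{strict} monotonicity that forces the sign agreement between $t(x_i)-t(x_j)$, $h(x_i)-h(x_j)$, and $x_i-x_j$, and hence identifies the vanishing of the mixed product exactly with $x_i=x_j$.
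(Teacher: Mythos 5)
Your proof is correct and is essentially the same argument as the paper's: the paper establishes the pairwise inequality $t(x_i)h(x_j)+t(x_j)h(x_i)\le t(x_i)h(x_i)+t(x_j)h(x_j)$ (equivalently, the nonnegativity of your summand $(t(x_i)-t(x_j))(h(x_i)-h(x_j))$) and then expands the product of sums over pairs $i<j$ using $\sum_j s_j=1$, which is just your doubling identity for $D$ written out over the upper triangle. The equality characterization is obtained in both cases by noting that each pairwise term vanishes exactly when $s_is_j(x_i-x_j)=0$.
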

\begin{proof}
The result relies on the following inequality:
$$
t(x_i)h(x_j)+t(x_j)h(x_i)\le t(x_i)h(x_i)+t(x_j)h(x_j),~~\forall 1\le i, j \le m.
$$
In fact, the above is equivalent to 
$
(t(x_i)-t(x_j))(h(x_j)-h(x_i))\le 0
$
which is true by assumptions on $t(x)$ and $h(x)$. The equality holds if and only if   $x_j=x_i$. Thus, 
we have 
\begin{align*}
&\left(\sum_{j=1}^ms_j t(x_j)\right)\left(\sum_{j=1}^ms_j h(x_j)\right)\\
=&\sum_{j=1}^ms_j^2 t(x_j)h(x_j)+\sum_{1\le i<j\le n}s_is_j \left(t(x_i)h(x_j)+t(x_j)h(x_i)\right)\\
\le& \sum_{j=1}^ms_j^2 t(x_j)h(x_j)+\sum_{1\le i<j\le n}s_is_j \left(t(x_i)h(x_i)+t(x_j)h(x_j)\right)\\
=&\sum_{j=1}^ms_j(s_1+\dots+s_m)t(x_j)h(x_j)=\sum_{j=1}^ms_jt(x_j)h(x_j),
\end{align*}
and the equality holds if and only if $s_is_j(x_i -x_j)=0$ for all $1\le i,j\le m$. 
\end{proof}

\begin{theorem}\label{thm:mont-rational} 
At the $k$th step of {\tt d-Lawson} (Algorithm \ref{alg:Lawson}), for $\bw^{(k)}\in {\cal S}$, assume $d_2(\bw^{(k)})$ is a simple eigenvalue of the matrix pencil $(A_{\bw^{(k)}},B_{\bw^{(k)}})$ given in \eqref{eq:dual_GEP}, and  {\rm ({\bf A1})} and {\rm ({\bf A2})} hold. {Suppose $r_j^{(k)}:=|f_j-\xi^{(k)}(x_j)|>0,~\forall j\in {\cal I}_{\bw^{(k)}}:=\{j|w_j^{(k)}\ne 0, ~{1\le j\le m}\}$.} Then 
\begin{itemize}
\item[(i)]  
there is a $\beta_0>0$ so that for any $\beta\in (0,\beta_0)$, {\tt d-Lawson} gives $d_2(\bw^{(k+1)})\ge d_2(\bw^{(k)})$;
\item[(ii)] for any sufficiently small $\beta>0$, 
\begin{equation}\label{eq:complement}
d_2(\bw^{(k+1)})= d_2(\bw^{(k)})\Longrightarrow w_j^{(k)}r_j^{(k)}\left(r_j^{(k)}-c\right)=0,~ \forall 1\le j\le m,
\end{equation} 
where  ${c=\sqrt{d_2(\bw^{(k)})}}\le \|\Bf-\xi^{(k)}(\bx)\|_{\infty};$
\item[(iii)] {for the item (ii),}   if additionally, $c\ge \max_{j\not\in{\cal I}_{\bw^{(k)}}}r_j^{(k)}$ and $\xi^{(k)}$ is irreducible, then $\xi^{(k)}$ is the minimax approximant of \eqref{eq:bestf0}. 
\end{itemize}
\end{theorem}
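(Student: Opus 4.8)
The plan is to read everything off the lower bound \eqref{eq:bnd1} of Theorem \ref{thm:bnd1} by analysing its behaviour as $\beta\to 0^+$. First I would simplify the denominator of $\chi^{(k)}(\beta)$. Writing $r_j:=r_j^{(k)}$ and using $w_j^{(k+1)}=w_j^{(k)}r_j^{\beta}/\gamma_\beta$ together with $|f_jq_j^{(k)}-p_j^{(k)}|=|q_j^{(k)}|\,r_j$, a direct computation collapses $\gamma_\beta^2\zeta_\beta^2$ to the function
\[
\nu(\beta)=\Big(\sum_j w_j^{(k)}r_j^{\beta}\Big)\Big(\sum_j w_j^{(k)}|q_j^{(k)}|^2 r_j^{2-\beta}\Big),
\]
that is, exactly \eqref{eq:objbeta}. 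Hence $\chi^{(k)}(\beta)=|N(\beta)|/\sqrt{\nu(\beta)}$ with $N(\beta):=(\bq^{(k+1)})^{\HH}W^{(k)}\bq^{(k)}$, and by \eqref{eq:bnd1} it suffices to prove $\chi^{(k)}(\beta)\ge 1/\sqrt{d_2(\bw^{(k)})}$, equivalently
\[
G(\beta):=d_2(\bw^{(k)})\,|N(\beta)|^2-\nu(\beta)\ge 0
\]
for all sufficiently small $\beta>0$; indeed this yields $\sqrt{d_2(\bw^{(k+1)})}\ge d_2(\bw^{(k)})\chi^{(k)}(\beta)\ge\sqrt{d_2(\bw^{(k)})}$.

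Next I would expand $G$ to first order at $\beta=0$. Since the support is frozen, i.e. ${\cal I}_{\bw^{(k+1)}}={\cal I}_{\bw^{(k)}}$ for small $\beta$ (Proposition \ref{prop:Iwk} with $r_j>0$ on the support), the weights $w_j^{(k+1)}(\beta)$ and hence the reduced Hermitian eigenproblem of Proposition \ref{prop:dual_GEP}(iii) are analytic in $\beta$; simplicity of $d_2(\bw^{(k)})$ then furnishes an analytic eigenvector branch $\bq^{(k+1)}(\beta)$ with $\bq^{(k+1)}(0)=\bq^{(k)}$ and derivative $\dot{\bq}$. Clearly $G(0)=0$ and $N(0)=1$. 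The key observation is that I never need $\dot{\bq}$ itself: differentiating the normalisation $(\bq^{(k+1)})^{\HH}W^{(k+1)}\bq^{(k+1)}=1$ at $\beta=0$ and using $\sum_jw_j^{(k)}|q_j^{(k)}|^2=1$ pins down
\[
2\,{\rm Re}\big(\dot{\bq}^{\HH}W^{(k)}\bq^{(k)}\big)=L-Q,\quad L:=\sum_j w_j^{(k)}\log r_j,\ \ Q:=\sum_j w_j^{(k)}|q_j^{(k)}|^2\log r_j.
\]
Combining this with $\nu'(0)=-P+d_2(\bw^{(k)})L$, where $P:=\sum_j w_j^{(k)}|q_j^{(k)}|^2 r_j^2\log r_j$, the $L$-terms cancel and I obtain the clean expression $G'(0)=P-d_2(\bw^{(k)})Q$.

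The final step is to recognise this as the content of Lemma \ref{lem:inequality}. Setting $s_j:=w_j^{(k)}|q_j^{(k)}|^2\ge 0$, the normalisation $\sum_j w_j^{(k)}|q_j^{(k)}|^2=1$ gives $\bs\in{\cal S}$ and $d_2(\bw^{(k)})=\sum_j s_jr_j^2$, so that
\[
G'(0)=\sum_j s_j r_j^2\log r_j-\Big(\sum_j s_j r_j^2\Big)\Big(\sum_j s_j\log r_j\Big)\ge 0
\]
by Lemma \ref{lem:inequality} applied with the strictly increasing functions $t(r)=r^2$ and $h(r)=\log r$ on $(0,\infty)$, with equality iff $s_is_j(r_i-r_j)=0$ for all $i,j$, i.e. iff all errors on ${\cal I}_{\bw^{(k)}}$ coincide. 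In the non-degenerate case $G'(0)>0$ forces $G(\beta)>0$ for small $\beta>0$, hence $d_2(\bw^{(k+1)})>d_2(\bw^{(k)})$; in the degenerate case all support errors equal a common value $c'$, whereupon the Lawson update \eqref{eq:lawson} returns $\bw^{(k+1)}=\bw^{(k)}$ and $d_2(\bw^{(k+1)})=d_2(\bw^{(k)})$. This proves (i). For (ii), for sufficiently small $\beta>0$ the equality $d_2(\bw^{(k+1)})=d_2(\bw^{(k)})$ can occur only in the degenerate case, so every $r_j$ with $w_j^{(k)}>0$ equals $c'$; then $d_2(\bw^{(k)})=(c')^2\sum_j s_j=(c')^2$ forces $c'=\sqrt{d_2(\bw^{(k)})}=:c$, which is precisely the complementary slackness \eqref{eq:complement}, and $c=\sqrt{\sum_j s_jr_j^2}\le\|\Bf-\xi^{(k)}(\bx)\|_\infty$. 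For (iii), the extra hypothesis $c\ge\max_{j\notin{\cal I}_{\bw^{(k)}}}r_j$ together with $r_j\equiv c$ on ${\cal I}_{\bw^{(k)}}$ gives $\|\Bf-\xi^{(k)}(\bx)\|_\infty=c=\sqrt{d_2(\bw^{(k)})}$, which is \eqref{eq:strongdual}; by weak duality \eqref{eq:weakduality} this makes $\bw^{(k)}$ a maximiser of \eqref{eq:dual}, so Theorem \ref{thm:q-dual} applies to the irreducible $\xi^{(k)}$ and identifies it as the minimax approximant of \eqref{eq:bestf0}.

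\emph{Main obstacle.} The delicate part is the first-order perturbation of the $\beta$-dependent eigenvector $\bq^{(k+1)}$: I must justify an analytic branch (frozen support and ranks, simplicity of the smallest eigenvalue) and, crucially, extract the single real quantity ${\rm Re}(\dot{\bq}^{\HH}W^{(k)}\bq^{(k)})$ from the normalisation constraint alone, since no explicit formula for $\dot{\bq}$ is available. Once $G'(0)$ is reduced to $P-d_2(\bw^{(k)})Q$, the monotonicity, the complementary slackness, and the characterisation of the equality case all follow from Lemma \ref{lem:inequality} and its equality condition.
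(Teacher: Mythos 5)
Your proposal is correct and follows essentially the same route as the paper's proof: it rests on the lower bound of Theorem \ref{thm:bnd1}, a first-order expansion of the $\beta$-dependent eigenvector branch at $\beta=0$ (justified by the frozen support, the simple smallest eigenvalue of the reduced Hermitian problem in Proposition \ref{prop:dual_GEP}(iii), and the extraction of ${\rm Re}\bigl(\dot{\bq}^{\HH}W^{(k)}\bq^{(k)}\bigr)$ from the normalisation constraint), and finally Lemma \ref{lem:inequality} with $s_j=w_j^{(k)}|q_j^{(k)}|^2$, $t(x)=x^2$, $h(x)=\log x$, producing the same first-order quantity $P-d_2(\bw^{(k)})Q$ that the paper obtains as $\ell'(0)$ up to a positive factor. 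Your packaging via $G(\beta)=d_2(\bw^{(k)})|N(\beta)|^2-\nu(\beta)$ in place of the paper's $\widehat\ell(\beta)$, and your direct remark that in the degenerate case the Lawson update is the identity map, are only cosmetic variants of the paper's argument.
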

\begin{proof}
For simplicity, we adopt the notation in the proof of Theorem \ref{thm:bnd1} by omitting the superscripts, and  we assume  that $d_2(\bw)>0$ (the conclusions are trivial when $d_2(\bw)=0$ as $w_j r_j=0,~ \forall j =1,2,\dots,m$). 
The idea for the proof is to express  and estimate the lower bound in \eqref{eq:bnd1} using  the real parameter  $\beta$ around $\beta=0$. 

Define  $W(\beta)=\diag(w_1(\beta),\dots,w_m(\beta))$ with
\begin{align*}
w_j(\beta)=\frac{w_j r_j^\beta}{\gamma_\beta}=\frac{w_j |f_j-\xi(x_j)|^\beta}{\sum_{j=1}^mw_j  |f_j-\xi(x_j)|^\beta}
\end{align*}
for which we have $w_j(0)=w_j$ and 
$$
w_j'(0)=w_j \log r_j - w_j \sum_{i=1}^mw_i\log r_i, \quad \forall j\in {\cal I}_{\wtd \bw}.
$$

In (iii) of Proposition \ref{prop:dual_GEP}, based on assumptions ({\bf A1}) and ({\bf A2}), it is true that the matrices $Q_p(\beta), R_p(\beta), Q_q(\beta)$ and $R_q(\beta)$  in QR factorizations of  $\sqrt{W(\beta)}\Phi=Q_q(\beta)R_q(\beta)$ and $\sqrt{W(\beta)}\Psi=Q_p(\beta)R_p(\beta)$,  are all locally differentiable for sufficiently small $\beta$. Moreover, as $d_2(\bw(0))$ is a simple eigenvalue of the matrix pencil $(A_{\bw(0)},B_{\bw(0)})$ given in \eqref{eq:dual_GEP}, by \eqref{eq:qeig},  it is also a simple eigenvalue of the Hermitian matrix $S_{F}(\beta)-S_{qp}(\beta)S_{qp}(\beta)^{\HH}$ at $\beta=0$. Thus, the continuity of eigenvalues implies that the smallest eigenvalue $d_2(\bw(\beta))$ is a simple eigenvalue of  the Hermitian $S_{F}(\beta)-S_{qp}(\beta)S_{qp}(\beta)^{\HH}$  for any sufficiently small $\beta\in\bbR$. Alternatively, we can say that the  eigenspace spanned by the unit-norm eigenvector  $R_q(\beta)\bb(\bw(\beta))$ of $S_{F}(\beta)-S_{qp}(\beta)S_{qp}(\beta)^{\HH}$ corresponding to $d_2(\bw(\beta))$ is one-dimensional. Based on \cite[Chapter 2.6.2]{kato:1966}, there is a continuously differentiable normalized eigenvector $R_q(\beta)\bb(\bw(\beta))$ with respect to $\beta$ around $\beta=0$. Moreover, noting that  $\bq(\beta)=\Phi \bb(\beta)=\Phi (R_q(\beta))^{-1}R_q(\beta)\bb(\beta)$, and also that  $|\bq(\beta)^{\HH}W(0)\bq(0)|$ does not change for different choices of a unit-norm eigenvector $R_q(\beta)\bb(\bw(\beta))$, in the following discussion, we can assume $\bq(\beta)$ is continuously differentiable with respect to $\beta$ around $\beta=0$. Hence,   
$$
\bq(\beta)=\bq(0)+\beta \bq'(0)+O(\beta^2).
$$
Since $$
1=\bq(\beta)^{\HH}W(\beta)\bq(\beta)\Longrightarrow {\rm Re }(\bq'(0)^{\HH}W(0)\bq(0))=-\frac12\bq(0)^{\HH}W'(0)\bq(0),
$$
for any sufficiently small $\beta\in \bbR$, we have 
\begin{align*}
&\left|\bq(\beta)^{\HH}W(0)\bq(0)\right|\\
&\ge\left|{\rm Re }(\bq(\beta)^{\HH}W(0)\bq(0))\right|\\
&=\left|\bq(0)^{\HH}W(0)\bq(0)+\beta {\rm Re }(\bq'(0)^{\HH}W(0)\bq(0))\right|+O(\beta^2)\\
&=\left|1+\beta {\rm Re }(\bq'(0)^{\HH}W(0)\bq(0))\right|+O(\beta^2)\\
&=\left|1-\frac{\beta}{2} \bq(0)^{\HH}W'(0)\bq(0)\right| +O(\beta^2)\\
&=1-\frac{\beta}{2}\left(\sum_{j\in {\cal I}_{\wtd \bw}} |q_j|^2w_j \log r_j - \left(\sum_{j\in {\cal I}_{\wtd \bw}} |q_j|^2w_j\right) \left(\sum_{j\in {\cal I}_{\wtd \bw}}w_j\log r_j\right)\right)+O(\beta^2)\\
&=1-\frac{\beta}{2}\left(\sum_{j\in {\cal I}_{\wtd \bw}} |q_j|^2w_j \log r_j -   \sum_{j\in {\cal I}_{\wtd \bw}}w_j\log r_j \right)+O(\beta^2).
\end{align*} 

With this, we can write the lower bound  in \eqref{eq:bnd1} as 
\begin{align*}
\widehat\ell(\beta):&=\frac{|\bq(\beta)^{\HH}W(0)\bq(0)|}{\gamma_{\beta}~\zeta_{\beta}}\\
&\ge\underbrace{\frac{1-\frac{\beta}{2}\left(\sum_{j\in {\cal I}_{\wtd \bw}} |q_j|^2w_j \log r_j -   \sum_{j\in {\cal I}_{\wtd \bw}}w_j\log r_j \right)}{\sqrt{\left(\sum_{j\in {\cal I}_{\wtd \bw}}  w_j|q_j|^2r_j^{2-\beta}\right)\left(\sum_{j\in {\cal I}_{\wtd \bw}}w_j r_j^\beta\right)}}}_{=:\ell(\beta)}+O(\beta^2)\\
&=:\ell(\beta)+O(\beta^2)
\end{align*}
locally at $\beta=0$. 

For $\ell(\beta)$,  by calculation, we have (with $q_j=q_j(0),~r_j=r_j(0)$ and $w_j=w_j(0)$)
\begin{align*}
&\ell'(0)\\
=&\frac{1}{2\sqrt{(d_2(\bw(0)))^3}}\left[\sum_{j\in {\cal I}_{\wtd \bw}}w_j |q_j |^2r_j^2 \log r_j  -\left(\sum_{j\in {\cal I}_{\wtd \bw}}w_j |q_j  |^2  r_j ^2\right) \left(\sum_{j\in {\cal I}_{\wtd \bw}}w_j |q_j |^2\log r_j\right)\right]\\
\ge& 0
\end{align*} 
where the last inequality is obtained by applying Lemma \ref{lem:inequality} with $s_j=w_j |q_j  |^2$, $x_j=r_j$, $t(x)=x^2$  and $h(x)=\log  x$ on the interval $(0,\infty)$. Furthermore, if there is a pair $(i,j)$ so that ${i\in {\cal I}_{\wtd \bw}}$, ${j\in {\cal I}_{\wtd \bw}}$ and  $w_iw_j(r_i-r_j)\ne 0$, then we have $\ell'(0)>0$ by Lemma \ref{lem:inequality}. In that case, we know that there is a $\beta_0>0$ such that $\ell'(\beta)>\frac12\ell'(0)$ and $\frac{\beta}{2}\ell'(0)+O(\beta^2)> 0$ for any $\beta\in (0,\beta_0)$, implying 
$$
\widehat\ell(\beta)=\widehat\ell(0)+\int_0^\beta \widehat\ell'(t)dt+O(\beta^2)\ge \widehat\ell(0)+\frac{\beta}{2}\ell'(0)+O(\beta^2)>\widehat\ell(0).
$$
This shows that, if there is a pair $(i,j)$ so that ${i,j\in {\cal I}_{\wtd \bw}}$ and $w_iw_j(r_i-r_j)\ne 0$, then a sufficiently small $\beta>0$ leads to $d_2(\bw(\beta))> d_2(\bw(0))$; conversely, for a sufficiently small $\beta>0$,
\begin{align}\nonumber
d_2(\bw(\beta))= d_2(\bw(0))&\Longrightarrow w_jw_i(r_i-r_j)=0,~\forall    {i,j\in {\cal I}_{\wtd \bw}}\\\label{eq:equcase}
&\Longrightarrow r_j={\rm a~constant~} c,~\forall    {j\in {\cal I}_{\wtd \bw}}.
\end{align}
Observe that $c\le \max_{1\le j\le m}r_j=\|\Bf-\xi(\bx)\|_{\infty}$ and also
\begin{equation}\label{eq:lowbdc}
d_2(\bw(0))=\sum_{j=1}^mw_j |q_jr_j|^2=\sum_{j\in {\cal I}_{\wtd \bw}}w_j|r_j|^2 |q_j|^2=c^2\sum_{j\in {\cal I}_{\wtd \bw}}w_j |q_j|^2{=} c^2\sum_{j=1}^mw_j |q_j|^2=c^2,
\end{equation}  
giving ${\sqrt{d_2(\bw(0))}= c}\le  \|\Bf-\xi(\bx)\|_{\infty}.$ This proves items  (i) and (ii). 

{For (iii), if we additionally}  have $c\ge \max_{j\not\in{\cal I}_{\bw}}r_j$, then $c=\sqrt{d_2(\bw(0))}=\|\Bf-\xi(\bx)\|_{\infty}$, which, according to Theorem \ref{thm:q-dual} (\cite[Theorem 4.3]{zhyy:2023}), implies that Ruttan's sufficient condition (or, equivalently,  strong duality) for the minimax approximation problem is satisfied, and $\xi^{(k)}$ is the minimax approximant of \eqref{eq:bestf}. In this case, the result in \eqref{eq:complement} can be written as 
$$
w_j\left(r_j-\|\Bf-\xi(\bx)\|_{\infty}\right)=0,~  {1\le j\le m},
$$
which is the complementary slackness property in \eqref{eq:complementopt}.
\end{proof}

\begin{remark}
\item[(a)] We remark first that the conclusion $w_j^{(k)}r_j^{(k)}\left(r_j^{(k)}-c\right)=0~ {\forall j=1,2,\dots, m}$ in \eqref{eq:complement}  is a certain complementary slackness. Indeed, it says that for node $x_j$ with $w_j^{(k)}>0$,  either $r_j^{(k)}=0$ or $r_j^{(k)}=c$; that is, any node associated with positive weight  either is an  interpolation point or has the error $c$.

\item[(b)] 
According to our proof for Theorem \ref{thm:mont-rational}, at every iteration, the Lawson exponent  $\beta_0>0$ in Theorem \ref{thm:mont-rational} is dependent on how far the continuously differentiable normalized eigenvector $R_q(\beta)\bb(\bw(\beta))$ of the Hermitian matrix $S_{F}(\beta)-S_{qp}(\beta)S_{qp}(\beta)^{\HH}$  can be extended from $\beta=0$. Explicit formulation for $\beta_0$ is hard, but intuitively, the larger {the} gap between the smallest eigenvalue $d_2(\bw)$ and the next eigenvalue is, the larger $\beta_0$ is. Currently, the proof does not guarantee   $\beta_0= 1$, which, nevertheless, performs always  well and is a recommended value  in practice \cite{zhyy:2023}.
\end{remark}
 
\section{Conclusions}\label{sec_conclude} 
 In this paper, we have established theoretical guarantees for Lawson's iteration in solving both the linear and rational minimax approximation problems.  For the rational minimax approximation problem, our results indicate that, generically, a small Lawson exponent $\beta>0$ leads to the monotonic convergence,  and also reveal some interesting properties of {\tt d-Lawson}. These theoretical guarantees, on the one hand, explain some numerical behaviors (for example,    ``{\it taking $\beta$ to be smaller makes the algorithm much more robust}" \cite{fint:2018}), and on the other hand, provide more insights on why and how Lawson's updating scheme works. These theoretical results lay a solid ground for this version {\tt d-Lawson} of Lawson's iteration.

 \section*{Acknowledgments} {We express our gratitude to the anonymous referees for their comments and questions, which have significantly contributed to enhancing our manuscript in several ways: 1) motivating the addition of a new subsection \ref{subsec:convg-framework} to more effectively elucidate intricate technical details and clarify the   novelty of our theoretical analysis; 2) prompting us  to conduct a more in-depth reflection on the relationship between  \eqref{eq:bestf0} and \eqref{eq:linearity}; and 3) improving the overall presentation and clarity of the paper.} We also thank Ren-Cang Li for the discussions and helpful suggestions, including a general version of Lemma \ref{lem:inequality}.
  
\def\noopsort#1{}\def\l{\char32l}\def\v#1{{\accent20 #1}}
  \let\^^_=\v\def\hbk{hardback}\def\pbk{paperback}
\providecommand{\href}[2]{#2}
\providecommand{\arxiv}[1]{\href{http://arxiv.org/abs/#1}{arXiv:#1}}
\providecommand{\url}[1]{\texttt{#1}}
\providecommand{\urlprefix}{URL }


\begin{thebibliography}{10}
{ 
\bibitem{adsr:1997}
\newblock R.~S. Adve, T.~K. Sarkar, S.~M. Rao, E.~K. Miller and D.~R. Pflug,
\newblock Application of the {C}auchy method for extrapolating/interpolating
  narrow-band system responses,
\newblock \emph{IEEE Trans. Microw. Theory Tech.}, \textbf{45} (1997),
  837--845.

\bibitem{anbg:2020}
\newblock A.~C. Antoulas, C.~A. Beattie and S.~G{\"u}{\u g}ercin,
\newblock \emph{Interpolatory Methods for Model Reduction},
\newblock SIAM, Philadelphia, PA, 2020,
\newblock
  \urlprefix\url{https://epubs.siam.org/doi/abs/10.1137/1.9781611976083}.

\bibitem{bama:1970}
\newblock I.~Barrodale and J.~Mason,
\newblock Two simple algorithms for discrete rational approximation,
\newblock \emph{Math. Comp.}, \textbf{24} (1970), 877--891.

\bibitem{brnt:2021}
\newblock P.~D. Brubeck, Y.~Nakatsukasa and L.~N. Trefethen,
\newblock Vandermonde with {A}rnoldi,
\newblock \emph{SIAM Rev.}, \textbf{63} (2021), 405--415.

\bibitem{brtr:2022}
\newblock P.~D. Brubeck and L.~N. Trefethen,
\newblock Lightning {S}tokes solver,
\newblock \emph{SIAM J. Sci. Comput.}, \textbf{44} (2022), A1205--A1226.

\bibitem{clin:1972}
\newblock A.~K. Cline,
\newblock Rate of convergence of {L}awson's algorithm,
\newblock \emph{Math. Comp.}, \textbf{26} (1972), 167--176.

\bibitem{coop:2007}
\newblock P.~Cooper,
\newblock \emph{Rational approximation of discrete data with asymptotic
  behaviour},
\newblock PhD thesis, University of Huddersfield, 2007.

\bibitem{cotr:2023}
\newblock S.~Costa and L.~N. Trefethen,
\newblock {AAA}-least squares rational approximation and solution of {L}aplace
  problems,
\newblock \emph{European Congr. Math.}, (2023), 511--534.

\bibitem{dajm:2023}
\newblock L.~Davis, W.~Johns, L.~Monz\'{o}n and M.~Reynolds,
\newblock Iterative stability enforcement in adaptive {Antoulas--Anderson}
  algorithms for $\mathcal{H}_2$ model reduction,
\newblock \emph{SIAM J. Sci. Comput.}, \textbf{45} (2023), A1844--A1861,
\newblock \urlprefix\url{https://doi.org/10.1137/21M1467043}.

\bibitem{depp:2023}
\newblock N.~Derevianko, G.~Plonka and M.~Petz,
\newblock From {ESPRIT} to {ESPIRA}: estimation of signal parameters by
  iterative rational approximation,
\newblock \emph{IMA J. Numer. Anal.}, \textbf{43} (2023), 789--827.

\bibitem{drnt:2024}
\newblock T.~A. Driscoll, Y.~Nakatsukasa and L.~N. Trefethen,
\newblock {AAA} rational approximation on a continuum,
\newblock \emph{SIAM J. Sci. Comput.}, \textbf{46} (2024), A929--A952.

\bibitem{elwi:1976}
\newblock S.~Ellacott and J.~Williams,
\newblock Linear {C}hebyshev approximation in the complex plane using
  {L}awson's algorithm,
\newblock \emph{Math. Comp.}, \textbf{30} (1976), 35--44.

\bibitem{elli:1978}
\newblock G.~H. Elliott,
\newblock \emph{The construction of {Chebyshev} approximations in the complex
  plane},
\newblock PhD thesis, Faculty of Science (Mathematics), University of London,
  1978.

\bibitem{fint:2018}
\newblock S.-I. Filip, Y.~Nakatsukasa, L.~N. Trefethen and B.~Beckermann,
\newblock Rational minimax approximation via adaptive barycentric
  representations,
\newblock \emph{SIAM J. Sci. Comput.}, \textbf{40} (2018), A2427--A2455.

\bibitem{gass:2002}
\newblock A.~Garc\'{\i}a-Lamparez, T.~K. Sarkar and M.~Salazar-Palma,
\newblock Robust computation and modelling of wideband system responses using
  the {C}auchy method,
\newblock in \emph{IEEE AP-S Int. Symp. Jun. 16--21}, 2002,
\newblock 720--723.

\bibitem{gass:2004}
\newblock A.~Garc{\'\i}a-Lamp\'erez, T.~K. Sarkar and M.~Salazar-Palma,
\newblock Generation of accurate rational models of lossy systems using the
  {C}auchy method,
\newblock \emph{IEEE Microw. Wireless Compon. Lett.}, \textbf{14} (2004),
  490--492.

\bibitem{gotr:2019b}
\newblock A.~Gopal and L.~N. Trefethen,
\newblock {New Laplace and Helmholtz solvers},
\newblock \emph{Proc. Natl. Acad. Sci. USA}, \textbf{116} (2019), 10223.

\bibitem{gotr:2019a}
\newblock A.~Gopal and L.~N. Trefethen,
\newblock Representation of conformal maps by rational functions,
\newblock \emph{Numer. Math.}, \textbf{142} (2019), 359--382.

\bibitem{gotr:2019}
\newblock A.~Gopal and L.~N. Trefethen,
\newblock Solving {L}aplace problems with corner singularities via rational
  functions,
\newblock \emph{SIAM J. Numer. Anal.}, \textbf{57} (2019), 2074--2094,
\newblock \urlprefix\url{https://doi.org/10.1137/19M125947X}.

\bibitem{gogu:2021}
\newblock I.~V. Gosea and S.~G\"{u}ttel,
\newblock Algorithms for the rational approximation of matrix-valued functions,
\newblock \emph{SIAM J. Sci. Comput.}, \textbf{43} (2021), A3033--A3054,
\newblock \urlprefix\url{https://doi.org/10.1137/20M1324727}.

\bibitem{gutk:1983}
\newblock M.~H. Gutknecht,
\newblock On complex rational approximation. {Part I}: The characterization
  problem,
\newblock in \emph{{Computational Aspects of Complex Analysis (H. Werneret
  at.,eds.). Dordrecht : Reidel}}, 1983,
\newblock 79--101.

\bibitem{heht:2023}
\newblock A.~Herremans, D.~Huybrechs and L.~N. Trefethen,
\newblock Resolution of singularities by rational functions,
\newblock \emph{SIAM J. Numer. Anal.}, \textbf{61} (2023), 2580--2600,
\newblock \urlprefix\url{https://doi.org/10.1137/23M1551821}.

\bibitem{hoch:2017}
\newblock A.~Hochman,
\newblock {FastAAA}: A fast rational-function fitter,
\newblock in \emph{2017 IEEE 26th Conference on Electrical Performance of
  Electronic Packaging and Systems (EPEPS)},
\newblock IEEE, 2017,
\newblock 1--3.

\bibitem{hoka:2020}
\newblock J.~M. Hokanson,
\newblock Multivariate rational approximation using a stabilized
  {S}anathanan-{K}oerner iteration, 2020,
\newblock \urlprefix\url{arXiv:2009.10803v1}.

\bibitem{isth:1993}
\newblock M.-P. Istace and J.-P. Thiran,
\newblock On computing best {C}hebyshev complex rational approximants,
\newblock \emph{Numer. Algorithms}, \textbf{5} (1993), 299--308.

\bibitem{kato:1966}
\newblock T.~Kato,
\newblock \emph{Perturbation Theory for Linear Operators},
\newblock Springer-Verlag Inc., New York, 1966.

\bibitem{laws:1961}
\newblock C.~L. Lawson,
\newblock \emph{Contributions to the Theory of Linear Least Maximum
  Approximations},
\newblock PhD thesis, UCLA, USA, 1961.

\bibitem{loeb:1957}
\newblock H.~L. Loeb,
\newblock \emph{On rational fraction approximations at discrete points},
\newblock Technical report, Convair Astronautics, 1957,
\newblock Math. Preprint \#9.

\bibitem{nase:2018}
\newblock Y.~Nakatsukasa, O.~S\`ete and L.~N. Trefethen,
\newblock The {AAA} algorithm for rational approximation,
\newblock \emph{SIAM J. Sci. Comput.}, \textbf{40} (2018), A1494--A1522.

\bibitem{natr:2020}
\newblock Y.~Nakatsukasa and L.~N. Trefethen,
\newblock An algorithm for real and complex rational minimax approximation,
\newblock \emph{SIAM J. Sci. Comput.}, \textbf{42} (2020), A3157--A3179.

\bibitem{nast:2023}
\newblock Y.~Nakatsukasa, O.~S\`ete and L.~N. Trefethen,
\newblock The first five years of the {AAA} algorithm,
\newblock \emph{Intl. Cong. Basic Sci.},
\newblock \urlprefix\url{https://arxiv.org/pdf/2312.03565},
\newblock To appear.

\bibitem{natr:2021}
\newblock Y.~Nakatsukasa and L.~N. Trefethen,
\newblock Reciprocal-log approximation and planar {PDE} solvers,
\newblock \emph{SIAM J. Numer. Anal.}, \textbf{59} (2021), 2801--2822.

\bibitem{rice:1969}
\newblock J.~R. Rice,
\newblock \emph{The approximation of functions},
\newblock Addison-Wesley publishing Company, 1969.

\bibitem{rius:1968}
\newblock J.~R. Rice and K.~H. Usow,
\newblock The {L}awson algorithm and extensions,
\newblock \emph{Math. Comp.}, \textbf{22} (1968), 118--127.

\bibitem{rish:1961}
\newblock T.~J. Rivlin and H.~S. Shapiro,
\newblock A unified approach to certain problems of approximation and
  minimization,
\newblock \emph{J. Soc. Indust. Appl. Math.}, \textbf{9} (1961), 670--699.

\bibitem{rutt:1985}
\newblock A.~Ruttan,
\newblock A characterization of best complex rational approximants in a
  fundamental case,
\newblock \emph{Constr. Approx.}, \textbf{1} (1985), 287--296.

\bibitem{sava:1977}
\newblock E.~B. Saff and R.~S. Varga,
\newblock Nonuniqueness of best approximating complex rational functions,
\newblock \emph{Bull. Amer. Math. Soc.}, \textbf{83} (1977), 375--377.

\bibitem{sako:1963}
\newblock C.~K. Sanathanan and J.~Koerner,
\newblock Transfer function synthesis as a ratio of two complex polynomials,
\newblock \emph{IEEE T. Automat. Contr.}, \textbf{8} (1963), 56--58,
\newblock \urlprefix\url{https://doi.org/10.1109/TAC.1963.1105517}.

\bibitem{this:1993}
\newblock J.-P. Thiran and M.-P. Istace,
\newblock Optimality and uniqueness conditions in complex rational {C}hebyshev
  approximation with examples,
\newblock \emph{Constr. Approx.}, \textbf{9} (1993), 83--103.

\bibitem{trms:2007}
\newblock D.~Traina, G.~Macchiarella and T.~K. Sarkar,
\newblock Robust formulations of the {C}auchy method suitable for microwave
  duplexers modeling,
\newblock \emph{IEEE Trans. Microw. Theory Tech.}, \textbf{55} (2007),
  974--982.

\bibitem{tref:2018}
\newblock L.~N. Trefethen,
\newblock Series solution of {L}aplace problems,
\newblock \emph{ANZIAM J.}, \textbf{60} (2018), 1--60.

\bibitem{tref:2019a}
\newblock L.~N. Trefethen,
\newblock \emph{{Approximation Theory and Approximation Practice, {E}xtended
  Edition}},
\newblock SIAM, 2019.

\bibitem{tref:2020}
\newblock L.~N. Trefethen,
\newblock Numerical conformal mapping with rational functions,
\newblock \emph{Comp. Meth. Funct. Thy.}, \textbf{20} (2020), 369--387.

\bibitem{tref:2023}
\newblock L.~N. Trefethen,
\newblock Numerical analytic continuation,
\newblock \emph{Japan J. Indust. Appl. Math}, \textbf{40} (2023), 1587--1636.

\bibitem{tref:2024a}
\newblock L.~N. Trefethen,
\newblock Polynomial and rational convergence rates for {L}aplace problems on
  planar domains,
\newblock \emph{Proc. Roy. Soc. A.}, \textbf{480} (2024), 20240178.

\bibitem{vaen:2021}
\newblock A.~Valera-Rivera and A.~E. Engin,
\newblock {AAA} algorithm for rational transfer function approximation with
  stable poles,
\newblock \emph{Lett. Electr. Compatibility Prac. and Applics.}, \textbf{3}
  (2021), 92--95.

\bibitem{widt:2022}
\newblock H.~Wilber, A.~Damle and A.~Townsend,
\newblock Data-driven algorithms for signal processing with trigonometric
  rational functions,
\newblock \emph{SIAM J. Sci. Comput.}, \textbf{44} (2022), C185--C209,
\newblock \urlprefix\url{https://doi.org/10.1137/21M1420277}.

\bibitem{will:1972}
\newblock J.~Williams,
\newblock Numerical {C}hebyshev approximation in the complex plane,
\newblock \emph{SIAM J. Numer. Anal.}, \textbf{9} (1972), 638--649.

\bibitem{will:1979}
\newblock J.~Williams,
\newblock Characterization and computation of rational {C}hebyshev
  approximations in the complex plane,
\newblock \emph{SIAM J. Numer. Anal.}, \textbf{16} (1979), 819--827.

\bibitem{wulb:1980}
\newblock D.~E. Wulbert,
\newblock On the characterization of complex rational approximations,
\newblock \emph{Illinois J. Math.}, \textbf{24} (1980), 140--155.

\bibitem{xuwt:2024}
\newblock Y.~Xue, S.~L. Watters and L.~N. Trefethen,
\newblock Computation of 2{D} {S}tokes flows via lightning and {AAA} rational
  approximation,
\newblock \emph{SIAM J. Sci. Comput.}, \textbf{46} (2024), A1214--A1234.

\bibitem{yazz:2023}
\newblock L.~Yang, L.-H. Zhang and Y.~Zhang,
\newblock The {L}q-weighted dual programming of the linear {C}hebyshev
  approximation and an interior-point method,
\newblock \emph{Adv. Comput. Math.}, 50:80 (2024).

\bibitem{zhsl:2023}
\newblock L.-H. Zhang, Y.~Su and R.-C. Li,
\newblock Accurate polynomial fitting and evaluation via {A}rnoldi,
\newblock \emph{Numerical Algebra, Control and Optimization}, \textbf{14}
  (2024), 526--546.

\bibitem{zhyy:2023}
\newblock L.-H. Zhang, L.~Yang, W.~H. Yang and Y.-N. Zhang,
\newblock A convex dual problem for the rational minimax approximation and
  {L}awson's iteration,
\newblock \emph{Math. Comp.},
\newblock {94(2025), 2457–2494.}
}
\end{thebibliography}
\end{document}